\def\2{C^{1,2}(\R\times\R^N)}
\def\to{\rightarrow}
\def\e{\varepsilon}
\def\R{\mathbb{R}}
\def\tilde{\widetilde}
\def\.{\cdot}
\newcommand{\be}{\begin{equation}}
\newcommand{\ee}{\end{equation}}
\newcommand{\baa}{\begin{array}}
\newcommand{\eaa}{\end{array}}
\newcommand{\ba}{\begin{eqnarray}}
\newcommand{\ea}{\end{eqnarray}}
\newtheorem{thm}{\bf Theorem}[section]
\newtheorem{lem}[thm]{\bf Lemma}
\newtheorem{prop}[thm]{\bf Proposition}
\newtheorem{cor}[thm]{\bf Corollary}
\theoremstyle{definition}
\newenvironment{formula}[1]{\begin{equation}\label{eq:#1}}
                       {\end{equation}\noindent}
\def\Fi#1{\begin{formula}{#1}}
\def\Ff{\end{formula}\noindent}
\begin{document}
\date{}
\title{\bf{Gaussian estimates for general parabolic operators in dimension $1$}}
\author{Gr\'egoire Nadin\footnote{Institut Denis Poisson, Universit\'e d'Orl\'eans, Universit\'e de Tours, CNRS, Orl\'eans, France ({\tt gregoire.nadin@cnrs.fr}).}}

\maketitle

\begin{abstract} 
We derive in this paper Gaussian estimates for a general parabolic equation $u_{t}-\big(a(x)u_{x}\big)_x= r(x)u$ over $\R$. Here $a$ and $r$ are only assumed to be bounded, measurable and $\mathrm{essinf}_\R a>0$. We first consider a canonical equation $\nu (x) \partial_{t}p - \partial_{x }\big( \nu (x)a(x)\partial_{x}p\big)+W\partial_{x}p=0$, with $W\in \R$, $\nu$ bounded and $\mathrm{essinf}_\R \nu>0$, for which we derive Gaussian estimates for the fundamental solution:
$$\forall t>0, x,y\in \R, \quad \displaystyle\frac{1}{Ct^{1/2}}e^{-C|T(x)-T(y)-Wt|^{2}/t} \leq P(t,x,y)\leq \frac{C}{t^{1/2}}e^{-|T(x)-T(y)-Wt|^{2}/Ct}.$$
Here, the function $T$ is a corrector, for which we are able to derive appropriate properties using one-dimensional arguments. We then show that any solution $u$ of the original equation could be divided by some generalized principal eigenfunction $\phi_\gamma$ so that $p:=u/\phi_\gamma$ satisfies a canonical equation. As a byproduct of our proof, we derive Nash type estimates, that is, Holder continuity in $x$, for the solutions of the canonical equation. 
\end{abstract}

\noindent {\bf Key-words:} Gaussian estimates, parabolic equation, corrector, generalized eigenfunctions, Nash type estimates. 

\smallskip

\noindent {\bf AMS classification:} 35B40; 35K10; 35K15

%\smallskip
%
%\noindent This work was partially supported by the French ANR project {\em
%Prefered}.\\
%The second author was partially supported by GNAMPA - INdAM (Italy) and the
%Fondazione CaRiPaRo Project ``Nonlinear Partial
%Differential Equations: models, analysis, and control-theoretic problems''.

%%%%%%%%%%%%%%%%%%%%%%%%%%%%%%%%%%%%%%%%%%%%%%%%%%%%%%%%%

%- \cite{NorrisStroock},
%- initial datum

\section{Introduction and main result}

\subsection{State of the art on Gaussian estimates}

In this paper, we consider the equation
\begin{equation} \label{eq:parab}\left\{\begin{array}{ll}
u_{t}-\big(a(x)u_{x}\big)_x= r(x)u \quad &\hbox{ for all } t>0, \ x\in \R,\\
u(0,x)=u_{0}(x) \quad &\hbox{ for all } x\in \R,\\
\end{array}\right. \end{equation}
where $u_{0}\in L^\infty (\R)$ is a compactly supported, nonnegative and non-null initial datum. 

We make the following hypotheses on the measurable functions $r$ and $a$ along all this article:
\begin{equation}\label{hyp}
\exists \mu>0, \quad |r(x)|\leq \mu, \quad \frac{1}{\mu}\leq a(x)\leq \mu \quad \hbox{ for a.e. } x\in \R. 
\end{equation}
Under these hypotheses, it is well-known that one can define a fundamental solution $U(t,x,y)$ of (\ref{eq:parab}), associated with the initial datum $\delta_y$. 

Of course a drift term $b(x)u_x$ could also be addressed, with $b$ bounded over $\R$. In that case, one uses the change of variables $v(t,x):=u(t,x)e^{\int_0^x \frac{b}{2a}}$ in order to reduce to an equation with no drift like (\ref{eq:parab}). 

\bigskip

When $r\equiv 0 $ and $a\equiv 1$, it is well-known that $U(t,x,y)\equiv \frac{1}{\sqrt{4\pi t}}e^{-|x-y|^{2}/4t}$. When $r\equiv 0$ and $a$ depends on $x$, 
it was proved by Aronson \cite{Aronson} using the Harnack inequality and by Fabes and Stroock \cite{FabesStroock} that there exists a constant $C>0$, which only depends on $\mu$, such that 
$$\frac{1}{Ct^{1/2}} e^{-C|x-y|^{2}/t} \leq U(t,x,y)\leq \frac{C}{t^{1/2}} e^{-|x-y|^{2}/Ct}.$$
Fabes and Stroock provided a direct proof of this estimate relying on methods developed by Nash \cite{Nash} to investigate the Holder continuity of the solutions. 

Several generalization of this result have been provided for bounded domains with Dirichlet \cite{ZhangDir}, Neumann \cite{Wang} or general \cite{Arendt, Daners} boundary conditions. We also refer to \cite{Kumagai} for Gaussian estimates on graphs, and to \cite{Grigor'yan97, Grigor'yan06} for Gaussian estimates on general manifolds. 

\bigskip

When $r\not\equiv 0$, but $r$ decays like $r(x)\simeq \frac{a}{1+|x|^b}$ at infinity, Zhang \cite{Zhang} derived Gaussian estimates (in the more general framework of Riemannian manifolds). 

If we do not impose a decay at infinity on $r$, then an exponential growth rate is expected in the estimate. Of course we could always easily derive from \cite{Aronson, FabesStroock} the estimate 
$$\frac{1}{Ct^{1/2}} e^{-C|x-y|^{2}/t- \|r\|_\infty t} \leq U(t,x,y)\leq \frac{C}{t^{1/2}} e^{-|x-y|^{2}/Ct+ \|r\|_\infty t}.$$
The aim of the present paper is to obtain more accurate estimates that encapsulate more precisely the exponential growth rate created by $r$.

In order to be more precise,  it is  convenient to investigate the canonical form of equation (\ref{eq:parab}), namely
\begin{equation} \label{eq:canonical}\left\{ \begin{array}{ll}
\nu (x) \partial_{t}p - \partial_{x }\big( \nu (x)a(x)\partial_{x}p\big)+W\partial_{x}p=0 &\hbox{ for all } t\in (0,\infty), x\in \R,\\
p(0,x) = p_0(x) &\hbox{ for all } x\in \R,\\
\end{array}\right. \end{equation} 
where $W$ is a constant and $\nu \in L^\infty (\R)$, $\mathrm{essinf}_\R \nu>0$. 
Such a canonical form arises when one divides the solution of  (\ref{eq:parab}) by some appropriate eigenfunction (see Section \ref{sec:UP} below), that is: $u(t,x)= \phi_\gamma (x) e^{\gamma t} p(t,x)$.

If $\nu -1$ admits a bounded primitive, then Norris \cite{Norris} proved that there exists a positive constant $C>0$ such that the fundamental solution $P(t,x,y)$ associated with equation (\ref{eq:canonical}) with initial datum $P(0,\cdot,y)=\delta_y /\nu$ satisfies
\begin{equation} \label{eq:Norris} \frac{1}{Ct^{1/2}} e^{-C|x-Wt-y|^{2}/t} \leq 
P(t,x,y)\leq \frac{C}{t^{1/2}} e^{-|x-Wt-y|^{2}/Ct}\end{equation}
and thus 
$$\frac{\phi_\gamma (x)}{Ct^{1/2}\phi_\gamma (y)} e^{-C|x-Wt-y|^{2}/t+\gamma t} \leq U(t,x,y)\leq \frac{C\phi_\gamma (x)}{t^{1/2}\phi_\gamma (y)} e^{-|x-Wt-y|^{2}/Ct+\gamma t}.$$
This is the type of estimates we want to derive when $r\not \equiv 0$, for a general dependence of $a$ and $r$ with respect to $x$.

The boundedness hypothesis on the primitive of $\nu-1$ is satisfied in particular when $a$ and $r$ are periodic, and in that case Norris even obtains more accurate estimates \cite{Norris}. But we could construct counter-examples for which it is not satisfied anymore, when $r$ is almost periodic for example. 

Lastly, let us mention a paper of Norris and Stroock \cite{NorrisStroock}, which addresses general $a$ and $r$. They obtain a lower bound and an upper bound that are not of the same type. It is not clear to us how to compare their result with the one derived in the present paper. 
%--------------------------------------------------

\subsection{Statement of the result for the canonical equation}\label{sec:statement}

We address in this section the canonical equation (\ref{eq:canonical}) and we make the following hypotheses on the measurable functions $\nu$ and $a$:

\begin{equation}\label{hyp:original}
\exists\mu>0 \hbox{ s.t. } \quad \frac{1}{\mu}\leq \nu(x)\leq \mu, \quad \frac{1}{\mu}\leq a(x)\leq \mu \quad \hbox{ for a.e. } x\in \R. 
\end{equation}

\begin{thm}\label{thm:heatkernel} Assume (\ref{hyp:original}). Let $P(t,x,y)$ be the fundamental solution of (\ref{eq:canonical}), associated with the initial datum $\delta_y /\nu$.  Then there exists a constant $C>0$ (only depending on $\mu$, not on $W$) such that 
\begin{equation} \label{eq:lbub}
\forall t>0, x,y\in \R, \quad \displaystyle\frac{1}{Ct^{1/2}}e^{-C|T(x)-T(y)-Wt|^{2}/t} \leq P(t,x,y)\leq \frac{C}{t^{1/2}}e^{-|T(x)-T(y)-Wt|^{2}/Ct}. \end{equation}
\end{thm}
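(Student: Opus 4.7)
My plan is to reduce (\ref{eq:canonical}) to a standard divergence-form parabolic equation through an appropriate choice of corrector, and then to apply the classical Nash--Aronson--Fabes--Stroock scheme in the new coordinate.

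\textbf{Corrector and reduction.} I would choose $T(x):=\int_0^x \nu(s)\,ds$, so that $T:\R\to\R$ is a bi-Lipschitz bijection with $1/\mu\le T'\le\mu$. Setting $z=T(x)$ and $\tilde p(t,z):=p(t,T^{-1}(z))$, a direct computation shows that $\tilde p$ satisfies
\[
\partial_t\tilde p-\partial_z\bigl(\tilde A(z)\,\partial_z\tilde p\bigr)+W\,\partial_z\tilde p=0,\qquad \tilde A(z):=(\nu^2 a)(T^{-1}(z)),
\]
where $\tilde A$ is measurable and satisfies $1/\mu^3\le \tilde A\le \mu^3$. The constant drift can then be removed by the Galilean substitution $\bar p(t,z):=\tilde p(t,z+Wt)$, after which $\bar p$ solves the pure divergence-form equation $\partial_t\bar p=\partial_z\bigl(\tilde A(z+Wt)\,\partial_z\bar p\bigr)$ with bounded measurable time-dependent coefficients, uniformly elliptic with constants depending only on $\mu$. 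The relation between fundamental solutions $P$ and $\bar P$ picks up only the bounded Jacobian factor $\nu(y)=T'(y)$, which is absorbed into the constant.

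\textbf{Upper and lower Gaussian bounds.} On the reduced equation I would follow the classical strategy. For the upper bound, I would first establish the on-diagonal ultracontractive estimate $\bar P(t,z,\zeta)\le C/\sqrt t$ by combining the one-dimensional Nash inequality $\|f\|_2^3\le C\|f'\|_2^2\|f\|_1$ with a differential inequality for $\|\bar p(t,\cdot)\|_2^2$ in the spirit of Nash. Second, to capture the off-diagonal Gaussian decay, I would apply the Davies exponential-weight trick: for a Lipschitz $\psi$ with $|\psi'|\le 1$ and a parameter $\alpha>0$, conjugating the semigroup by $e^{\alpha\psi}$ produces a new evolution that still satisfies the same Nash-type estimate, up to an extra zero-order correction of size $O(\alpha^2)$ and a multiplicative degradation $e^{C\alpha^2 t}$. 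Choosing $\psi$ as a smoothing of $|\cdot-\zeta|$ and optimising $\alpha\sim|z-\zeta|/t$ yields $\bar P(t,z,\zeta)\le (C/\sqrt t)\,e^{-|z-\zeta|^2/Ct}$. For the lower bound I would invoke Moser's parabolic Harnack inequality, iterated along a chain of $n\sim|z-\zeta|^2/t$ parabolic cylinders of comparable size connecting $\zeta$ to $z$ in time $t$, losing only a fixed multiplicative factor per step. Unwinding the two changes of variables then yields (\ref{eq:lbub}).

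\textbf{Main obstacle.} The delicate point is not the overall scheme but the uniformity of $C$ in $W$. The Galilean shift makes the diffusion coefficient $\tilde A(z+Wt)$ oscillate arbitrarily fast in $t$ as $|W|$ grows, so I must verify that the Nash inequality, the Davies perturbation and Moser's Harnack go through using only the uniform ellipticity and $L^\infty$ bounds $1/\mu^3\le \tilde A\le\mu^3$, with no hidden dependence on any modulus of continuity of the coefficients. The classical proofs do satisfy this requirement, but tracking every constant is where most of the work lies; this same uniformity is also what underlies the H\"older/Nash-continuity by-product announced in the abstract.
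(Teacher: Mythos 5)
Your overall scheme is sound and genuinely different from the paper's. The paper does not change variables: it keeps the equation as is, builds the corrector $T$ of Proposition \ref{prop:T} together with an adjoint corrector $\tilde T$, runs the Fabes--Stroock moment iteration with the exponential weight $e^{\alpha(\tilde T(x)-Wt)}$ (an exact solution of the adjoint equation) for the upper bound, and uses Nash's entropy functional plus a chaining argument for the lower bound. Your reduction --- straightening the measure by $z=\int_0^x\nu$ and then removing the constant drift by a Galilean shift, so that only a uniformly elliptic divergence-form equation with time-dependent measurable coefficients remains --- is more economical, since it lets you quote (or reprove verbatim) Aronson/Fabes--Stroock, whose constants indeed depend only on the ellipticity ratio and not on any temporal modulus of continuity; you correctly identify that this uniformity is the only thing to check. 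What this route gives up is the by-product structure of the paper (the weighted Nash iteration and the entropy argument are reused later for the H\"older estimate in the $(T,W)$ geometry).

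There is, however, one genuine gap: you prove the estimate for $N(x):=\int_0^x\nu$, whereas the theorem is stated for the corrector $T$ of Proposition \ref{prop:T}, which solves $-(a\nu T')'+WT'=W\nu$ and is \emph{not} equal to $N$ (that would require $(a\nu^2)'=0$). You must show the two exponents are comparable uniformly in $W$ and $t$, and this is not automatic: an additive error of size $\varepsilon$ in the quantity $|T(x)-T(y)-Wt|$ changes $e^{-|\cdot|^2/Ct}$ by an unbounded factor unless $\varepsilon\lesssim\sqrt t$. The bridge is as follows. Integrating the corrector equation from $0$ to $x$ gives $W\big(T(x)-N(x)\big)=(a\nu T')(x)-(a\nu T')(0)$, hence $\|T-N\|_\infty\le 4\mu^7/W$; trivially $T-N$ is also Lipschitz with constant $\mu^5+\mu$. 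Writing $A=T(x)-T(y)-Wt$, $B=N(x)-N(y)-Wt$ and using $|x-y|\le\mu(|B|+Wt)$ together with $\min\{C/W,\,CWt\}\le C\sqrt t$, one gets $|A-B|\le C(\mu)\big(|B|+\sqrt t\big)$ and symmetrically, which is exactly the additive-$O(\sqrt t)$ comparability needed (this is the same device the paper uses in Lemma \ref{lem:compfg2} to pass between $T$ and $\tilde T$). Without this step your argument establishes a correct Gaussian bound, but not the one stated. Two smaller points to make explicit: the normalization of the fundamental solution (whether $\delta_y$ or $\delta_y/\nu(y)$) only costs a factor in $[\mu^{-1},\mu]$; and the degenerate case $W=0$ should be treated by your argument directly, since the paper's construction gives $T\equiv0$ there and the statement must then be read with $T$ replaced by a nondegenerate solution of the corrector ODE.
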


The function $T$ is defined by the following Proposition.

\begin{prop}\label{prop:T}
There exists a unique solution $T$ of 
\begin{equation} \label{eq:T}
-\big( a(x)\nu(x)T'\big)'+WT'=W\nu(x) \hbox{ in } \R, \quad T(0)=0
\end{equation}
such that $T(x)/|x|$ is bounded over $\R$. Moreover, one has $m\leq T'(x)\leq M$ for all $x\in \R$, for $m=1/\mu^5$ and $M=\mu^5$. 
\end{prop}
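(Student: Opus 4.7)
The plan is to reduce the second-order ODE to a first-order linear one via the substitution $f := a\nu T'$, solve it explicitly by an integrating factor, and then fix $f(0)$ uniquely so that $f$ remains bounded on all of $\R$.

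Since $(a\nu T')' = WT' - W\nu$ and $T' = f/(a\nu)$, the function $f$ satisfies
$f'(x) = \frac{W}{a(x)\nu(x)}f(x) - W\nu(x).$
Setting $\alpha(x) := \int_0^x \frac{W}{a(s)\nu(s)}\,ds$, the general solution reads
$$f(x) = e^{\alpha(x)}f(0) - W e^{\alpha(x)}\int_0^x \nu(s) e^{-\alpha(s)}\,ds.$$
Assume without loss of generality $W>0$, so that $\alpha$ is strictly increasing with $\alpha(x)\geq Wx/\mu^2$ for $x\geq 0$. For $f$ to remain bounded as $x\to+\infty$, one is forced to take
$$f(0) = W\int_0^\infty \nu(s) e^{-\alpha(s)}\,ds,$$
the integral converging by exponential decay. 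Plugging this back gives the single formula
$$f(x) = W\int_x^\infty \nu(s) e^{\alpha(x)-\alpha(s)}\,ds\qquad\forall x\in\R.$$
The change of variables $u:=\alpha(s)-\alpha(x)$, $du=(W/(a\nu))\,ds$, yields
$$f(x) = \int_0^\infty a(s(u))\nu(s(u))^2 e^{-u}\,du \in [\mu^{-3},\mu^3],$$
uniformly in $x\in\R$ and $W>0$. Hence $T'(x)=f(x)/(a(x)\nu(x)) \in [\mu^{-5},\mu^5]$; the case $W<0$ is symmetric, with the integral run to $-\infty$. Setting $T(x) := \int_0^x T'(s)\,ds$ gives $T(0)=0$ and $|T(x)|\leq\mu^5|x|$, establishing existence and the pointwise bounds.

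For uniqueness, any two admissible solutions $T_1,T_2$ yield $D := T_1-T_2$ with $D(0)=0$ solving the homogeneous equation $-(a\nu D')' + WD' = 0$. Integrating once from $0$ to $x$, $g := a\nu D'$ satisfies $g(x) = g(0) + WD(x)$, and the homogeneous ODE also forces $g(x) = g(0)e^{\alpha(x)}$. Hence $WD(x) = g(0)(e^{\alpha(x)}-1)$, and since $D(x)/|x|$ is bounded while $e^{\alpha(x)}$ grows exponentially at one end for $W\neq 0$, we must have $g(0)=0$, so $D \equiv 0$. The main subtlety in the argument is that a single constant $f(0)$ must simultaneously control $f$ at both $\pm\infty$ and deliver bounds that are independent of $W$; the explicit change-of-variables formula above is exactly what packages this information, turning the integrating-factor solution into an $L^\infty$ estimate of the correct form $[\mu^{-3},\mu^3]$.
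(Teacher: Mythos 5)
Your proof is correct, and it takes a genuinely different route from the paper. The paper constructs $T$ as a monotone limit of solutions $T_R$ of truncated Dirichlet problems on $(0,R)$, and obtains the bound $T'\leq \mu^5$ by a maximum-principle argument at a point where $a\nu T_R'$ attains its maximum; uniqueness is then proved, as in your argument, by showing that $a\nu(T-S)'$ would otherwise grow exponentially. You instead integrate the equation explicitly: reducing to the first-order linear ODE for $f=a\nu T'$, selecting the unique non-exploding value of $f(0)$, and — the nice step — performing the change of variables $u=\alpha(s)-\alpha(x)$, which turns $f(x)$ into $\int_0^\infty a\nu^2\, e^{-u}\,du$ and makes the two-sided bound $f\in[\mu^{-3},\mu^3]$, and its independence of $W$, completely transparent. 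Your route is shorter and avoids the limit procedure and the slightly delicate pointwise argument at the maximum of $a\nu T_R'$ (which requires $a\nu T_R'$ to be classically differentiable there); its only cost is that it is tied to the explicit one-dimensional quadrature, but the paper's method is equally one-dimensional, so nothing is lost. Two minor remarks: your phrase ``for $f$ to remain bounded one is forced to take\dots'' is strictly speaking an argument about $T(x)/|x|$ being bounded (exponential growth of $f$ with eventually constant sign forces exponential growth of $T$), and this is anyway superseded by your separate, correct uniqueness argument; and the degenerate case $W=0$, where your formula breaks down and uniqueness genuinely fails (any $T'=c/(a\nu)$ is admissible), is treated no better in the paper, whose construction gives $T\equiv 0$ there, contradicting the stated lower bound $T'\geq 1/\mu^5$.
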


Assume that $\nu$ and $r$ are $1-$periodic in $x$, and that $\int_0^1 \nu = 1$ by rescaling. Then there exists a unique periodic solution $\chi=\chi(x)$ of 
$$-\big( a(x)\nu(x)\chi'\big)'+W\chi'=W\big(\nu(x)-1\big) +\big( a(x)\nu(x)\big)'\hbox{ in } \R, \quad \chi(0)=0$$
since the right-hand is of average $0$ (such a quantity is introduced in \cite{HNRR2} for example). One has $T(x)=\chi(x)+x$, with $\chi$ bounded. Hence, even if it means increasing $C$, one can recover Norris' estimate (\ref{eq:Norris}) with Theorem \ref{thm:heatkernel}.

For more general dependences in $x$, say $\nu$ and $a$ almost periodic for example, $\nu$ having average $1$, one can still define $\chi(x):= T(x)-x$, but this quantity is not bounded in $x$ in general. 

We could identify $\chi$ as a corrector (see \cite{Jhikov}) and $a_{hom} := \lim _{x\to +\infty}\frac{1}{x}\int_0^x \nu (y)a(y)T'(y)^2dy$ is an effective diffusivity in the almost periodic or random stationary ergodic frameworks. We were not able to push further this observation, and hope to be able to derive more accurate estimates in these frameworks in a future work. 

This illustrates why we need to introduce function $T$ in order, somehow, to quantify the fluctuations in the estimate created by the heterogeneity. 

The proof of Theorem \ref{thm:heatkernel} follows the same steps as in \cite{FabesStroock} (for $W=0$) and \cite{Norris} (for periodic coefficients). However, we need to adapt these proofs in order to take into account the heterogeneity through the corrector $T$, and to check that this corrector satisfies appropriate properties. These are the main difficulties in this paper. 

It would be interesting to extend these results to multi-dimensional frameworks. The main difficulty would be the introduction of an appropriate corrector $T$. The methods we use in our proof to derive the properties of $T$ are one-dimensional ones. We thus leave this for future works. 

\bigskip

As a byproduct of our result of independent interest, we derive, as in \cite{FabesStroock}, a Nash estimate for the solutions of the canonical equation.

\begin{thm}\label{thm:Nash} Assume (\ref{hyp:original}) and let $p$  be   a solution of (\ref{eq:canonical}), with $p(0,\cdot)\in L^1 (\R)$. 
For each $\delta\in (0,1)$, there exist $C=C(\mu,\delta)$ and $\beta=\beta(\mu,\delta)\in (0,1)$ such that for all $R>0$ and $(s,\xi)\in (R^2,\infty) \times \R$:
$$|p(t,x)-p(t',x')|\leq C \|p\|_{L^\infty ((s-R^2,s)\times B(\xi,R))}\Big( \frac{\max \{\sqrt{|t-t'|}, |T(x)-T(x')-W(t-t')|\}}{R}\Big)^\beta$$
for all $(t,x), (t',x')\in (0,\infty)\times \R$, such that  
$$|T(x)-T(\xi)-Wt|<(1-\delta)R,  \quad |T(x')-T(\xi)-Wt'|<(1-\delta)R, $$
$$\hbox{ and } \quad s-(1-\delta^2)R^2<t,t'\leq s.$$

In particular, one has (for another constant depending on $\mu$ and $\delta$ that we still denote $C$), for all $t>0$, $x,x'\in \R$:
\begin{equation} \label{eq:osc}|p(t,x)-p(t,x')|\leq \frac{C}{t^{\frac{1+\beta}{2}}}\|p(0,\cdot)\|_{L^1 (\R)} |x-x'|^\beta.\end{equation}
\end{thm}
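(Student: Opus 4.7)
The plan is to reduce the problem to a standard parabolic H\"older estimate via the change of variables $y=T(x)$, which linearizes the corrector. Since $T$ is bi-Lipschitz by Proposition \ref{prop:T} (with $m\le T'\le M$), the map $x\mapsto y=T(x)$ is a global diffeomorphism of $\R$. Setting $q(t,y):=p(t,T^{-1}(y))$ and transforming the weak formulation of (\ref{eq:canonical}) by the change of variable $dx=dy/T'$, one finds that $q$ satisfies
\begin{equation*}
\tilde\nu(y)\,q_{t} - \partial_{y}\bigl(\tilde\nu(y)\tilde a(y)\,q_{y}\bigr) + W\,q_{y} = 0
\end{equation*}
with $\tilde\nu(y)=\nu(x)/T'(x)$ and $\tilde a(y)=a(x)(T'(x))^{2}$ evaluated at $x=T^{-1}(y)$; by Proposition \ref{prop:T} and (\ref{hyp:original}) both lie between positive constants depending only on $\mu$. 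A direct computation using $(a\nu T')'=W(T'-\nu)$ from (\ref{eq:T}) shows that the corrector for this transformed equation is $\tilde T(y)\equiv y$, so the natural heat-kernel distance for $q$ is the Euclidean one $|y-y'-W(t-t')|$.

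The transformed equation is uniformly parabolic in divergence form with bounded measurable coefficients and a bounded constant drift, so the local H\"older estimate for $q$ is a direct byproduct of the Nash--Fabes-Stroock energy and Moser iteration scheme used in the proof of Theorem \ref{thm:heatkernel}: the oscillation decay of solutions to such equations is an intermediate step on the way to the Gaussian bounds. This yields, for each $\delta\in(0,1)$, constants $C=C(\mu,\delta)$ and $\beta=\beta(\mu,\delta)\in(0,1)$ such that
\begin{equation*}
|q(t,y)-q(t',y')| \le C\,\|q\|_{L^{\infty}((s-R^{2},s)\times(\eta-R,\eta+R))} \Bigl(\tfrac{\max\{\sqrt{|t-t'|},\,|y-y'-W(t-t')|\}}{R}\Bigr)^{\beta}
\end{equation*}
for appropriate $\eta,R$ and admissible points analogous to those in the statement. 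Substituting back $y=T(x)$ and $\eta=T(\xi)$, and using the bi-Lipschitz bounds on $T$ to compare the $y$-interval with the $x$-ball $B(\xi,R)$ from the statement (up to a harmless rescaling of $R$ absorbed in $C$), one recovers the first inequality of Theorem \ref{thm:Nash}.

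For the pointwise bound (\ref{eq:osc}) one writes $p(t,x)=\int_{\R}P(t,x,z)\,p(0,z)\,dz$; the upper Gaussian bound of Theorem \ref{thm:heatkernel} gives $P(t,x,z)\le C/\sqrt{t}$, hence $\|p(t,\cdot)\|_{L^{\infty}(\R)}\le C\,t^{-1/2}\|p(0,\cdot)\|_{L^{1}(\R)}$. If $|x-x'|\ge \sqrt{t}/M$, the trivial bound $2\|p(t,\cdot)\|_{\infty}$ already dominates the desired right-hand side; otherwise one applies the first part of the theorem with $s=t$, $R\sim\sqrt{t}$ and $\xi$ chosen so that $T(\xi)=T(x)-Wt$ (so that the two admissibility conditions hold), and uses $|T(x)-T(x')|\le M|x-x'|$ to extract the factor $|x-x'|^{\beta}\,t^{-(1+\beta)/2}$. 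The main obstacle is the clean derivation of the transformed equation and the verification of the identity $\tilde T\equiv y$, together with making precise which form of the classical divergence-form H\"older estimate is invoked in the $y$-coordinate--equivalently, pinpointing the oscillation-decay step in the Moser/Nash iteration underlying the proof of Theorem \ref{thm:heatkernel} that already delivers it.
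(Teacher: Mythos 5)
There is a genuine gap in the main (first) part. Your change of variables $y=T(x)$ is fine, and your verification that the transformed corrector is $\tilde T(y)\equiv y$ is correct (it reduces exactly to \eqref{eq:T}). But you then dispose of the H\"older estimate by asserting it is ``an intermediate step'' of the Nash/Fabes--Stroock scheme used for Theorem \ref{thm:heatkernel}. It is not: in that scheme (and in this paper) the logical order is the reverse. The Gaussian bounds are proved first, by moment/entropy arguments that never establish any oscillation decay; the H\"older continuity is then \emph{deduced} from the Gaussian \emph{lower} bound. Concretely, the paper introduces the moving cylinders $T(\xi,R,s)=\{|T(x)-T(\xi)-Wt|<R,\ t\ge s\}$, proves a lower bound for the Dirichlet fundamental solution $P^{(\xi,R)}$ in a smaller cylinder (Lemma \ref{lem:PxiR}, by subtracting the boundary contributions controlled by the Gaussian upper bound and re-running the chaining argument), derives from it the oscillation decay $Osc(p;s,\xi,\delta R)\le\rho\,Osc(p;s,\xi,R)$ (Lemma \ref{lem:osc}), and iterates as in Theorem 5.3 of Fabes--Stroock. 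None of this is present in your argument, and simply citing ``the classical divergence-form H\"older estimate'' does not replace it.

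The reason this matters, and not merely as a matter of attribution, is the uniformity in $W$. After your change of variables the equation still carries the drift $W\partial_y q$, and $W$ is an arbitrary real number, not controlled by $\mu$; calling it ``a bounded constant drift'' hides the fact that any off-the-shelf local H\"older estimate produces constants depending on $WR$, which is unbounded here. Passing to the Galilean frame $z=y-Wt$ does not kill the drift either, since $\tilde\nu=\nu/T'\not\equiv 1$ in general (one is left with a drift $W(1-\tilde\nu)\partial_z$, still of order $W$). The whole point of the theorem is an estimate along the characteristics $T(x)-Wt=\mathrm{const}$ with constants depending only on $\mu$ and $\delta$, and the only uniform-in-$W$ input available is the Gaussian bound of Theorem \ref{thm:heatkernel} itself --- which is exactly why the paper routes the proof through $P^{(\xi,R)}$ on the moving cylinders. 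Your treatment of the second assertion \eqref{eq:osc} (the $L^1$--$L^\infty$ bound plus the choice $s=t$, $R\sim\sqrt{t}$, $\xi$ with $T(\xi)=T(x)-Wt$, and the trivial case $|T(x)-T(x')|\gtrsim\sqrt{t}$) does match the paper's argument and is fine, but it presupposes the first part.
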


We could also derive the following $L^1-L^\infty$ continuity estimate (for which we do not provide a proof since it is immediate from Theorem \ref{thm:heatkernel}).

\begin{prop}\label{prop:lplq}Assume (\ref{hyp:original}) and let $p, q$  be   two solutions of (\ref{eq:canonical}), with $$p(0,\cdot)-q(0,\cdot)\in L^1 (\R).$$
Then there exists a constant $C=C(\mu)$ (independent of $W$) such that for all $t>0$:
$$\|p(t,\cdot)-q(t,\cdot)\|_{L^\infty (\R)}\leq \frac{C}{\sqrt{t}}\|p(0,\cdot)-q(0,\cdot)\|_{L^1 (\R)}.$$
\end{prop}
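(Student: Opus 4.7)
The plan is to reduce everything to a single Gaussian upper bound applied to the difference. Set $w:=p-q$. By linearity of \eqref{eq:canonical}, $w$ is again a solution of the canonical equation, with initial datum $w_0 := p(0,\cdot)-q(0,\cdot) \in L^1(\R)$. Since $w_0\in L^1(\R)$, standard theory for \eqref{eq:canonical} gives the integral representation
\begin{equation*}
w(t,x)=\int_{\R} P(t,x,y)\, w_0(y)\, \nu(y)\, dy,
\end{equation*}
where $P$ is the fundamental solution of Theorem \ref{thm:heatkernel} (the weight $\nu(y)$ is the natural one for this canonical form; any other convention differs only by the factor $\nu$, whose boundedness is controlled by $\mu$).

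Next I would invoke the upper Gaussian bound from Theorem \ref{thm:heatkernel} and throw away the exponential factor, which is always at most $1$. This gives $P(t,x,y)\le C/\sqrt{t}$ uniformly in $x,y$, with $C=C(\mu)$ independent of $W$. Inserting this into the representation formula and using $\nu\le \mu$ from \eqref{hyp:original} yields
\begin{equation*}
|w(t,x)| \;\le\; \frac{C}{\sqrt{t}}\int_{\R}|w_0(y)|\,\nu(y)\,dy \;\le\; \frac{C\mu}{\sqrt{t}}\,\|w_0\|_{L^1(\R)}
\end{equation*}
for every $x\in\R$. Taking the supremum in $x$ and relabeling the constant gives the desired $L^1$-$L^\infty$ estimate.

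There is no genuine obstacle beyond justifying the representation formula for general $L^1$ initial data; this is routine, for instance by approximating $w_0$ by compactly supported smooth data for which the formula follows from the defining property of $P(t,x,y)$, and then passing to the limit thanks to the uniform upper bound just derived (which also furnishes $L^1$-contractivity of the approximations). The key analytic input is purely the Gaussian upper bound of Theorem \ref{thm:heatkernel}; the corrector $T$ plays no role here since the exponential factor is discarded.
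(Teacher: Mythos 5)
Your argument is correct and is exactly what the paper has in mind: the paper explicitly omits the proof, stating that the estimate is immediate from Theorem \ref{thm:heatkernel}, and your reduction (apply the representation formula to $w=p-q$, discard the Gaussian exponential to get $P(t,x,y)\le C/\sqrt{t}$, and integrate against $|w_0|\nu$) is that immediate deduction, carried out with the right care about the weight $\nu$ and the $L^1$ initial data.
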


%--------------------------------------------------

\subsection{Statement of the result for the original equation}\label{sec:statementoriginal}

Before stating our result for the original equation, we need to introduce the eigenelements that will unable us to reduce to a canonical equation. 
Let
\begin{equation} \underline{\gamma} := \sup_{\varphi \in H^{1}(\R)}\displaystyle \frac{\int_{\R}\big( r(x)\varphi^{2}-a(x)(\varphi')^{2}\big)dx}{\int_{\R}\varphi^{2}dx}. \end{equation}

The bounds (\ref{hyp}) on $a$ and $r$ ensure that $ \underline{\gamma}$ is well-defined and finite.

For $\gamma > \underline{\gamma}$, we know from \cite{Freidlin2, Noleninv} that
\be \label{eq:phigamma}
\big(a(x)\phi_{x}\big)_x + (r(x) - \gamma) \phi = 0, \quad x \in \R, \quad \phi>0, \quad \phi (0)=1, \quad \lim_{x\to +\infty}\phi (x)=0 
\ee
admits a unique solution $\phi=\phi_{\gamma}$. The existence and uniqueness of $\phi_\gamma$ relies on one-dimensional arguments and it is not clear how to define such a solution in $\R^N$. We define similarly a unique solution $\tilde{\phi}_{\gamma}$ with $\lim_{x\to -\infty}\tilde{\phi}_{\gamma} (x)=0 $ instead of $\lim_{x\to +\infty}\phi_{\gamma}(x)=0$.

\bigskip

The main result of this paper is the following.

\begin{thm}\label{thm:princ} Assume (\ref{hyp}) and let $\gamma>\underline{\gamma}$. Let $U(t,x,y)$  be   the fundamental solution of (\ref{eq:parab}), associated with the initial datum $\delta_y$.  Then there exists two constants $C>0$ and $W\in \R$ (depending on $\gamma$) and a function $T_\gamma=T_\gamma (x)=-W\dot{\phi}_\gamma (x)/\phi_\gamma (x)$, where $\dot{\phi}_\gamma$ is defined in Lemma \ref{lem:wgamma}, such that for all $t>0, x,y\in \R$:
$$
\displaystyle\frac{1}{Ct^{1/2}}\frac{\phi_\gamma (x)}{\phi_\gamma (y)} e^{-C|T_\gamma(x)-T_\gamma(y)-Wt|^{2}/t+\gamma t} \leq U(t,x,y)\leq \frac{C}{t^{1/2}}\frac{\phi_\gamma (x)}{\phi_\gamma (y)} e^{-|T_\gamma(x)-T_\gamma(y)-Wt|^{2}/Ct+\gamma t}. $$
\end{thm}

Indeed, the $\dot{\phi}_\gamma$ refers to the derivative of $\phi_\gamma$with respect to $\gamma$. 

When $a\equiv 1$ and $r\equiv 0$, one has $\phi_\gamma (x)=e^{-\sqrt{\gamma}x}$, $W=2\sqrt{\gamma}$, and $T_\gamma (x) = x$. Hence, 
$U(t,x,y)\equiv \frac{1}{\sqrt{4\pi t}}e^{-|x-y|^{2}/4t}$ could also be written 
$$U(t,x,y)\equiv  \frac{1}{\sqrt{4\pi t}} e^{-\sqrt{\gamma}(x-y)-\frac{1}{4t}|x-y-2\sqrt{\gamma}t|^{2}+\gamma t}= \frac{1}{\sqrt{4\pi t}}\frac{\phi_\gamma (x)}{\phi_\gamma (y)} e^{-|T_\gamma(x)-T_\gamma(y)-W t|^{2}/4t+\gamma t}$$
which is consistent with Theorem \ref{thm:princ} (even if our result is less accurate in this case since we need to introduce a constant $C>0$). In this case the estimate does not depend on $\gamma$ after simplification. 

When $a$ and $r$ are periodic with respect to $x$, then one can prove that $x-T_\gamma (x)$ stays bounded with respect to $x$. Hence, we recover Norris' result \cite{Norris} in that case. 

It is not possible in general to replace $T_\gamma (x)$ by $x$. If $r=r(x,\omega)$ is random stationary ergodic with respect to $(x,\omega)$, then we expect the fluctuations of $T_\gamma (x)$ around $x$ to be of order $\sqrt{x\ln\ln x}$. We leave this particular case for a future work. 

When $r\equiv 0$, one can prove that $\underline{\gamma}=0$, that $\phi_\gamma (x)\to 1$ as $\gamma\to \underline{\gamma}$ locally in $x$ and that $T_\gamma (x)$ converges to the constant $x$ as $\gamma\to \underline{\gamma}$. Hence, we could recover Aronson's \cite{Aronson} original result as well.
One needs to be careful however since the constant $C$ also depends on $\gamma$. 

We have one degree of freedom in this estimate, which is $\gamma>\underline{\gamma}$. It would thus be tempting to try to optimize this inequality with respect to $\gamma$. Bu the reader should keep in mind that $C$, $W$ and $T_\gamma$ depend on $\gamma$, and that it may happen that $C\to +\infty$ as $\gamma\to \underline{\gamma}$ or $+\infty$. We were thus unable to carry out such an optimization. One should thus choose $\gamma$ depending on the type of behavior of $U$ one wants to quantify.

%--------------------------------------------------

\subsection{Estimates for Green functions of the canonical elliptic equations}\label{sec:statementGreen}

We now consider the Green solution $G_\lambda=G_\lambda(x,y)$ solutions of the canonical elliptic equation
\begin{equation} \label{eq:ellW}
-\big( a(x)\nu (x) G_x\big)_x +W G_x+\lambda W^2 \nu (x) G = \nu (x)\delta_y  \hbox{ in } \R,
\end{equation}
for some $\lambda>0$. 

\begin{prop}
Assume (\ref{hyp:original}). Then there exists a constant $C=C(\mu)>0$ independent of $W$, such that 
for all $x,y\in \R$, 
$$ \frac{1}{W C\sqrt{\lambda+C}} e^{-2W\sqrt{C}\big( \sqrt{\lambda +C}-\sqrt{C}\big)\big( T(x)-T(y)\big)}\leq G_\lambda(x,y)\leq 
 \frac{C}{W \sqrt{\lambda C+1}} e^{-\frac{2W}{C}\big( \sqrt{\lambda C+1}-1\big)\big( T(x)-T(y)\big)}.$$
\end{prop}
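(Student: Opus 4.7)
The natural route is via the Laplace transform in time of the parabolic fundamental solution $P$ provided by Theorem \ref{thm:heatkernel}. The starting point is the representation
$$G_\lambda(x,y) = \int_0^\infty e^{-\lambda W^2 t}\,P(t,x,y)\,dt.$$
To verify it, I would apply the elliptic operator $-\partial_x(a\nu\partial_x\cdot)+W\partial_x+\lambda W^2\nu$ under the integral sign. Using the canonical parabolic equation \eqref{eq:canonical} to replace the spatial part by $-\nu\partial_t P$ and then integrating by parts in $t$, the source $\nu(x)P(0,x,y)=\nu\delta_y$ pops out of the lower boundary, while the term at $t=+\infty$ vanishes thanks to the Gaussian upper bound. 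Uniqueness of $G_\lambda$ in a class of solutions with appropriate exponential decay at infinity identifies the integral with the Green function.

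Setting $Z:=T(x)-T(y)$ and plugging in the upper bound of Theorem \ref{thm:heatkernel} gives
$$G_\lambda(x,y)\leq C\,e^{2ZW/C}\int_0^\infty t^{-1/2}\exp\Bigl(-W^2\bigl(\lambda+\tfrac{1}{C}\bigr)\,t-\frac{Z^2}{Ct}\Bigr)\,dt,$$
after expanding $(Z-Wt)^2/(Ct)$ and separating the constant factor. I would then use the classical identity
$$\int_0^\infty t^{-1/2}e^{-\alpha t-\beta/t}\,dt=\sqrt{\pi/\alpha}\;e^{-2\sqrt{\alpha\beta}}\qquad(\alpha,\beta>0)$$
with $\alpha=W^2(\lambda+1/C)$ and $\beta=Z^2/C$. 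The prefactor collapses to $\sqrt{\pi C}/\bigl(W\sqrt{\lambda C+1}\bigr)$, and for $Z\geq 0$ consolidating the exponents produces $-\tfrac{2W}{C}(\sqrt{\lambda C+1}-1)\,Z$; after renaming the constant this is exactly the claimed upper bound. The lower bound is proved in the same way, starting from the lower Gaussian estimate and using the identity with $\alpha=W^2(\lambda+C)$ and $\beta=CZ^2$, which delivers the prefactor $\sqrt{\pi}/\bigl(CW\sqrt{\lambda+C}\bigr)$ and the exponent $-2W\sqrt{C}\,(\sqrt{\lambda+C}-\sqrt{C})\,Z$.

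The main technical obstacle is the rigorous justification of the Laplace representation: uniform convergence of the integral in $(x,y)$ off the diagonal, the legitimacy of differentiating under the integral sign, and the conversion of the distributional initial condition $P(0,\cdot,y)=\delta_y$ into the source $\nu\delta_y$. All of these are routine given the Gaussian upper bound of Theorem \ref{thm:heatkernel} together with a uniqueness statement for $G_\lambda$ in an exponentially decaying class. One must also mind signs: the bounds are tight in the direction $T(x)\geq T(y)$, which corresponds to the slow-decay side of the advection--diffusion operator; on the opposite side the integrand decays strictly faster in $|Z|$, so the upper bound continues to hold a fortiori, whereas the lower bound is really only of interest when $T(x)\geq T(y)$.
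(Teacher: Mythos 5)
Your proposal is correct and follows exactly the paper's route: the Laplace-transform representation $G_\lambda(x,y)=\int_0^\infty e^{-\lambda W^2 t}P(t,x,y)\,dt$ combined with the Gaussian bounds of Theorem \ref{thm:heatkernel} and the classical identity $\int_0^\infty t^{-1/2}e^{-\alpha t-\beta/t}\,dt=\sqrt{\pi/\alpha}\,e^{-2\sqrt{\alpha\beta}}$, with the constants matching the statement. Your remark that the consolidated exponent is only valid as written for $T(x)\geq T(y)$ is a legitimate observation (the paper's identity, as stated ``for all $X\in\R$,'' in fact requires $X\geq 0$), and your handling of the two signs is the careful version of what the paper leaves implicit.
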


This immediately follows from the classical identity $G_\lambda (x,y)=\int_0^\infty e^{-\lambda W^2 t}P(t,x,y)dt$ and the following computation, available for all $a,b>0$ and $X\in \R$:
$$\int_0^\infty e^{-a t}\frac{e^{-b|X-t|^2/t}}{\sqrt{t}}dt=\sqrt{\frac{\pi}{a+b}}e^{-2\sqrt{b}(\sqrt{a+b}-\sqrt{b})X}.$$
See for example \cite{Bages} for a proof of this identity. 
%
%\bigskip
%
%Consider now the Green solution $H_\gamma=H_\gamma(x,y)$ solutions of the elliptic equation
%\begin{equation} \label{eq:ellW}
%-\big( a(x) H_x\big)_x -r(x) H +\gamma G = \delta_y  \hbox{ in } \R,
%\end{equation}
%for some $\lambda>0$. 
%
%\begin{prop}
%Assume (\ref{hyp}). Then for all $\gamma>\underline{\gamma}$, there exists a constant $C=C(\mu, \gamma)>0$, such that 
%for all $x,y\in \R$, 
%$$\begin{array}{l} \displaystyle\frac{\phi_\gamma (x)}{W C\sqrt{\lambda+C}\phi_\gamma (y)} e^{-2W\sqrt{C}\big( \sqrt{\lambda +C}-\sqrt{C}\big)\big( T(x)-T(y)\big)}\\
%\leq G_\lambda(x,y)\leq 
%\displaystyle \frac{C\phi_\gamma (x)}{W \sqrt{\lambda C+1}\phi_\gamma (y)} e^{-\frac{2W}{C}\big( \sqrt{\lambda C+1}-1\big)\big( T(x)-T(y)\big)}.\\ \end{array}$$
%\end{prop}

\subsubsection*{Acknowledgements}
G. N. was partially supported by the ANR project ReaCh ({\it R\'eaction-diffusion : nouveaux
Challenges}).

%%%%%%%%%%%%%%%%%%%%%%%%%%%%%%%%%%

\section{Properties of the function $T$}

In this section we first prove the existence and the properties of $T$ and of an adjoint function $\tilde{T}$. Then we introduce a flow $X$ associated with $T$ and the particular solution $f(t,x)=T(x)-Wt$ of the canonical equation, that will be a crucial tool in the proof of Theorem \ref{thm:heatkernel}. 

In the rest of the paper, we will assume that $W\geq 0$. The case $W\leq 0$ could be addressed with the change of variable $x\mapsto -x$.

\subsection{Proof of Proposition \ref{prop:T}.}

\begin{proof}[Proof of Proposition \ref{prop:T}.]
We define $T_R$ the unique solution of 
$$-\big( a(x)\nu(x)T_R'\big)'+WT_R'=\nu(x)W \hbox{ in } (0,R), \quad T_R(0)=0, \ T_R(R)=0.$$
If $W=0$, then $T_R\equiv 0$. If $W>0$, then one can easily prove that $T_R>0$ over $(0,R)$ and it follows that $R\mapsto T_R(x)$ is increasing for all $x>0$ (and $R>x$). Moreover, $T_R$ is uniformly bounded in $W^{2,\infty}_{loc}$ by elliptic regularity estimates. We could thus define $T(x):=\lim_{R\to +\infty} T_R(x)$. 

Let $\overline{x}\in [0,R]$ such that $(a\nu T_R')(\overline{x})=\max_{[0,R]}a\nu T_R'$. We want to prove that that $(\nu T_R')(\overline{x})\leq \mu^3$. If $(\nu T_R')(\overline{x})\leq 0$ we are done. Assume that $(\nu T_R')(\overline{x})>0$. We recall that $\mu\geq a,\nu \geq 1/\mu$. 
As $T_R>0$ over $(0,R)$ and $T_R(R)=0$, one has $T_R'(R)\leq 0$ and thus $\overline{x}<R$. For $\e>0$ small enough, one thus gets $(a\nu T_R')(\overline{x}+\e)\leq (a\nu T_R')(\overline{x})$ and thus $(a\nu T_R')'(\overline{x})\leq 0$. This leads to 
$$WT_R'(\overline{x})\leq -\big( a \nu T_R'\big)'(\overline{x})+WT_R'(\overline{x})=\nu(\overline{x})W.$$
Hence, $T_R'(\overline{x})\leq \nu(\overline{x})\leq \mu$. We conclude that $\max_{[0,R]}a\nu T_R'\leq \mu^3$ and thus $\max_{[0,R]} T_R'\leq \mu^5$, from which $T'\leq \mu^5$ follows by letting $R\to +\infty$. The inequality $T'\geq 1/\mu^5$ is proved similarly. 

Assume now that $S$ is another solution of (\ref{eq:T}) such that $S(x)/|x|$ is bounded. Let $R=S-T$. One has 
$$-\big( a(x)\nu(x)R'\big)'+WR'=0 \hbox{ in } \R, \quad R(0)=0.$$
Assume by contradiction that there exists $x_0$ such that $R'(x_0)>0$. Then one gets 
$$(a\nu R')(x)=(a\nu R')(x_0)e^{\int_{x_0}^x W/(a\nu)}\geq (a\nu R')(x_0)e^{W/\mu^2 (x-x_0)}.$$
This is a contradiction since $R'$ would then grow exponentially, contradicting $R(x)/|x|$ bounded. Hence $R'\leq 0$. One gets $R'\geq 0$ by symmetry and thus $R$ is constant equal to $0$ since $R(0)=0$. This shows uniqueness: $S\equiv T$. 
\end{proof}

We could similarly construct an adjoint solution. 

\begin{prop}
There exists a unique solution $\tilde{T}$ of 
\begin{equation} \label{eq:Ttilde}
\big( a(x)\nu(x)\tilde{T}'\big)'+W\tilde{T}'=\nu(x)W \hbox{ in } \R, \quad \tilde{T}(0)=0
\end{equation}
such that $\tilde{T}(x)/|x|$ is bounded over $\R$. Moreover, one has $m\leq \tilde{T}'(x)\leq M$ for all $x\in \R$, for $m=1/ \mu^5$ and $M= \mu^5$. 
\end{prop}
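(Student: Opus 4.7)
The plan is to reduce the existence question to Proposition \ref{prop:T} by exploiting a reflection symmetry, to obtain the sharper pointwise derivative bound by running a direct maximum principle argument on the adjoint equation, and to mirror the Liouville-type uniqueness argument with the decisive asymptotic direction swapped from $x\to+\infty$ to $x\to-\infty$.

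For existence I would set $S(y):=-\tilde T(-y)$ together with the reflected coefficients $\hat a(y):=a(-y)$ and $\hat\nu(y):=\nu(-y)$, which satisfy the same bounds (\ref{hyp:original}). A short computation, pulling the factor $-1$ through $\partial_x$, shows that $\tilde T$ solves (\ref{eq:Ttilde}) if and only if
$$-\big(\hat a(y)\hat\nu(y)S'\big)'+WS'=\hat\nu(y)W\text{ in }\R,\quad S(0)=0,$$
which is precisely equation (\ref{eq:T}) with coefficients $(\hat a,\hat\nu)$. Proposition \ref{prop:T} then supplies a unique such $S$ with $S(y)/|y|$ bounded, and setting $\tilde T(x):=-S(-x)$ delivers a solution of (\ref{eq:Ttilde}) with $\tilde T(0)=0$ and $\tilde T(x)/|x|$ bounded. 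Uniqueness may be transferred through the same change of variables, but can also be argued directly: if $S_1,S_2$ both satisfy (\ref{eq:Ttilde}) with linear growth, then $R:=S_1-S_2$ solves $(a\nu R')'+WR'=0$ with $R(0)=0$; introducing $y:=a\nu R'$ one finds $y'=-Wy/(a\nu)$, so $y(x)=y(0)\exp(-\int_0^x W/(a\nu))$, and if $y(0)\neq 0$ this quantity grows exponentially as $x\to -\infty$, violating the linear growth of $R$. Hence $y\equiv 0$, $R'\equiv 0$ and $R\equiv 0$.

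To reach the sharper pointwise bound $1/\mu^4\le\tilde T'\le\mu^4$ --- the bare change of variables only hands back $1/\mu^5\le\tilde T'\le\mu^5$ --- I would redo the maximum principle argument of Proposition \ref{prop:T} directly on (\ref{eq:Ttilde}). Working with approximants $\tilde T_R$ obtained by solving on a bounded interval with Dirichlet data, inspection of $g:=a\nu\tilde T_R'$ at an interior extremum $\bar x$ gives $g'(\bar x)=0$, and the adjoint equation then forces the equality $\tilde T_R'(\bar x)=\nu(\bar x)$, stronger than the inequality $T_R'(\bar x)\le\nu(\bar x)$ available in the $T$ case. Tracking this saturation carefully through the chain $\tilde T_R'(\bar x)=\nu(\bar x)\Rightarrow g(\bar x)=a(\bar x)\nu(\bar x)^2\Rightarrow \tilde T_R'=g/(a\nu)$ is what should save one factor of $\mu$ compared with the $T$ argument, giving $M=\mu^4$; the lower bound $m=1/\mu^4$ is obtained symmetrically at a minimum of $g$.

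The main obstacle I anticipate is precisely this refinement: the symmetry-based reduction is essentially cost-free for existence, growth and uniqueness, but it returns the coarser constant $\mu^5$, so extracting the sharper $\mu^4$ requires a dedicated direct argument on (\ref{eq:Ttilde}) that squeezes the maximum principle tighter for the adjoint equation than it could be for $T$ itself.
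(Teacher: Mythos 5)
Your reduction via $S(x):=-\tilde T(-x)$ with reflected coefficients is exactly the paper's proof, which consists of that one line; your verification that the reflected problem is precisely \eqref{eq:T} for $(\hat a,\hat\nu)$, and your transfer of existence, linear growth and uniqueness, are correct (and your direct uniqueness argument via $y=a\nu R'$ is a faithful unfolding of the paper's own uniqueness step for $T$).

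Where your proposal has a genuine gap is the claimed upgrade from $\mu^5$ to $\mu^4$. You are right that the reflection only hands back $1/\mu^5\le\tilde T'\le\mu^5$, but the refinement you sketch does not close. At an interior maximum $\bar x$ of $g:=a\nu\tilde T_R'$ the saturation $\tilde T_R'(\bar x)=\nu(\bar x)$ gives $g(\bar x)=a(\bar x)\nu(\bar x)^2\le\mu^3$, which is the \emph{same} bound on $\max g$ as in the $T$ argument (there one gets $g(\bar x)=a(\bar x)\nu(\bar x)\,T_R'(\bar x)\le \mu\cdot\mu\cdot\mu=\mu^3$ as well); dividing by $a\nu\ge\mu^{-2}$ then returns $\tilde T'\le\mu^5$ either way. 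The equality at $\bar x$ does not propagate to other points, so no factor of $\mu$ is saved by the chain you wrote. In fact the paper's own proof establishes only $m=1/\mu^5$, $M=\mu^5$ for $\tilde T$, so the constants $\mu^{\pm4}$ in the statement appear to be a misprint rather than something you were expected to prove; since only the $\mu$-dependence of $m,M$ is used downstream (e.g.\ in Lemma \ref{lem:compfg}), you should either state the result with $\mu^5$ or supply an actual argument for $\mu^4$, which your current sketch does not provide.
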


\begin{proof}
We just apply the change of variables $S(x):=-\tilde{T}(-x)$ and use Proposition \ref{prop:T}.
\end{proof}

\subsection{Definition and properties of the flow $X(t;y)$}

It will sometimes be more convenient, in particular when proving the lower bound in Theorem \ref{thm:princ}, to use the flow $X(t,y)$ instead of the time $T$, which is the inverse of $x\mapsto T (y)-T (x)$. 

Namely, let $X(t;y)$  be   the unique (since $m\leq T'\leq M$) solution of 
\begin{equation} \label{eq:defX}T\big(X(t;y)\big)-Wt=T(y)\quad \hbox{ for all } t,y\in \R.\end{equation} 

\begin{lem}\label{lem:Xsemigroup}
The function $X$ satisfies the semi-group property, in the sense that for all $s>0, t>0$ and $y\in \R$, one has 
$$X\big(s;X(t;y)\big) = X(t+s;y).$$
\end{lem}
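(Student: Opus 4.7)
The plan is immediate from the defining identity \eqref{eq:defX} together with the strict monotonicity of $T$ given by Proposition~\ref{prop:T}. The key observation is that, since $T'(x) \geq m = 1/\mu^5 > 0$ for all $x \in \R$, the function $T$ is a strictly increasing (hence injective) map $\R \to \R$, so equations of the form $T(z) = c$ have at most one solution in $z$.

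First I would evaluate the defining relation of $X$ at the point $\big(s, X(t;y)\big)$ in place of $(t,y)$: by \eqref{eq:defX},
$$T\big(X(s;X(t;y))\big) - Ws = T\big(X(t;y)\big).$$
Next I would substitute into the right-hand side the defining relation of $X$ at $(t,y)$, namely $T(X(t;y)) = T(y) + Wt$, which gives
$$T\big(X(s;X(t;y))\big) = T(y) + W(t+s).$$
On the other hand, \eqref{eq:defX} applied at $(t+s, y)$ yields $T(X(t+s;y)) = T(y) + W(t+s)$ as well. Hence
$$T\big(X(s;X(t;y))\big) = T\big(X(t+s;y)\big),$$
and injectivity of $T$ (consequence of $T' \geq m > 0$) allows us to cancel $T$ from both sides and conclude $X(s;X(t;y)) = X(t+s;y)$.

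There is no real obstacle: the semigroup property is built into the additive structure $T(X(t;y)) - T(y) = Wt$ of the level-set flow, and the only nontrivial ingredient is the already-established lower bound $T' \geq m > 0$ from Proposition~\ref{prop:T}, which guarantees that $T$ is a bijection of $\R$ onto its image and therefore that $X(t;\cdot)$ is well-defined and can be uniquely inverted through $T$.
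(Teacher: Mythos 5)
Your proof is correct and is essentially the same as the paper's: both verify that $X\big(s;X(t;y)\big)$ satisfies the defining relation $T(z)-W(t+s)=T(y)$ characterizing $X(t+s;y)$, and conclude by the uniqueness coming from $m\leq T'\leq M$. Your write-up just makes the final injectivity step explicit where the paper leaves it implicit.
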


\begin{proof}
One has 
$$T\Big(X\big(s,X(t;y)\big)\Big) -Wt-Ws=T\big(X(t;y)\big) -Wt=T(y).$$
The conclusion follows. 
\end{proof}

\begin{lem}\label{lem:f} The function 
$$f(t,x):= T (x)-Wt$$
is a time-global solution of 
\begin{equation} \label{eq:f}
\nu (x) \partial_{t}f - \partial_{x }\big( \nu (x)a(x)\partial_{x}f\big)+W\partial_{x}f=0 \hbox{ for all } t\in\R, x\in \R.
\end{equation}
Moreover, 
$f\big(t,\cdot+X(t,y)\big)-f(0,y)$ admits as a unique root $X(t;y)$ for all $t,x\in \R$, and one has:
\begin{equation} \label{eq:estg} \forall t>0, x,y\in \R, \quad m |x|\leq |f\big(t,x+X(t;y)\big)-f\big(0,y\big)| \leq M|x|, \end{equation}
and $W/M\leq X'\leq W/m$ over $\R$. 
\end{lem}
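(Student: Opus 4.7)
The plan is to unwind the definitions of $f$, $T$ and $X(\cdot;y)$: each of the three claims in the lemma follows from a short direct computation once Proposition~\ref{prop:T} is in hand.

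First I would check the PDE. Since $\partial_t f = -W$ and $\partial_x f = T'(x)$, substitution into the left-hand side of \eqref{eq:f} leaves
$$-W\nu(x) - \bigl(\nu(x) a(x) T'(x)\bigr)' + W T'(x),$$
which is exactly the zero quantity given by the equation \eqref{eq:T} satisfied by $T$. This identity is to be read in the weak sense, consistent with the regularity of $T$ coming out of Proposition~\ref{prop:T} (namely $T'\in L^\infty(\R)$ and $a\nu T' \in W^{1,\infty}_{\mathrm{loc}}$).

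Next I would dispose of the root statement and of \eqref{eq:estg} together. The bound $T'\geq m>0$ from Proposition~\ref{prop:T} makes $f(t,\cdot) = T(\cdot)-Wt$ strictly increasing on $\R$, and the fact that $T'\geq m$ with $T(0)=0$ forces $T$ to be surjective onto $\R$, so $f(t,\cdot):\R\to\R$ is a bijection. Hence the map $z\mapsto f(t,z)-f(0,y)$ has a unique zero, which by the defining relation \eqref{eq:defX} is $X(t;y)$. The bilateral bound \eqref{eq:estg} then results from a single application of the mean value theorem to $T$ between $X(t;y)$ and $x+X(t;y)$, combined with the identity $f(t,X(t;y)) = T(X(t;y))-Wt = T(y) = f(0,y)$, and the inequalities $m\leq T'\leq M$.

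Lastly I would compute the $t$-derivative of $X$ by differentiating the relation $T(X(t;y)) = T(y)+Wt$. Since $T$ is Lipschitz with uniformly positive derivative, the implicit function theorem legitimates this differentiation and yields $T'(X(t;y))\,\partial_t X(t;y) = W$, hence $\partial_t X = W/T'(X) \in [W/M, W/m]$, as claimed. There is no real obstacle here; the only mildly delicate point is the regularity needed to justify both the PDE verification and the implicit-function-theorem argument, and both are supplied by the $L^\infty$ bound on $T'$ from Proposition~\ref{prop:T} together with the weak form of \eqref{eq:T}.
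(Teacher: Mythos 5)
Your proposal is correct and follows essentially the same route as the paper: direct substitution into the PDE using \eqref{eq:T}, monotonicity of $T$ for the uniqueness of the root and the bilateral estimate, and implicit differentiation of $T(X(t;y))=T(y)+Wt$ for the bounds on $X'$. The only difference is that you spell out the details the paper leaves to the reader (surjectivity of $f(t,\cdot)$, the weak-sense reading of the equation), and your sign convention $f_x=T'\in[m,M]$ is the correct one, whereas the paper's proof contains a harmless sign slip at that point.
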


\begin{proof} 
One easily verifies that $f$ satisfies (\ref{eq:f}) and the fact that $X(t;y)$ is the unique root of $f\big(t,\cdot+X(t,y)\big)-f(0,y)$. 
Estimate (\ref{eq:estg}) follows from $m\leq T'\leq M$. 
By translating the origin one can assume that $y=0$. As $f\big( t,X(t;0)\big)=0$, one has $f_{t}+X'(t;0)f_{x}=0$ at $(t,X(t;0))$ and thus, as $f_{t}\equiv -W$ and $-M\leq f_{x}(t,X(t;0))\leq -m$, one has $W/M<X'(t;0)\leq W/m$. 
\end{proof}

Similarly,
let $Y(t;y)$  be   the unique solution of $\tilde{T}(Y(t;y))-Wt=\tilde{T}(y)$ for all $t,y\in R$. 
We could prove that 
$$g(t,x):= \tilde{T}(x)-Wt$$
is a time-global solution of 
\begin{equation} \label{eq:g}\left\{ \begin{array}{l}
-\nu (x) \partial_{t}g - \partial_{x }\big( \nu (x)a(x)\partial_{x}g\big)-W\partial_{x}g=0 \hbox{ for all } t\in\R, x\in \R,\\
g(t,Y(t;0))=0 \quad \hbox{ for all } t\in \R.\\ \end{array} \right.
\end{equation}
Moreover,
$$\forall t>0, x,y\in \R, \quad m |y|\leq |g\big(t,y+Y(t;x)\big)-g\big(0,x\big)| \leq M|y|.$$

\begin{lem}\label{lem:compfg} If $W\neq 0$,
there exists a constant $\tau=4\mu^7 /W$ such that 
\begin{equation} \label{eq:compfg} |T-\tilde{T}|\leq \tau.\end{equation}
Hence, 
\begin{equation} \label{eq:compXY} |X-Y|\leq 2\tau/m.\end{equation}
\end{lem}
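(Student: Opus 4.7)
The plan is to observe that both defining ODEs admit an exact-derivative form whose integration yields first-order identities; a suitable linear combination will then cancel the (unbounded) primitive of $\nu$ and expose a closed-form pointwise expression for $T-\tilde{T}$.

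First I would rewrite \eqref{eq:T} and \eqref{eq:Ttilde} as
$$\frac{d}{dx}\big[a(x)\nu(x)T'(x)-WT(x)\big]=-W\nu(x),\qquad \frac{d}{dx}\big[a(x)\nu(x)\tilde{T}'(x)+W\tilde{T}(x)\big]=W\nu(x),$$
and then integrate from $0$ to $x$. Setting $c_1:=(a\nu T')(0)$, $c_2:=(a\nu\tilde{T}')(0)$ and $N(x):=\int_0^x\nu$, this yields
$$a\nu T'-WT=c_1-WN,\qquad a\nu\tilde{T}'+W\tilde{T}=c_2+WN.$$
The key observation is that \emph{adding} these two identities cancels the $WN$ term and produces
$$W\big(T-\tilde{T}\big)(x)=a(x)\nu(x)\big(T'(x)+\tilde{T}'(x)\big)-(c_1+c_2),$$
so that $T-\tilde{T}$ is controlled pointwise purely in terms of $a$, $\nu$ and the first derivatives of $T,\tilde{T}$ at the current point and at the origin.

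I would then bound the right-hand side uniformly using \eqref{hyp:original} together with the $L^\infty$ bounds on $T',\tilde{T}'$ given by Proposition \ref{prop:T} and its adjoint. A brief accounting ($a\nu\leq\mu^2$, and $T',\tilde{T}'\leq\mu^5$ since $\mu\geq 1$) shows that both $|a\nu(T'+\tilde{T}')|$ and $|c_1+c_2|$ are bounded by $2\mu^7$, hence $|T-\tilde{T}|\leq 4\mu^7/W=\tau$, which is \eqref{eq:compfg}.

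For \eqref{eq:compXY} I would combine the defining identities $T(X(t;y))-Wt=T(y)$ and $\tilde{T}(Y(t;y))-Wt=\tilde{T}(y)$: rewriting $T\circ Y=\tilde{T}\circ Y+(T-\tilde{T})\circ Y$ and subtracting gives
$$T\big(X(t;y)\big)-T\big(Y(t;y)\big)=(T-\tilde{T})(y)-(T-\tilde{T})\big(Y(t;y)\big),$$
whose absolute value is at most $2\tau$ by \eqref{eq:compfg}. Since $T'\geq m$, the mean value theorem then yields $m\,|X(t;y)-Y(t;y)|\leq 2\tau$, which is the desired inequality.

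The computation is essentially routine; the only step requiring any insight is recognizing that the opposite signs of the drift terms in \eqref{eq:T} and \eqref{eq:Ttilde} are precisely what make the sum of the two integrated identities eliminate the unbounded quantity $N(x)$. Without this cancellation one would only obtain an estimate growing linearly in $|x|$, which would be useless.
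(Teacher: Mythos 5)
Your proof is correct and follows essentially the same route as the paper: the paper adds the two equations first to get $\big(a\nu(T+\tilde T)'\big)'=W(T-\tilde T)'$ and then integrates, whereas you integrate each equation first and then add, but both arrive at the identical identity $W(T-\tilde T)(x)=a\nu(T+\tilde T)'(x)-a\nu(T+\tilde T)'(0)$ and the same $4\mu^7$ bound. The derivation of $|X-Y|\leq 2\tau/m$ from $T(X(t;y))-\tilde T(Y(t;y))=(T-\tilde T)(y)$ also matches the paper's argument.
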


\begin{proof}
Let $R:= T-\tilde{T}$. One has 
$$\big( a\nu(T+\tilde{T})'\big)'=W R'\hbox{ in } \R, \quad R(0)=0.$$
Integrating, one gets
$$W R(x)=\big(a\nu(T+\tilde{T})'\big)(x)-\big(a\nu(T+\tilde{T})'\big)(0) \leq 2\mu^2 (M-m)\leq 4\mu^7.$$
On the other hand, $W R\geq \frac{2}{\mu^2} (m-M)\geq -4\mu^3$. 

Next, as $T\big(X(t;y)\big)-\tilde{T}\big(Y(t;y)\big)=T(y)-\tilde{T}(y)$, one has 
$$m| X(t;y)-Y(t;y)|\leq |T\big(X(t;y)\big)-T\big(Y(t;y)\big)|\leq |(T-\tilde{T})(y)|+|(T-\tilde{T})\big(Y(t;y)\big)|\leq 2\tau.$$
\end{proof}

Lastly, we have the following technical inequality.

\begin{lem}\label{lem:compfg2}
There exists a constant $C=C(\mu)>0$ (independent of $W$) such that for all $t>0$, $x,y\in \R$:
$$|f(t,x)-f(0,y)|\leq C|g(t,x)-g(0,y)|+C\sqrt{t}.$$
\end{lem}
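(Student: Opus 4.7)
The plan is to pass from $f$ to $g$ in two different ways depending on the size of $t$ relative to $1/W^{2}$, and then to combine the two estimates. Writing $f(t,x)-f(0,y)=T(x)-T(y)-Wt$ and $g(t,x)-g(0,y)=\tilde{T}(x)-\tilde{T}(y)-Wt$, I would first dispose of the trivial case $W=0$, where the proof of Proposition \ref{prop:T} gives $T\equiv \tilde{T}\equiv 0$ so both sides vanish. Hence I may assume $W>0$.

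In the regime $t\geq 1/W^{2}$ one has $\tau=4\mu^{7}/W\leq 4\mu^{7}\sqrt{t}$, so the pointwise bound $|T-\tilde{T}|\leq \tau$ from Lemma \ref{lem:compfg} yields
$$\bigl|(T(x)-T(y)-Wt)-(\tilde{T}(x)-\tilde{T}(y)-Wt)\bigr|\leq |T(x)-\tilde{T}(x)|+|T(y)-\tilde{T}(y)|\leq 2\tau\leq 8\mu^{7}\sqrt{t},$$
and a triangle inequality gives $|f(t,x)-f(0,y)|\leq |g(t,x)-g(0,y)|+8\mu^{7}\sqrt{t}$. In the complementary regime $t<1/W^{2}$ the bound $|T-\tilde{T}|\leq \tau$ is too weak to be absorbed by $C\sqrt{t}$, but now $Wt\leq \sqrt{t}$, and I would instead compare $T(x)-T(y)$ directly to $\tilde{T}(x)-\tilde{T}(y)$ using monotonicity. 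Proposition \ref{prop:T} and its adjoint version give
$$|T(x)-T(y)|\leq M|x-y|\leq \frac{M}{m}|\tilde{T}(x)-\tilde{T}(y)|\leq \frac{M}{m}\bigl(|g(t,x)-g(0,y)|+Wt\bigr),$$
whence $|f(t,x)-f(0,y)|\leq (M/m)|g(t,x)-g(0,y)|+(M/m+1)\sqrt{t}$.

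Taking $C=\max\{8\mu^{7},\,M/m+1\}$, a constant depending only on $\mu$, yields the stated inequality in both regimes. The only mildly delicate point, and the step where a single-pronged approach would fail, is the choice of threshold $t=1/W^{2}$: it is precisely the value at which the pointwise gap $\tau\sim 1/W$ coming from Lemma \ref{lem:compfg} and the pure-drift scale $Wt$ both cross $\sqrt{t}$, so exactly one of the two comparisons is effective at any given pair $(W,t)$. Everything else reduces to triangle inequalities applied to quantities already controlled in Proposition \ref{prop:T} and Lemma \ref{lem:compfg}.
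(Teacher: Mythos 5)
Your proof is correct and rests on the same two ingredients as the paper's argument — the $W$-dependent bound $|T-\tilde{T}|\le\tau=4\mu^{7}/W$ from Lemma \ref{lem:compfg} together with the two-sided Lipschitz bounds $m\le T',\tilde{T}'\le M$ — combined at the same crossover scale $t\sim 1/W^{2}$; the paper merely packages your case split as the single interpolation $\min\{2\tau/m,\,2Wt/m\}\le\frac{2}{m}\sqrt{W\tau t}$ applied to $|X(t;y)-Y(t;y)|$, using that $W\tau=4\mu^{7}$ is independent of $W$. No further comment is needed.
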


\begin{proof}
One computes
$$\begin{array}{rcl}
 |f\big(t,x\big)-f(0,y)|&\leq&M|x-X(t;y)|\\
 &\leq & M|x-Y(t;y)|+M|Y(t;y)-X(t;y)|\\
&\leq & \frac{M}{m}|g(t,x)-g(0,y)| +M|Y(t;y)-X(t;y)|.\\
\end{array}$$
Now, (\ref{eq:compXY}) yields $|X-Y|\leq 2\tau/m$ and Lemma \ref{lem:f} gives $|X(t;y)-Y(t;y)|\leq 2Wt/m$ for all $t>0$ and $y\in \R$. Hence, 
$|X(t;y)-Y(t;y)|\leq \frac{2}{m}\sqrt{W t\tau}$ for all $t>0$ and $y\in \R$. The conclusion follows since $W\tau=4\mu^7$ does not depend on $\mu$. 

\end{proof}

%and $q$ the unique solution of 
%\begin{equation} \label{eq:q}\left\{ \begin{array}{l}
%\nu (x) \partial_{t}q - \partial_{x }\big( \nu (x)\partial_{x}q\big)+W\partial_{x}q=0 \hbox{ for all } t\in (0,\infty), x>Y(t),\\
%q(0,x) = p(0,x) \hbox{ for all } x>0, \quad q\big( t,Y(t)\big)=0 \hbox{ for all } t>0.\\
%\end{array}\right. \end{equation}
%
%Clearly, as $g\geq f$ by Lemma \ref{lem:compfg}, one has $X\geq Y$ and thus $p\leq q$ by the parabolic comparison principle. 

\section{The upper bound}

We define for all $\alpha>0$:
$$G_{\alpha}(t,x):=e^{\alpha  g(t,x)}.$$
Easy computations yield
\begin{equation} 
-\nu (x) \partial_{t}G_{\alpha} - \partial_{x }\big( \nu (x)a(x)\partial_{x}G_{\alpha}\big)-W\partial_{x}G_{\alpha}= -\alpha^{2}\nu (x) a(x)(\partial_{x}g)^{2}G_{\alpha} \hbox{ for all } t\in\R, x\in \R.\end{equation}

%
%\begin{prop}\label{prop:uboundalpha} There exists a constant $C>0$ (independent of $\alpha$) such that
%$$ \Big(\int_{\R}\nu (x)F_{2\alpha}(t,x) p^{2}(t,x)dx\Big)^{1/2} \leq \frac{ Ce^{\alpha^{2}M^{2}t}}{t^{1/4}}\int_\R\nu (x)G_{\alpha}(0,x) p_{0}(x)dx.$$
%\end{prop}

\begin{proof}[Proof of the upper bound in Theorem \ref{thm:heatkernel}]
Take $\alpha \in \R$ and let $V_{k}(t):= \int_{\R}\nu (x)G_{\alpha k}(t,x) p^{k}(t,x)dx$. Easy computations yield
$$\begin{array}{rl}V_{2k}'(t) =& 4\alpha^{2} k^2\int_{\R}\nu (x)a(x)g_{x}^{2}(t,x)G_{2\alpha k}(t,x) p^{2k}(t,x)dx\\
& - 2k(2k-1) \int_{\R}\nu (x)a(x)G_{2\alpha k}(t,x) p_{x}^{2}(t,x)p^{2k-2}(t,x)dx.\end{array}
$$
Let $\Psi:=G_{\alpha k}(t,x) p^{k}(t,x)$, so that $\Psi_x=\alpha k g_x G_{\alpha k} p^{k}+ k G_{\alpha k}p_x p^{k-1}$ and thus 
$$G_{2\alpha k}(p_x)^2 p^{2k-2}=\Big(G_{\alpha k}p_x p^{k-1}\Big)^2=\frac{1}{k^2}\Big( \Psi_x-\alpha k g_x G_{\alpha k} p^{k} \Big)^2.$$ 
We get 
$$\begin{array}{rcl}V_{2k}'(t) &=& 4\alpha^{2} k^2\int_{\R}\nu a g_{x}^{2}G_{2\alpha k}p^{2k} - \frac{2(2k-1)}{k} \int_{\R}\nu a\Big( \Psi_x^2-2\alpha k g_x G_{\alpha k} p^{k} \Psi_x+\alpha^2 k^2 g_x^2 G_{2\alpha k} p^{2k}\Big)\\
&\leq & 16\alpha^{2} k^2\int_{\R}\nu a g_{x}^{2}G_{2\alpha k}p^{2k} - \frac{(2k-1)}{k} \int_{\R}\nu a \Psi_x^2 \\
\end{array}
$$

Next, the Nash inequality applied to $\Psi$ yields that there exists a constant $C>0$ such that:
\begin{equation} \label{eq:V2k} V_{2k}'(t)\leq 16\alpha^{2}k^2 \|a\|_\infty M^{2}V_{2k}(t) - 2CV_{2k}^{3}(t)/V_{k}^{4}(t).\end{equation}

We now use the same arguments as in Section 1 of \cite{FabesStroock}. Namely, let $p_{j}:=2^{j}$, $U_{j}(t):= \| G_{\alpha}(t,\cdot)p(t,\cdot)\|_{L^{p_{j}}(\nu (x) dx)}$ and 
 $W_{j}(t):= \max \{ s^{(1/4-1/2p_{j})}U_{j}(s), \ 0\leq s\leq t\}$. 
 Then 
$$U_{j}'(t)\leq 8 \times 2^{j} \alpha^{2}M^{2}  \|a\|_\infty U_{j}(t)-\frac{C}{2^{j}}\Big(\frac{t^{1/4-1/2^{j}}}{W_{j-1}(t)}\Big)^{2^{j+1}}U_{j}(t)^{1+2^{j+1}}$$
and thus one derives from Lemma 1.4 of \cite{FabesStroock} that there exists a constant, that we still denote $C$, such that
 $$W_{j}(t)\leq W_{j-1}(t) (4^{j}C)^{1/2^{j+1}}e^{C\alpha^{2}t/2^{j}}.$$
We thus conclude that, even if it means increasing $C$, 
$$\sup_{j}W_{j}(t)\leq C e^{C\alpha^{2}t} W_{1}(t).$$
Hence, %as $W_1 \equiv U_1$, 
$$U_{j}(t)=\| G_{\alpha}(t,\cdot)p(t,\cdot)\|_{L^{p_{j}}(\nu (x) dx)} \leq \frac{C}{t^{1/4-1/2^{j+1}}} e^{C\alpha^{2}t} W_1(t).$$
Letting $j\to +\infty$, it follows that
$$\| G_{\alpha}(t,\cdot)p(t,\cdot)\|_{\infty} \leq \frac{C}{t^{1/4}} e^{C\alpha^{2}t} W_1(t).$$
We also know that 
$$U_{1}'(t)\leq 16 \alpha^{2}M^{2}  \|a\|_\infty U_{1}(t)$$
and thus $U_1(t)\leq U_1(0)e^{16 \alpha^{2}M^{2}  \|a\|_\infty t}$. As $W_1(t)=\max \{ U_1(s), 0\leq s\leq t\}$ by definition of $W_1$, we have proved that $W_1(t)\leq U_1(0)e^{C\alpha^2 t}$ for some constant $C$ and thus, even if it means increasing $C$:
\begin{equation} \label{eq:Uj}\| G_{\alpha}(t,\cdot)p(t,\cdot)\|_{\infty} \leq \frac{C}{t^{1/4}} e^{C\alpha^{2}t}\| G_{\alpha}(0,\cdot)p(0,\cdot)\|_{L^{2}(\nu (x) dx)}  .\end{equation}

On the other hand, one easily checks that 
$$0\leq V_{1}'(t) = 4\alpha^{2} \int_{\R}\nu (x)a(x)g_{x}^{2}(t,x)G_{2\alpha k}(t,x) p(t,x)dx\leq 4\alpha^{2}M^2 \|a\|_\infty V_1(t). $$
It follows that $V_1$ is nondecreasing and $V_{1}(t)\leq e^{4\alpha^{2}M^2\|a\|_\infty t} V_{1}(0)$.

Now, using again (\ref{eq:V2k}) with $k=1$, we get
$$U_{1}'(t)\leq 16\alpha^{2}M^{2}  \|a\|_\infty U_{1}(t)-\frac{C}{2}\Big(\frac{1}{U_0(t)}\Big)^{4}U_{1}(t)^{5}.$$
As $U_0\equiv V_1$ and $V_1$ is nondecreasing, Lemma 1.4 of \cite{FabesStroock} yields:
\begin{equation} \label{eq:Uj2}U_{1}(t)=\| G_{\alpha}(t,\cdot)p(t,\cdot)\|_{L^{2}(\nu (x) dx)} \leq \frac{C}{t^{1/4}} e^{C\alpha^{2}t} U_0(t)=\frac{C}{t^{1/4}} e^{C\alpha^{2}t} \| G_{\alpha}(0,\cdot)p(0,\cdot)\|_{L^{1}(\nu (x) dx)} .\end{equation} 

Combining (\ref{eq:Uj}) and (\ref{eq:Uj2}) thanks to the semi-group property, we eventually obtain
$$\| G_{\alpha}(t,\cdot)p(t,\cdot)\|_{\infty} \leq \frac{C}{t^{1/2} }e^{C\alpha^{2}t}  \| G_{\alpha}(0,\cdot)p(0,\cdot)\|_{L^{1}(\nu (x) dx)},$$
that is, for all $t>0, x\in \R$:
$$e^{\alpha g(t,x)}p(t,x)\leq  \frac{C}{t^{1/2}} e^{C\alpha^{2}t}  \int_{\R}\nu (y)e^{\alpha g(0,y)} p_{0}(y)dy.$$
It terms of the gaussian $P(t,x,y)$ associated with the initial datum $\delta_{y}$, this reads
$$P(t,x,y)\leq  \frac{C}{t^{1/2}} e^{C\alpha^{2}t-\alpha g(t,x)+\alpha g(0,y)}.$$
For any $t>0$, $x,y\in \R$, we now take $\alpha = \frac{g(t,x)-g(0,y)}{2Ct}$, which yields 
$$P(t,x,y)\leq  \frac{C}{t^{1/2}} e^{ -\frac{|g(t,x)-g(0,y)|^{2}}{4Ct}}.$$
It now follows from Lemma \ref{lem:compfg2} that, for a generic constant $C=C(\mu)>0$:
$$P(t,x,y)\leq  \frac{C}{t^{1/2}} e^{ -\frac{|f(t,x)-f(0,y)|^{2}}{4Ct}}.$$

This proves the upper bound in Theorem \ref{thm:heatkernel}.

\end{proof}

\section{The lower bound}

%\begin{proof} We compute:
%$$\begin{array}{l} \int_{|x-X(t;0)|\geq R\sqrt{t}}\nu (x) p(t,x)dx \\ \leq \Big(  \int_{|x-X(t;0)|\geq R\sqrt{t}}\nu (x)F_{2\alpha}(t,x)\displaystyle p^{2}(t,x)dx \Big)^{1/2}\Big(  \int_{|x-X(t;0)|\geq R\sqrt{t}}\displaystyle \frac{\nu (x)}{F_{2\alpha}(t,x)} dx \Big)^{1/2}\\
%\leq \displaystyle\frac{C e^{\alpha^{2}M^{2}t}}{t^{1/4}}\Big(  \int_{\R}\nu (x)G_{\alpha}(0,x) p_{0}(x)dx \Big)\Big(  \int_{|x-X(t;0)|\geq R\sqrt{t}}^{\infty}\frac{1}{\sqrt{t}}|x-X(t;0)|^{3}e^{-2m|x-X(t;0)|/\sqrt{t}}dx \Big)^{1/2}\\
%%\leq \displaystyle\frac{C e^{\alpha^{2}M^{2}t}}{\sqrt{t}}\Big(  \int_{\R}\nu (x)G_{\alpha}(0,x) p_{0}(x)dx \Big)^{1/2}\Big( \int_{R\sqrt{t}}^{\infty}\frac{e^{-2\alpha m y}}{\alpha m y} dy \Big)^{1/2}.\\
%\end{array}
%$$
%?????????????????????????????
%where we have taken $\alpha = 1/\sqrt{t}$ in the last inequality.
%
%Moreover, one has 
% $F_{1/\sqrt{t}}(0,x) \leq 4f(0,x)$ for all $x\in Supp (p_{0})$ when $t$ is large enough (independent of $R$) and thus
% as 
% $$\int_{|x-X(t;0)|\geq R\sqrt{t}}^{\infty}\frac{1}{t^{2}}|x-X(t;0)|^{3}(t,x)e^{-2m|x-X(t;0)|/\sqrt{t}}dx = \int_{R}^{\infty} y^{3}e^{-2my}dy$$
% converges to $0$ as $R\to +\infty$, using
% $$\int_{|x-X(t;0)|\geq R\sqrt{t}}\nu (x) p(t,x)dx \leq C\Big(  \int_{\R}\nu (x) p_{0}(x)dx \Big)\Big(\int_{R}^{\infty} y^{3}e^{-2my}dy\Big)^{1/2}$$
%one gets the conclusion. 
%
%
%\end{proof}

Define for all $t\in (0,1)$:
$$\rho (t,x'):= e^{-|\tilde{T}(x')-(t-1)W|^{2}/\sigma(2-t)},$$
with $\sigma= 4M^2$. 

\begin{lem}\label{lem:rho}
One has:
$$-\nu (x') \partial_{t}\rho - \partial_{x' }\big( \nu (x')a(x')\partial_{x'}\rho\big)-W\partial_{x'}\rho\geq 0.$$
\end{lem}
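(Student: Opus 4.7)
The plan is to write $\rho = e^{-\phi}$ with
$$\phi(t,x') := \frac{h(t,x')^{2}}{\sigma(2-t)}, \qquad h(t,x') := \tilde{T}(x')-(t-1)W,$$
then substitute into the operator and exploit the ODE \eqref{eq:Ttilde} satisfied by $\tilde{T}$ to cancel the terms linear in $W$.

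First I would compute $\partial_t\rho = -\rho\,\partial_t\phi$ and $\partial_{x'}\rho = -\rho\,\partial_{x'}\phi$, and expand
$$\partial_{x'}\bigl(\nu a\,\partial_{x'}\rho\bigr) = -(\nu a)'\rho\,\partial_{x'}\phi + \nu a\,\rho(\partial_{x'}\phi)^{2} - \nu a\,\rho\,\partial_{x'}^{2}\phi.$$
Since $\rho>0$, the lemma reduces to the pointwise (a.e.) inequality
$$\nu\,\partial_t\phi + (\nu a)'\,\partial_{x'}\phi + \nu a\,\partial_{x'}^{2}\phi + W\,\partial_{x'}\phi \;\geq\; \nu a(\partial_{x'}\phi)^{2}.$$
Using $\partial_t h = -W$ and $\partial_{x'}h = \tilde{T}'$, the relevant derivatives read
$$\partial_t\phi = \frac{-2Wh}{\sigma(2-t)} + \frac{h^{2}}{\sigma(2-t)^{2}}, \quad \partial_{x'}\phi = \frac{2h\tilde{T}'}{\sigma(2-t)}, \quad \partial_{x'}^{2}\phi = \frac{2(\tilde{T}')^{2}+2h\tilde{T}''}{\sigma(2-t)}.$$

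The crucial step is to group together the three terms carrying $\tilde{T}''$ or $(\nu a)'$, namely $\nu a\,\partial_{x'}^{2}\phi + (\nu a)'\partial_{x'}\phi + W\partial_{x'}\phi$. After factoring, this becomes
$$\frac{2}{\sigma(2-t)}\Bigl[\nu a(\tilde{T}')^{2} + h\bigl((a\nu\tilde{T}')' + W\tilde{T}'\bigr)\Bigr],$$
and equation \eqref{eq:Ttilde} says precisely $(a\nu\tilde{T}')' + W\tilde{T}' = \nu W$, so this collapses to $\dfrac{2\nu a(\tilde{T}')^{2} + 2h\nu W}{\sigma(2-t)}$. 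Adding $\nu\partial_t\phi - \nu a(\partial_{x'}\phi)^{2}$, the two $Wh$-terms exactly cancel and one is left with
$$\frac{2\nu a(\tilde{T}')^{2}}{\sigma(2-t)} + \frac{\nu h^{2}}{\sigma^{2}(2-t)^{2}}\bigl[\sigma - 4a(\tilde{T}')^{2}\bigr].$$
The first summand is nonnegative. The second is nonnegative provided $\sigma \geq 4a(\tilde{T}')^{2}$ a.e.; choosing $M$ as an upper bound for $\sqrt{a}\,\tilde{T}'$ (which exists by \eqref{hyp:original} and the bound $|\tilde{T}'|\leq \mu^{4}$ from the adjoint proposition), this is exactly what $\sigma = 4M^{2}$ ensures.

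The main obstacle is purely algebraic: spotting that the ODE \eqref{eq:Ttilde} is what produces the cancellation of the $W$-linear terms, so that the remaining quadratic form in $h$ has a definite sign. The rest is a careful but routine substitution; the only sign-sensitive point is tracking the minuses introduced by $\rho = e^{-\phi}$ when differentiating the divergence term.
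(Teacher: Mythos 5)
Your proof is correct and follows essentially the same route as the paper: a direct computation in which equation (\ref{eq:Ttilde}) for $\tilde{T}$ cancels the terms linear in $W$, leaving a quadratic form in $h$ whose sign is controlled by the choice of $\sigma$. Your bookkeeping is in fact slightly more careful than the paper's displayed computation, which carries a spurious factor $a(x')$ in the term coming from $-\nu\,\partial_t\rho$; as you observe, the correct requirement is $\sigma\geq 4a(\tilde{T}')^{2}$ rather than $\sigma\geq 4(\tilde{T}')^{2}$, so one should take $\sigma=4\mu M^{2}$ (or redefine $M$ to bound $\sqrt{a}\,\tilde{T}'$) --- a harmless enlargement of the constant that does not affect the lemma.
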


\begin{proof}
One computes:
$$\begin{array}{l}
-\nu (x') \partial_{t}\rho - \partial_{x' }\big( \nu (x')a(x')\partial_{x'}\rho\big)-W\partial_{x'}\rho\\
\displaystyle
= \frac{\nu (x')a(x')|\tilde{T}(x')-(t-1)W|^2}{\sigma(2-t)^2}\rho-\frac{4 a(x')\nu (x')|\tilde{T}(x')-(t-1)W|^2 g_{x'}^2(t,x')}{\sigma^2(2-t)^2}\rho\\
\displaystyle
+\frac{2\nu (x')a(x')g_{x'}^2(t,x')}{\sigma(2-t)}\rho\\
\displaystyle
\geq \frac{2\nu (x')a(x')g_{x'}^2(t,x')}{\sigma(2-t)}\rho\geq 0 \quad \hbox{(since $\sigma= 4M^2$)}.\\
\end{array}$$

\end{proof}

Let 
$$G_{x}(t):= \int_{\R}\rho(t,x')\nu (x') \ln \Big( P\big(t,x',Y(-1;x)\big)\Big)dx'.$$

Define the auxiliary functions 
$$Q(t):= \int_{\R}\rho(t,x')\nu (x')dx'$$
and 
$$H_{x}(t):= G_{x}(t) - Q(t)\ln \big(\overline{C}/t^{1/2}\big)=  \int_{\R}\rho(t,x')\nu (x') \ln \big( \frac{ t^{1/2}P\big(t,x',Y(-1;x)\big)}{\overline{C}}\big)dx',$$
where $\overline{C}$ is given by the upper bound in Theorem \ref{thm:heatkernel}, which
 yields that $H_{x}(t)\leq 0$ for all $t>0$. 

\begin{prop}\label{prop:entropy}
For all $R>0$, there exists a positive constant $B_{R}$ such that for all $x\in (-R,R)$:
$$ G_{x}(1)\geq -B_{R}+Q(1) \ln  \overline{C}.$$
\end{prop}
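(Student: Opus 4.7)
The plan is a Fabes-Stroock-style entropy calculation: show that $t\mapsto H_x(t)$ is (essentially) nondecreasing on $(0,1]$, and then exploit the dissipation to derive a finite lower bound at $t=1$.

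First, I would compute $H_x'(t)$ in closed form. Since $P(\cdot,\cdot,y_0)$ solves the canonical equation \eqref{eq:canonical} in $(t,x')$, the function $u:=\ln P$ satisfies
\[\nu u_{t}=(\nu a u_{x'})_{x'}+\nu a u_{x'}^{2}-Wu_{x'},\]
obtained by dividing $L P=0$ through by $P$. Writing $\tilde u:=\ln(\sqrt t P/\overline C)=u+\tfrac12\ln t-\ln\overline C$, so that $\partial_t\tilde u=u_t+1/(2t)$, I substitute into $H_x'(t)=\int\rho_t\nu\tilde u+\rho\nu\tilde u_{t}\,dx'$, integrate by parts twice in $x'$ (boundary terms at infinity vanish because $\rho$ decays like a Gaussian in $\tilde T$), and use that $\int[(\nu a\rho_{x'})_{x'}+W\rho_{x'}]\,dx'=0$ so the constant shift between $u$ and $\tilde u$ disappears. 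After collecting the $Q(t)/(2t)$ coming from $1/(2t)$, this yields
\[H_x'(t)=\int_{\R}\tilde u\bigl[\nu\rho_t+(\nu a\rho_{x'})_{x'}+W\rho_{x'}\bigr]dx'+\int_{\R}\rho\nu a u_{x'}^{2}\,dx'+\frac{Q(t)}{2t}.\]

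Next comes the sign analysis. The upper bound in Theorem \ref{thm:heatkernel} (already proved in the previous section) gives $\sqrt t P\leq\overline C$, hence $\tilde u\leq 0$ on $(0,\infty)\times\R$. Lemma \ref{lem:rho} gives $\nu\rho_t+(\nu a\rho_{x'})_{x'}+W\rho_{x'}\leq 0$, so the product in the first integral is nonnegative. Together with the nonnegativity of the energy $J_3(t):=\int\rho\nu a u_{x'}^{2}\,dx'$ and of $Q(t)/(2t)$, this gives $H_x'(t)\geq 0$, so $H_x$ is nondecreasing and $H_x(1)\geq H_x(t_0)$ for every $t_0\in(0,1)$.

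The final step, which is also the main obstacle, is to bound $H_x(t_0)$ from below for a suitable $t_0$, with a constant depending only on $R$. A naive use of the upper bound on $P$ gives $H_x(t_0)\sim -c(x)/t_0$ (with $c(x)$ bounded for $x\in(-R,R)$ in terms of the distance between $y_0=Y(-1;x)$ and the center of $\rho(0,\cdot)$), so monotonicity alone is not enough. To overcome this, I would use the sharper form visible in the proof of Lemma \ref{lem:rho},
\[\nu\rho_t+(\nu a\rho_{x'})_{x'}+W\rho_{x'}\leq -\frac{2\nu a g_{x'}^{2}}{\sigma(2-t)}\rho\leq -\frac{\rho}{\mu^{10}\sigma}\qquad\text{for $t\in[0,1]$},\]
since $g_{x'}=\tilde T'\geq 1/\mu^{4}$. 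Combined with $\tilde u\leq 0$ and $\nu\geq 1/\mu$, this upgrades the derivative identity to the linear differential inequality
\[H_x'(t)+\alpha H_x(t)\geq J_3(t)+\frac{Q(t)}{2t},\qquad\alpha:=\frac{1}{\mu^{11}\sigma}.\]
Multiplying by $e^{\alpha t}$ and integrating from $t_0$ to $1$ gives
\[e^{\alpha}H_x(1)\geq e^{\alpha t_0}H_x(t_0)+\int_{t_0}^{1}e^{\alpha s}\Bigl[J_3(s)+\frac{Q(s)}{2s}\Bigr]ds.\]
The subtle point, and the hardest step, is to show that the right-hand side remains bounded below as $t_0\to 0^{+}$: heuristically, for $P(t_0,\cdot,y_0)$ behaving like a sharp Gaussian around $y_0$, one has $u_{x'}\sim (x'-y_0)/t_0$, so the dissipation $J_3(s)$ grows like $1/s^{2}$ and $\int_{t_0}^{1}J_3(s)\,ds\sim 1/t_0$, which is precisely the rate needed to absorb the $-c(x)/t_0$ divergence of $H_x(t_0)$. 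Making this rigorous (via a Poincaré-type control of $-H_x^{2}/Q$ by $J_3$ for the Gaussian weighted measure $\rho\nu\,dx'$, together with the fact that $y_0=Y(-1;x)$ stays in an interval of size $O(R/m)$ for $x\in(-R,R)$) yields a lower bound $H_x(1)\geq -B_R$ depending only on $R$ and $\mu$, as claimed.
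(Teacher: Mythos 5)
Your computation of $H_x'(t)$ and the monotonicity $H_x'\geq 0$ are correct and coincide with the first step of the paper's argument, and you rightly observe that monotonicity alone cannot close the proof since $H_x(t_0)\to-\infty$ as $t_0\to 0^+$. But the step you defer (``making this rigorous \dots yields a lower bound $H_x(1)\geq -B_R$'') is the entire content of the proposition, and the mechanism you propose would not work as stated. Integrating the linear inequality $H_x'+\alpha H_x\geq J_3+Q/(2t)$ from $t_0$ to $1$ requires $\int_{t_0}^1 e^{\alpha s}J_3(s)\,ds$ to dominate $-e^{\alpha t_0}H_x(t_0)$ up to an $R$-dependent error; both quantities diverge like $1/t_0$, so everything hinges on the constants in front of $1/t_0$. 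A weighted Poincar\'e inequality only gives $J_3\geq \epsilon H_x^2/Q$ for some small $\epsilon=\epsilon(\mu)$, which yields $\int_{t_0}^1 J_3 \geq c\,\epsilon/t_0$ against a deficit of order $c/t_0$; there is no reason these cancel, and in general they do not.

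The paper closes the argument differently, working only on $[1/2,1]$ so that no limit $t_0\to0$ is ever taken. Starting from $H_x'\geq\int\rho\nu a\,u_{x'}^2$, a change of variables $X=\tilde T(x')-W(1-t)$ and the spectral gap inequality for the Gaussian weight give $H_x'\geq c\int\rho(\ln P-\tilde G_x)^2$; then the monotonicity of $q\mapsto(\ln q-\tilde G_x)^2/q$, the upper bound $P\leq\overline{C}/\sqrt t$, conservation of mass $\int\nu P=1$, and a truncation to the region $|\tilde T(x')-W(t-1)|\leq A$ convert this into the Riccati-type inequality $H_x'\geq\bigl(\tfrac12 e^{-A^2/\sigma}-Ce^{2+CH_x}\bigr)H_x^2$. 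This is the structural point your plan is missing: once $H_x$ drops below an explicit threshold the inequality reads $H_x'\geq\tfrac14 e^{-A^2/\sigma}H_x^2$, and integrating $\frac{d}{dt}(-1/H_x)\geq\tfrac14 e^{-A^2/\sigma}$ over $[1/2,1]$ bounds $H_x(1)$ from below \emph{independently} of how negative $H_x$ is at earlier times, so no matching of divergent constants is required. To repair your proof you would need to replace the linear Gronwall step by this quadratic, self-improving one, which in turn requires the mass-conservation and truncation arguments you did not include.
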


\begin{proof}[Proof of Proposition \ref{prop:entropy}.]

Equivalently, we need to prove that $H_{x}(1)\geq -B_R$. 
%We will now denote $X(t):=X(t;0)$ in order to ease the notations. 

We compute using Lemma \ref{lem:rho} (here $P$ is always considered at $\big(t,x',Y(-1;x)\big)$, $\rho$ at $(t,x')$, and $\nu$ at $x'$): 
$$\begin{array}{rl}
H_{x}'(t)\geq &\displaystyle\int_{\R}\big(-(a\nu\rho_{x'})_{x'}-W\rho_{x'}\big) \ln \big( \frac{ t^{1/2}P}{\overline{C}}\big)dx'+\int_{\R}\frac{\rho}{2t}\nu  dx'\\
&\displaystyle+\int_{\R}\frac{\rho}{P}\big((\nu a P_{x'})_{x'}-W P_{x'}\big)  dx'.\\
\end{array}$$
We could integrate by parts and get
$$\begin{array}{rl}
H_{x}'(t)\geq &\displaystyle\int_{\R}\big(-(a\nu\rho_{x'})_{x'}-W\rho_{x'}\big)  \ln \big( \frac{ t^{1/2}P}{\overline{C}}\big)dx'+\int_{\R}\frac{\rho}{2t}\nu  dx'\\
&\displaystyle-W\int_{\R}\rho \frac{ P_{x'}}{P} dx'\displaystyle-\int_{\R}\rho_{x'}\nu a \frac{P_{x'}}{P} dx'+\int_{\R} \rho\nu a\frac{P_{x'}^{2}}{P^{2}}dx'\\
&\\
= &\int_{\R}\frac{\rho}{2t}\nu  dx'+\int_{\R} \rho\nu a \frac{P^{2}_{x'}}{ P^{2}}dx'\displaystyle\\
&\\
\geq &\int_{\R} \rho\nu a \frac{P^{2}_{x'}}{ P^{2}}dx'.\\
\end{array}$$

%On one hand, we know from Lemma \ref{lem:boundprime} that $|f_{x}|\leq M$ and that $X'(t)\leq 1/m$. As $P(t,x,y)\leq \overline{C}/t^{1/2}$ by the upper bound in Theorem \ref{thm:heatkernel}, one has $\displaystyle\ln \big( \frac{ t^{1/2}P(t,y,x)}{\overline{C}}\big)\leq 0$. As 
%$$\rho_{x}(t,x)=-\frac{x}{2(2-t)}\rho (t,x) \hbox{ and } \rho_{xx}(t,x)=\big(\frac{x^{2}}{4(2-t)^{2}}-\frac{1}{2(2-t)}\big)\rho (t,x),$$
%one gets 
%$$\begin{array}{l}
%\displaystyle\int_{\R}\big(-\rho_{yy}-(X'(t)-W_\gamma)\rho_{y}\big) \nu  \ln \big( \frac{ t^{1/2}P}{\overline{C}}\big)dy \\
%\geq\displaystyle\int_{\R}\big(-\frac{|y-X(t)|^{2}}{4(2-t)^{2}}+\frac{1}{2(2-t)}+(1/m+W_\gamma)\frac{|y-X(t)|}{2(2-t)}\big) \rho\nu \ln \big( \frac{ t^{1/2}P}{\overline{C}}\big)dy  \\
% \displaystyle\geq CH_{x}(t)  \\
%\end{array}
%$$
%for all $t\in [0,1]$, 
%where $C$ is a generic constant only depending on $m, M, W_\gamma, \inf\nu$ and $\sup\nu$. 

%Similarly, $-\displaystyle\int_{\R}\frac{\rho}{2t}\nu  dy+\int_{\R}\nu \frac{\rho_{y}^{2}}{2\rho}dy\leq C$ for some positive constant, for all $t\in [0,1]$. 

We are left with the term involving $P_{x'}^{2}$. As $\nu$ and $a$ have positive infimum, we could use the spectral gap inequality after a change of variables $X=\tilde{T}(x')-W(1-t)$, and $u(t,X):= \ln P \big(t,x',Y(-1;x)\big)$:
$$\begin{array}{l}
\displaystyle\int_{\R} \rho\nu a \frac{P^{2}_{x'}}{P^{2}}dx'
\geq \frac{1}{C} \int_\R e^{-|\tilde{T}(x')-W(t-1)|^{2}/C(2-t)}\frac{P^{2}_{x'}}{P^{2}}(t,x',Y(-1;x))dx'\\
\\
=\displaystyle \frac{1}{C}\int_\R e^{-|X|^{2}/C(2-t)}\frac{P^{2}_{x'}}{P^{2}}\big(t,\tilde{T}^{-1}(X+W(1-t)),Y(-1;x)\big)\frac{dX}{\tilde{T}'\big( \tilde{T}^{-1}(X+W(1-t))\big)}\\
\\
=\displaystyle \frac{1}{C}\int_\R e^{-|X|^{2}/C(2-t)} u_{X}^2\big(t,X\big)\tilde{T}'\big( \tilde{T}^{-1}(X+W(1-t))\big)dX\\
\\
\geq \displaystyle \frac{m}{C}\int_\R e^{-|X|^{2}/C(2-t)} u_{X}^2\big(t,X\big)dX \quad \hbox{ since } \tilde{T}'\geq m\\
\\
\geq \displaystyle \frac{2m}{C(2-t)}\int_\R e^{-|X|^{2}/C(2-t)}\Big(u(t,X)- \tilde{G}_{x'}(t)\Big)^{2}dX \quad \hbox{ by the spectral gap inequality}\\
\\
= \displaystyle \frac{2m}{C(2-t)}\int_\R \rho (t,x')\Big( \ln P\big(t,x',Y(-1;x)\big)- \tilde{G}_{x}(t)\Big)^{2} \tilde{T}'(x')dx'\\
\\
\geq  \displaystyle \frac{2m^2}{C(2-t)}\int_\R \rho (t,x')\Big( \ln P\big(t,x',Y(-1;x)\big)- \tilde{G}_{x}(t)\Big)^{2} dx' \hbox{ since } \tilde{T}'\geq m\\
\end{array}$$
where 
$$\begin{array}{rcl} \tilde{G}_{x}(t)&:=& \displaystyle\frac{1}{\sqrt{C\pi}}\int_\R\frac{e^{-|X|^{2}/C(2-t)}}{(2-t)^{1/2}} u(t,X)dX\\
&&\\
&= & \displaystyle\frac{1}{\sqrt{C\pi (2-t)}} \int_\R \rho (t,x')\ln P\big(t,x',Y(-1;x)\big)\tilde{T}'(x')dx'.\\
\end{array}$$
As $P(t,x',x) \leq \overline{C} /t^{1/2}$ by the upper bound in Theorem \ref{thm:heatkernel}, one gets, as $q\mapsto \big(\ln q - \tilde{G}_{x}(t)\big)^{2}/q$ is decreasing on $(e^{2+\tilde{G}_{x}(t)},\infty)$:
%and $\hat{G}_{x}\geq C G_{x}$ for some constant $C>0$:
$$\begin{array}{l}
\displaystyle\int_{\R} \rho\nu a \frac{P^{2}_{x}}{P^{2}}dx
\geq \displaystyle \frac{2m^2}{C(2-t)}\frac{\big(\tilde{G}_{x}(t)-\ln (\overline{C} /t^{1/2})\big)^{2}}{\overline{C} /t^{1/2}}\int_{P\big(t,x',Y(-1;x)\big)\geq e^{2+\tilde{G}_{x}(t)}}\rho (t,x')  P(t,x',x)dx'\\
\\
\geq \frac{1}{C}  \big( \tilde{G}_{x}(t)-\ln (\overline{C} /t^{1/2})\big)^{2}\int_{P\big(t,x',Y(-1;x)\big)\geq e^{2+\tilde{G}_{x}(t)}}\nu(x') \rho(t,x')P(t,x',x)dx'\\
\end{array}$$
for all $t\in [1/2,1]$, for some new constant depending on $\mu$, that we still denote $C$. 

We now notice that 
$$\begin{array}{rcl}
 \tilde{G}_{x}(t)-\ln (\overline{C} /t^{1/2})&=& \displaystyle\frac{1}{\sqrt{C\pi (2-t)}}   \int_{\R}\rho(t,x')\ln ( \displaystyle\frac{ t^{1/2}P\big(t,x',Y(-1;x)\big)}{\overline{C}}) \tilde{T}'(x')dx'\\
 &&\\
&\leq & C \displaystyle \int_{\R}\nu (x')\rho(t,x')\ln ( \frac{ t^{1/2}P\big(t,x',Y(-1;x)\big)}{\overline{C}})dx'\\
&&\\
&=&  CH_{x}(t)\leq 0,\\
\end{array}$$
for some generic constant $C>0$. Hence, 
$\big( \tilde{G}_{x}(t)-\ln (\overline{C} /t^{1/2})\big)^{2}\geq \big(CH_{x}(t)\big)^{2}$.

On the other hand,
we know from the upper bound of Theorem \ref{thm:heatkernel} that there exists $A>0$ (which depends on $R$) such that for all 
$$\int_{|\tilde{T}(x')-W(t-1)|> A}\nu (x') P\big(t,x',Y(-1;x)\big)dx' \leq 1/2 \quad \hbox{ for all } (t,x)\in [1/2,1]\times (-R,R).$$
Hence, as 
$$\int_{\R} \nu (x')P\big(t,x',Y(-1;x)\big)dx' =\int_{\R} \nu (x')P\big(0,x',Y(-1;x)\big)dx'=1$$
since $P(0,\cdot,Y(-1;x))=\delta_{Y(-1;x)}/\nu$, one gets $\int_{|\tilde{T}(x')-W(t-1)|\leq A} \nu (x')P\big(t,x',Y(-1;x)\big)dx' \geq 1/2$ for all $(t,x)\in [1/2,1]\times (-R,R)$. 
%Moreover, 
%by Lemma \ref{lem:compfg2} and Lemma \ref{lem:f}, there exists a generic constant $C>0$ such that 
%$$\rho (t,x')\geq \frac{1}{(2-t)^{1/2}}e^{-C|f(t,x')-f(0,x)|^{2}/\sigma(2-t)}\geq \frac{1}{(2-t)^{1/2}}e^{-C|x'-X(t;x)|^{2}/\sigma(2-t)}.$$
This yields for all $t\in [1/2,1]$:
$$\begin{array}{l}
\int_{P(t,x',Y(-1;x))\geq e^{2+\tilde{G}_{x}(t)}}\nu(x') \rho(t,x') P\big(t,x',Y(-1;x)\big)dx'\\
\\
\geq  \int_{\R}\nu \rho P- e^{2+\tilde{G}_{x}(t)}\int_{\R}\nu \rho \\
\\
\geq e^{-A^{2}/\sigma}\int_{|\tilde{T}(x')-W(t-1)|\leq A}\nu  P- Ce^{2+\tilde{G}_{x}(t)}\\
\\
\geq\displaystyle \frac{1}{2}e^{-A^{2}/\sigma}- Ce^{2+CH_{x}(t)+\ln (\overline{C} /t^{1/2})}\\
\\
=\displaystyle\frac{1}{2} e^{-A^{2}/\sigma}- C e^{2+CH_{x}(t)}\\
\end{array}$$
for some generic constant $C>0$ depending on $\mu$, where we have used that $t\mapsto \int_{\R}\nu (x') \rho (t,x') dx'$ is bounded with respect to $t$. 

We conclude that there exists a constant $C>0$ such that 
$$\displaystyle\int_{\R} \rho\nu  \frac{P^{2}_{x}}{P^{2}}dx\geq \Big(\frac{1}{2} e^{-A^{2}/\sigma}- C e^{2+CH_{x}(t)}\Big)  \big( H_{x}(t)\big)^{2}.$$

It follows that for all $t\in [1/2,1]$:
$$H_{x}'(t)\geq   \Big( \frac{1}{2}e^{-A^{2}/\sigma}- C e^{2+CH_{x}(t)}\Big)  \big( H_{x}(t)\big)^{2}.$$
%+\frac{1}{2}H_{x}(t)$$

Assume that $C H_x(1)< -\frac{A^2}{\sigma}- \ln (4C)-2$. Then if there exists $t\in [1/2,1]$ such that 
$C H_x(t_0)= -\frac{A^2}{\sigma}- \ln (4C)-2$, taking the largest one, one would get for all $t\in [t_0,1]$:
$$H_{x}'(t)\geq  \frac{1}{4}e^{-A^{2}/\sigma}  \big( H_{x}(t)\big)^{2}>0,$$
and it would follow that $H_x$ would be increasing on $(t_0,1)$, contradicting  
$$C H_x(1)< -\frac{A^2}{\sigma}- \ln (4C)-2.$$
We have thus proved that $C H_x(t)< -\frac{A^2}{\sigma}- \ln (4C)-2$ for all  $t\in [1/2,1]$, from which it follows that 
$$H_{x}'(t)\geq  \frac{1}{4}e^{-A^{2}/\sigma}  \big( H_{x}(t)\big)^{2}>0 \quad \hbox{ on } [1/2,1].$$
Integrating on $(t,1)$ and using $H_x\leq 0$, this gives $H_x(1)\geq -8 e^{A^{2}/\sigma}$. 
We have thus proved that 
$$ H_x(1)\geq \min\big\{ -\frac{A^2}{C\sigma}- \frac{\ln (4C)}{C}-\frac{2}{C}, -8 e^{A^{2}/\sigma}\big\}.$$

%
%and thus, using the triangular inequality and increasing $C$ if necessary, one has 
%$$H_{x}'(t)\geq   \frac{1}{C}\Big( \frac{e^{-A^{2}/4m^2}}{2}- C e^{2+CH_{x}(t)}\Big)  \big( H_{x}(t)\big)^{2}-Ce^{A^{2}/4}. $$
%
%Assume now that 
%$$H_{x}(1) \leq \min \Big\{ \frac{1}{C}\Big( \ln \big( e^{-A^{2}/4m^2}/4C\big)-2\Big)-Ce^{A^{2}/4}, -2Ce^{A^{2}/4}\Big\}.$$
%Then as $t\mapsto H_{x}(t)+Ce^{A^{2}/4}t$ is increasing on $[1/2,1]$, one has $2+CH_{x}(t) \leq \ln \big( e^{-A^{2}/4}/4C\big)$ for all $t\in [1/2,1]$. Hence, 
%$$H_{x}'(t)\geq   \frac{1}{C}\frac{e^{-A^{2}/4m^2}}{4}\big( H_{x}(t)\big)^{2}-Ce^{A^{2}/4} \quad \hbox{ for all } t\in [1/2,1], $$
%and now, using $H_{x}(1) \leq -2Ce^{A^{2}/4m^2}$, one gets using again that $t\mapsto H_{x}(t)+Ce^{A^{2}/4}t$ is increasing on $[1/2,1]$:
%$$H_{x}(t) \leq -4C e^{A^{2}/8m^2}$$
%ant thus
%$$H_{x}'(t)\geq   \frac{1}{C}\frac{e^{-A^{2}/4m^2}}{16}\big( H_{x}(t)\big)^{2} \quad \hbox{ for all } t\in [1/2,1], $$
%from which we eventually derive, as $H_{x}(t)\leq 0$ for all $t\in [1/2,1]$, 
%$$H_{x}(1)\geq -  32 C  e^{A^{2}/4m^2}.$$
%
%We have thus proved that 
%$$H_{x}(1)=G_{x}(1)-Q(1) \ln  \overline{C}\geq  \min \Big\{ \frac{1}{C}\Big( \ln \big( e^{-A^{2}/4}/8C\big)-2\Big)-Ce^{A^{2}/4}, -  32 C  e^{A^{2}/4}\Big\}=:B_R.$$

\end{proof}

\begin{prop}\label{prop:lowbound}
For all $r>0$ large enough, there exists a constant $C$, which only depends on $\mu$, such that for all $t>0$ and $x,y\in \R$ such that 
$$|T(x)-T(y)-Wt|\leq r \sqrt{t},$$
one has 
$$P\big(t,x,y\big)\geq \frac{1}{C\sqrt{t}}.$$
\end{prop}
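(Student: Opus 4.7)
The plan is to combine Proposition \ref{prop:entropy} with Jensen's inequality, the Gaussian upper bound, and H\"older continuity to turn the entropy bound into a pointwise lower bound on $P(1,\cdot,\cdot)$, and then to deduce Proposition \ref{prop:lowbound} for all $(t,x,y)$ by parabolic scaling and translation. First I would reduce the proof to the case $t=1$ and $y=Y(-1;0)$ by two preliminary transformations: the parabolic rescaling $(t,x)\mapsto(t/t_0,x/\sqrt{t_0})$ preserves the canonical equation \eqref{eq:canonical} with the same constant $\mu$ in \eqref{hyp:original} and sends $W$ to $W\sqrt{t_0}$; and translating the coefficients by $x_0=Y(1;y)$ places the ``center at time $1$'' of the flow starting from the new $y_\text{new}$ at the origin. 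Using the scaling and translation rules for $T,X,Y$ established in Section~2, these reductions preserve the hypothesis $|T(x)-T(y)-Wt|\le r\sqrt{t}$, and the problem becomes: show $P(1,x_\text{new},Y(-1;0))\ge 1/C$ for some $C=C(\mu,r)$ whenever $|T(x_\text{new})|\le O(r)$.

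Then I apply Proposition \ref{prop:entropy} at $x^\ast=0\in(-1,1)$, which gives $H_0(1)\ge -B_1(\mu)$. Jensen's inequality applied to the concave logarithm, with respect to the probability measure $\rho(1,\cdot)\nu/\int\rho(1,\cdot)\nu$, turns this into
$$\int_\R\rho(1,x')\,\nu(x')\,P\bigl(1,x',Y(-1;0)\bigr)\,dx'\ \ge\ c_1(\mu)>0,$$
and combined with the upper bound $P\le\overline C$ from Theorem \ref{thm:heatkernel}, the super-level set $\{x':P(1,x',Y(-1;0))\ge c_2\}$ has positive $\rho\nu$-measure. The Gaussian upper bound further forces its $T$-diameter to be bounded by $\mu$ alone, so it contains a point $x^{\ast\ast}$ with $|\tilde T(x^{\ast\ast})|\le A(\mu)$. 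Applying the Nash-type H\"older continuity of Theorem \ref{thm:Nash} in the spatial variable to $P(1,\cdot,Y(-1;0))$, with exponent and constant depending only on $\mu$, and chaining finitely many short increments of $T$-length $O(1)$, the pointwise bound $P(1,x^{\ast\ast},Y(-1;0))\ge c_2$ transfers to $P(1,x_\text{new},Y(-1;0))\ge c_3(\mu,r)$ at the intended target. Undoing the rescaling and translation then yields $P(t,x,y)\ge 1/(C\sqrt{t})$ in the original variables.

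The main obstacle is keeping every constant along the chain independent of $W$: after parabolic rescaling $W$ is replaced by $W\sqrt{t_0}$, which is unbounded as $t_0\to\infty$, so any residual $W$-dependence would be fatal. This is ensured by Lemma \ref{lem:compfg}: the bound $|T-\tilde T|\le 4\mu^7/W$ precisely cancels the $W$-dependence that would otherwise enter the threshold $A$ in the proof of Proposition \ref{prop:entropy}, so that $B_1$ depends only on $\mu$. The H\"older constants from Theorem \ref{thm:Nash} are also $W$-independent, and the chaining runs over a $T$-distance controlled by $r$ alone; the small-$W$ regime, where Lemma \ref{lem:compfg} degenerates, is handled by direct comparison with the zero-drift case covered by Aronson--Fabes--Stroock, to which the whole setup collapses as $W\to 0$.
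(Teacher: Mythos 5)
Your reduction by scaling and translation matches the paper's, and the $W$-independence discussion is on the right track, but the core of the argument has a genuine gap. The step that fails is the transfer of the pointwise bound from $x^{\ast\ast}$ to the target point via Theorem \ref{thm:Nash} and ``chaining.'' A H\"older modulus of continuity only gives $P(1,x)\ge P(1,x^{\ast\ast})-C|x-x^{\ast\ast}|^{\beta}$, which is useless once $|x-x^{\ast\ast}|$ exceeds $(c_2/C)^{1/\beta}$; and chaining makes it worse, since covering a fixed distance $L$ in $k$ steps of length $\ell$ costs an additive $kC\ell^{\beta}=CL\ell^{\beta-1}\to\infty$ as $\ell\to 0$. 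Chaining propagates \emph{multiplicative} Harnack-type comparisons, not additive oscillation bounds. Moreover, your first use of Jensen converts the entropy estimate $\int\rho\nu\ln P\ge -B$ into $\int\rho\nu P\ge c_1$, which is the \emph{weak} direction: that inequality already follows from conservation of mass plus the Gaussian upper bound and carries none of the force of Proposition \ref{prop:entropy}. The paper instead keeps the average-of-$\ln$ information for \emph{both} the forward kernel and the adjoint kernel $\hat P$, writes $P\bigl(2,X(1;x),Y(-1;y)\bigr)=\int_\R\nu(z)\hat P(-1,z,X(1;x))P(1,z,Y(-1;y))\,dz$, and applies Jensen to the logarithm of this integral of a product; the two entropy bounds then combine into a pointwise lower bound at the matched points $X(1;x)$, $Y(-1;y)$ with no continuity argument needed. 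This overlap step is exactly what your proposal is missing and cannot be replaced by H\"older continuity.

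There is also a circularity: Theorem \ref{thm:Nash} is proved via Lemmas \ref{lem:PxiR} and \ref{lem:osc}, and Lemma \ref{lem:PxiR} explicitly invokes the \emph{lower} bound of Theorem \ref{thm:heatkernel}, which is precisely what Proposition \ref{prop:lowbound} is meant to establish. So the H\"older estimate is not available at this stage of the argument even locally. Finally, the closing remark that the small-$W$ regime ``collapses'' to Aronson--Fabes--Stroock is not a proof; in the paper no case distinction on $W$ is needed because the only $W$-dependent quantity that enters is $W\tau=4\mu^{7}$ (via Lemma \ref{lem:compfg2}), which is uniform in $W$.
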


\begin{proof}[Proof of Proposition \ref{prop:lowbound}]
%First of all, using a translation argument (translating the origin at $y$), one can assume that $z=0$. Next, adapting \cite{FabesStroock}, a scaling argument yields that it is enough to show that there exists a constant $C>0$, which only depends on $m$, $M$, $\inf\nu$, $\sup\nu$, and an upper bound on $W$, such that for all $x,y\in (-R,R)$:
%\begin{equation} \label{eq:scaling} P\big(2,x+X(2;0),y\big)\geq \frac{1 }{C}.\end{equation}
%
%Let us prove this claim. Assume that (\ref{eq:scaling}) is proved with a constant $C>0$ only depending on $m$, $M$, $\inf\nu$, $\sup\nu$, and an upper bound on $W$. Take $\sigma>0$ and let $P_\sigma (t,x,y):= \sigma P(\sigma^2 t,\sigma x, \sigma y)$. Then $P_\sigma$ satisfies
%$$\nu (\sigma x) \partial_{t}P_\sigma - \partial_{x }\big( \nu (\sigma x)a(\sigma x)\partial_{x}P_\sigma \big)+\frac{W}{\sigma}\partial_{x}P_\sigma =0 \hbox{ for all } t\in (0,\infty), x\in \R.$$
%it follows from (\ref{eq:scaling}) that, as the constant $C>0$ only depends on $m$, $M$, $\inf\nu$, $\sup\nu$, and an upper bound on $W$, one has for all $x,y\in (-R,R)$:
%$$P_\sigma \big(2,x+X_\sigma(2;0),y\big)\geq \frac{1 }{C}.$$
%
%
%Consider the adjoint fundamental solution $\hat{P}(t,x,y)$. One has the semi-group property:
%$$P(2, x,y)= \int_{\R}\nu (y) P(1,x,y)P(1, z,y)dz.$$
%Also, one easily checks that $P(t,x,y)=\hat{P}(-t,y,x)$.
First of all, adapting \cite{FabesStroock}, a translation and scaling argument yields that it is enough to show that there exists a constant $C>0$, which only depends on $\mu$, not on $W$, such that for all $x,y\in (-R,R)$:
\begin{equation} \label{eq:scaling} P\big(2,X(1;x),Y(-1;y)\big)\geq \frac{1 }{C}.\end{equation}

Let us prove this claim. Assume that (\ref{eq:scaling}) is proved with a constant $C=C(\mu)>0$. Take $\sigma>0$, $z\in \R$, and let $P_\sigma^z (t,x',y'):= \sigma P\big(\sigma^2 t,\sigma (x'+z), \sigma (y'+z)\big)$. Then $P_\sigma$ satisfies
$$\nu (\sigma (x'+z)) \partial_{t}P_\sigma^z - \partial_{x }\big( \nu (\sigma (x'+z)) a(\sigma (x'+z)) \partial_{x}P^z_\sigma \big)+\sigma W\partial_{x}P^z_\sigma =0 \hbox{ for all } t\in (0,\infty), x\in \R.$$
It follows from (\ref{eq:scaling}) that, as the constant $C>0$ does not depend on $W$, one has for all $x,y\in (-R,R)$:
\begin{equation} \label{eq:Psigma}P_\sigma \big(2,X^z_\sigma (1,x'),Y^z_\sigma (-1,y')\big)\geq C,\end{equation}
where $X^z_\sigma$ is the unique solution of 
$$T^z_\sigma \big( X^z_\sigma (t,x')\big) -\sigma W t = T^z_\sigma (x')$$
and $T^z_\sigma$ is the unique solution $T$ of 
$$-\big( \nu (\sigma (x'+z))a(\sigma (x'+z))T'\big)'+ \sigma W T'=\sigma W \nu (\sigma (x'+z))\hbox{ in } \R, \quad T(-z)=-z$$
such that $x\mapsto T(x')/x'$ is bounded over $\R$ (see Corollary \ref{lem:Tgamma}). Hence, by uniqueness one has $T^z_\sigma (x') = T (\sigma (x'+z))/ \sigma-z$. It follows that $X^z_\sigma (t,x')=X (\sigma^2 t , \sigma (x'+z))/\sigma-z$. Similarly, $Y^z_\sigma (t,x')=Y (\sigma^2 t , \sigma (x'+z))/\sigma-z$.
Hence, we get from (\ref{eq:Psigma}):
\begin{equation} \label{eq:Psigma2}P \big(2\sigma^2,X (\sigma^2  , \sigma (x'+z)),Y(-\sigma^2 t,\sigma (y'+z)\big)\geq C/\sigma\end{equation}
for all $x',y'\in (-R,R)$, $\sigma>0$ and $z\in \R$.

Let $t>0$ and $x,y\in \R$ such that 
$$|T(x)-T(y)-Wt|\leq r \sqrt{t}.$$
Let $u, v\in \R$ such that $x=X(t/2,\sqrt{t/2}u)$ and $y=Y(-t/2,\sqrt{t/2}v)$. By definition of $X$ and $Y$, one has 
$$\begin{array}{rcl} 
m\sqrt{t/2}|u-v|&\leq &  |T(\sqrt{t/2}u)-T(\sqrt{t/2}v)| \\
&&\\
&\leq & |T(\sqrt{t/2}u)-T(\sqrt{t/2}v)|+|T(\sqrt{t/2}v)-\tilde{T}(\sqrt{t/2}v)|  \\
 &&\\
&=&|T(x)-\tilde{T}(y)-Wt|+|T(\sqrt{t/2}v)-\tilde{T}(\sqrt{t/2}v)|    \quad \hbox{ by def. of } X \hbox{ and } Y\\
 &&\\
&\leq&  (r+\sqrt{2}M)\sqrt{t} \quad \hbox{ since } |T'|\leq M \hbox{ and }  |\tilde{T}'|\leq M.\\
\end{array}$$
Hence, $|u-v|\leq  (r+\sqrt{2}M)\sqrt{2}/m$. 

We now take $\sigma =\sqrt{t/2}$, $z=\frac{1}{2}(u+v)$, $x'=\frac{1}{2}(u-v)$, $y'=\frac{1}{2}(v-u)$, and 
$R=(r+\sqrt{2}M)/\sqrt{2}m$. As $|x'|\leq R$ and $|y'|\leq R$, one gets from (\ref{eq:Psigma2}), using the definitions of $u$ and $v$, the result of Proposition \ref{prop:lowbound}. 

\bigskip

Let us now turn back to the proof of (\ref{eq:scaling}). 
Consider the adjoint fundamental solution $\hat{P}(t,x,y)$. One has the semi-group property:
$$P(2, x,y)= \int_{\R}\nu (y) P(1,x,z)P(1, z,y)dz.$$
Also, one easily checks that $P(t,x,z)=\hat{P}(-t,z,x)$.
% It follows from the upper bound of Theorem \ref{thm:princ} that 
%$$P\big(t,x,y\big)\leq \frac{C}{\sqrt{t}}e^{-\frac{|x-X(t,y)|^2}{Ct}}$$

Hence, 
$$P(2,x,y)= \int_{\R}\nu (z) \hat{P}(-1,z,x)P(1, z,y)dz.$$
It follows from the Jensen inequality that 
$$\begin{array}{l}\ln P\big(2,X(1;x),Y(-1;y)\big) \\
\geq  \ln \Big(\int_{\R}\nu (z) \hat{P}(-1,z,X(1;x))P(1, z,Y(-1;y)) e^{-|z|^{2}/\sigma} \Big)dz\\
\geq  \ln Q(1)+ \frac{1}{Q(1)} \int_{\R}\nu (z)e^{-|z|^{2}/\sigma}   \ln\big(\hat{P}(-1,z,X(1;x)) P(1, z,Y(-1;y)) \big) dz\\
= \ln Q(1)+ \frac{1}{Q(1)} \big( G_{y}(1)+\hat{G}_{x}(-1)\big)\\
\end{array}
$$
where
$$\hat{G}_{x}(t):= \int_{\R}\nu (z)e^{-|T(x)-W(1+t)|^{2}/\sigma (2+t)} \ln \big( \hat{P}(t,z,X(1;x))\big)dz.$$
It follows from Proposition \ref{prop:entropy} applied to $G_{y}$ and $\hat{G}_{x}$, that:
$$\ln P\big(2,X(1;x),Y(-1;y)\big) \geq \ln Q(1)+ \frac{1}{Q(1)}\Big( -2B_{R}+2Q(1) \ln \overline{C}\Big).$$
One could easily check from the definition of $Q$ that $Q(1)\geq 1/C$ for some constant $C>0$. The conclusion follows. 
\end{proof}

\begin{proof}[Proof of the lower bound in Theorem \ref{thm:heatkernel}]

We know from Proposition \ref{prop:lowbound} that there exists $C>0$ such that for all $x,y$ such that 
$|T(x)-T(y)-Wt|\leq 3\sqrt{t},$
one has
$P(t,x,y)\geq \frac{1}{C\sqrt{t}}$.

Consider $x,y\in \R$, $t>0$ and let $k\in \mathbb{N}$ such that 
$$|T(x)-T(y)-Wt|\leq \sqrt{kt}.$$
We choose points $x_0=y, x_1,...,x_{k-1}, x_k=x$ such that for all $i=0,...,k-1$:
$$|T(x_{i+1})-T(x_i)-Wt/k|\leq \sqrt{t/k}.$$

 The semi-group property yields for all $t>0$:
 $$ P\big(t,x,y\big)\geq  \int ...\int_{|x'_i-x_i|\leq \frac{1}{M}\sqrt{t/k}}\Pi_{i=0}^{k-1}P\big(t/k, x_{i+1}', x_{i}'\big) dx_{1}'... dx_{k-1}'.$$
 If $|x'_i-x_i|\leq \frac{1}{M}\sqrt{t/k}$, one has 
 $$|T(x_{i+1}')-T(x_i')-Wt/k|\leq |T(x_{i+1})-T(x_i)-Wt/k|+ |T(x_{i+1}')-T(x_{i+1})|+ |T(x_{i}')-T(x_{i})|\leq 3\sqrt{t/k}$$
 since $|T'|\leq M$, and thus $P(t/k,x_{i+1}',x_i')\geq \frac{1}{C\sqrt{t/k}}$.
 
  Hence,
 $$ P\big(t,x,y\big)\geq \Big(\frac{\sqrt{k}}{C\sqrt{t}}\Big)^{k}\Big( \frac{2}{M}\sqrt{t/k}\Big)^{k-1}= \frac{M}{2}(CM/2)^{-k}\sqrt{k/t}\geq e^{-Ck}/C\sqrt{t}$$
 for some alternative constant that we still denote $C$. As $|T(x)-T(y)-Wt|\leq \sqrt{kt}$, we thus conclude that 
 $$P\big(t,x,y\big)\geq \frac{e^{-C|T(x)-T(y)-Wt|^2/t}}{C\sqrt{t}}.$$

\end{proof}

%%%%%%%%%%%%%%%%%%%%%%%%%%%%%%%%%%%%%%

\section{The Nash type estimate}

We now let 
$$T(\xi,R,s):=\{ (t,x)\in \R\times\R, \ |T(x)-T(\xi)-Wt|<R, \ t\geq s\}$$
and we 
consider the fundamental solution $P^{(\xi, R)}=P^{(\xi, R)}(t,s,x,y)$ associated with equation 
\begin{equation} \label{eq:pxi} \left\{\begin{array}{ll}
\nu(x)p_t- \big( a(x)\nu (x)p_x\big)_x- Wp_x=0 & \hbox{ in } T(\xi,R,s),\\ 
p(t,x)=0 &\hbox{ for all } (t,x)\in \partial T(\xi,R,s)\cap (s,\infty),\\
\end{array}\right.
\end{equation}
with initial datum $P(s,s,\cdot,y)=\delta_y/ \nu (y)$. 

\begin{lem}\label{lem:PxiR}
For each $\delta \in (0,1)$, there is an $\alpha=\alpha (\mu,\delta)>0$ such that 
$$P^{(\xi,R)}(t,s,x,y)\geq \frac{\alpha}{C\sqrt{t-s}}e^{-C\frac{|T(x)-T(y)-W(t-s)|^2}{t-s}}$$
for all $(t,x)\in T(\xi,\delta R,s)$, $(s,y)\in T(\xi,\delta R,s)$, and $t\in (s,s+R^2)$, where $C=C(\mu)$ is the same as in Theorem \ref{thm:heatkernel}. 
In particular, if one also has $t-s\geq \gamma R^2$, then $P^{(\xi,R)}(t,s,x,y)\geq \frac{\alpha}{C\sqrt{\gamma}R}e^{-4C\delta^2/\gamma}$. 
\end{lem}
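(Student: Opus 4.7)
The plan is to compare the Dirichlet fundamental solution $P^{(\xi,R)}$ with the unrestricted fundamental solution $P$ (whose Gaussian behavior is given by Theorem~\ref{thm:heatkernel}), and to show that the mass lost through the lateral boundary of the tube is controlled relative to the pointwise lower bound on $P$ throughout the interior tube $T(\xi,\delta R, s)$.

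After translating so that $s=0$, I would introduce $w(t,x):= P(t,x,y) - P^{(\xi,R)}(t,0,x,y)$, which is nonnegative, vanishes initially in the distributional sense (since $y$ lies in the open tube), solves the same parabolic equation as $P^{(\xi,R)}$ on $T(\xi,R,0)$, and satisfies $w(t',x') = P(t',x',y)$ on the lateral boundary $\{|T(x')-T(\xi)-Wt'|=R\}$. Since $(0,y) \in T(\xi, \delta R, 0)$, the triangle inequality gives $|T(x')-T(y)-Wt'| \geq (1-\delta)R$ on the lateral boundary, whence the upper Gaussian estimate of Theorem~\ref{thm:heatkernel} yields
\[
P(t',x',y) \leq \frac{C}{\sqrt{t'}}\,e^{-(1-\delta)^2 R^2/(Ct')}.
\]
The parabolic maximum principle (applicable since the PDE has no zeroth-order term) then dominates $w$ throughout the tube by the supremum of the right-hand side over $t' \in (0,t]$, which a brief optimization in $t'$ shows is of order $1/R$ times an exponentially small factor, with explicit dependence on $\mu$ and $\delta$.

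Next I would combine this with the lower Gaussian bound on $P$ proved in Section~4: since $(t,x),(0,y) \in T(\xi,\delta R,0)$ implies $|T(x)-T(y)-Wt| \leq 2\delta R$, one has
\[
P(t,x,y) \geq \frac{1}{C\sqrt{t}}\,e^{-C|T(x)-T(y)-Wt|^2/t} \geq \frac{1}{C\sqrt{t}}\,e^{-4C\delta^2 R^2/t}.
\]
Subtracting the bound on $w$ and tracking constants yields $P^{(\xi,R)}(t,0,x,y) \geq \frac{\alpha}{C\sqrt{t}}\,e^{-C|T(x)-T(y)-Wt|^2/t}$ for some $\alpha = \alpha(\mu,\delta) > 0$. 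The ``in particular'' statement then follows immediately by inserting $\sqrt{t-s} \leq R$ and $|T(x)-T(y)-W(t-s)|^2/(t-s) \leq 4\delta^2/\gamma$.

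The main obstacle is that the direct maximum principle comparison only closes cleanly when $\delta$ is small enough that the boundary decay rate $(1-\delta)^2/C$ dominates the interior rate $4C\delta^2$; for $\delta$ close to $1$ these exponential rates can be comparable and the bound on $w$ might swamp the lower bound on $P$. To handle arbitrary $\delta \in (0,1)$, following Fabes--Stroock \cite{FabesStroock}, I would perform a chaining argument: first establish the bound on a sub-tube $T(\xi,\delta_0 R, 0)$ for $\delta_0$ small enough that the direct comparison applies, then propagate the lower bound through $T(\xi,\delta R, 0)$ via a finite chain of overlapping sub-tubes, applying the Nash-type Holder estimate of Theorem~\ref{thm:Nash} at each link (or equivalently, a weak parabolic Harnack inequality derived therefrom). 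The number of links in the chain depends on $\delta$, so the resulting constant $\alpha(\mu,\delta)$ degenerates as $\delta \to 1$, consistent with the statement of the lemma.
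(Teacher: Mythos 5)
Your overall strategy is the same as the paper's: the paper also writes $P^{(\xi,R)}$ as $P$ minus a lateral-boundary contribution (via a representation formula with boundary measures of total mass at most one, which is a quantitative form of your maximum-principle comparison), bounds that contribution by $C\sup_{0\le\tau\le t}\tau^{-1/2}e^{-(1-\delta)^2R^2/(C\tau)}$ using the upper Gaussian estimate, subtracts it from the lower Gaussian estimate on $P$, and then chains. One diagnosis in your proposal is off, though harmlessly so: the obstruction to closing the direct subtraction is not only $\delta$ close to $1$. Even for small $\delta$, once $t$ is comparable to $R^2$ the boundary-loss term is of order $C^{3/2}\big((1-\delta)R\big)^{-1}$ while the interior lower bound is only of order $(CR)^{-1}e^{-4C\delta^2}$ with $C=C(\mu)$ large, so the subtraction fails for every $\delta$. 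The direct comparison only yields the claim for $t\le\varepsilon^2R^2$ and $|T(x)-T(y)-Wt|\le\varepsilon R$ with $\varepsilon=\varepsilon(\mu,\delta)$ small — exactly the restricted regime the paper isolates — and the chaining step must then cover both the remaining times and the remaining spatial separations.

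The genuine flaw is in how you propose to chain: you cannot invoke Theorem \ref{thm:Nash} (or a parabolic Harnack inequality derived from it) at each link, because in this paper Theorem \ref{thm:Nash} is deduced from Lemma \ref{lem:osc}, which is itself a consequence of Lemma \ref{lem:PxiR}; that route is circular. The chaining has to be done as in the proof of the lower bound of Theorem \ref{thm:heatkernel}: choose intermediate points $x_0'=y,\dots,x_k'=x$ inside the tube with $|T(x_{i+1}')-T(x_i')-Wt/k|$ of order $\sqrt{t/k}$, and iterate the Chapman--Kolmogorov identity for the Dirichlet kernel $P^{(\xi,R)}$ itself, applying the short-time, short-distance bound from the first step to each factor. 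With that substitution your argument reproduces the paper's proof.
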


\begin{proof}
By translation, we can assume that $\xi=0$ and $s=0$, and we denote $T(R):=T(0,R,0)$. There exist two nonnegative functions on $\R^+$, $mes_y^+$ and $mes_y^-$ with total mass less or equal to $1$, such that 
$$\begin{array}{ll}P^{(0,R)}(t,0,x,y)=&P(t,0,x,y)-\int_0^t P\big(t,r,x,X(R+Wr,0)\big)mes^+_y(r)dr\\
&-\int_0^t P\big(t,r,x,X(-R+Wr,0)\big)mes^-_y(r)dr,\\ \end{array}$$
where we remind to the reader that $z:=X(\pm R+Wr,0)$ is the unique solution of $T(z)=\pm R+Wr$. 
We refer to \cite{HNRR2} for a proof of this claim in the periodic framework, that is indeed still available in the general framework, with 
$$mes_y^+ (r) =\nu \big(X(R+Wr,0)\big) |P_x^{(0,R)}\big(r,0,X(R+Wr,0),y\big)|$$  $$mes_y^- (r) = \nu \big(X(-R+Wr,0)\big)|P_x^{(0,R)}\big(r,0,X(-R+Wr,0),y\big)|.$$ 

Hence, by Theorem \ref{thm:heatkernel}, one has 
$$P^{(0,R)}(t,0,x,y)\geq \frac{1}{C\sqrt{t}} e^{-C|T(x)-T(y)-Wt|^2/t} -C\sup_{0\leq \tau\leq t} \frac{1}{\sqrt{\tau}} e^{-R^2(1-\delta)^2/C\tau}$$
for $(t,x)\in T(\delta R)$. In particular, there is an $\e\in (0,1-\delta)$, depending only on $C$ and $\delta$, such that 
$$P^{(0,R)}(t,0,x,y)\geq \frac{1}{2C\sqrt{t}} e^{-C|T(x)-T(y)-Wt|^2/t} $$
for all $(t,x)\in T( \delta R)$ and $t\in (0,\e^2 R^2]$, and $y$ with $|T(x)-T(y)-Wt|<\e R$.

 We could conclude as in the derivation of the lower bound in the proof of Theorem \ref{thm:heatkernel} that 
 $$P^{(0,R)}(t,0,x,y)\geq \frac{\alpha}{C\sqrt{t}} e^{-C|T(x)-T(y)-Wt|^2/t} $$
 for some $\alpha>0$,
for all $(t,x)\in T(\xi,\delta R,s)$, $(s,y)\in T(\xi,\delta R,s)$, and $t \in (0,R^2)$. This concludes the proof.

\end{proof}

\begin{lem}\label{lem:osc}
For each $\delta>0$, let $\rho:= 1- \e$ where $\e$ is given by Lemma \ref{lem:PxiR}. Then for all $s,\xi\in \R$ and $R>0$:
$$Osc (p;s,\xi,\delta R)\leq \rho Osc (p;s,\xi, R)$$
for any solution $p$ of (\ref{eq:canonical}), where 
$$Osc (p;s,\xi, R):= \sup \{ |p(t,x)-p(t',x')|, (t,x), (t',x')\in T(\xi, R,s)\}.$$
\end{lem}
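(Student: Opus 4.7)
The plan is the classical De Giorgi--Moser oscillation reduction, with Lemma \ref{lem:PxiR} supplying the quantitative Harnack-type input. I would set $M := \sup_{T(\xi,R,s)} p$ and $m := \inf_{T(\xi,R,s)} p$, so that $Osc(p;s,\xi,R) = M-m$, and introduce the nonnegative solutions $u_1 := p-m$ and $u_2 := M-p$ of (\ref{eq:canonical}), which satisfy $u_1+u_2 \equiv M-m$ on $T(\xi,R,s)$. Since $p\mapsto M+m-p$ sends a solution of (\ref{eq:canonical}) to another solution and exchanges $u_1$ and $u_2$, it suffices by symmetry to obtain a pointwise lower bound on $u_1$.

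First I would apply a pigeonhole on the initial slice $\Sigma := \{y : (s,y)\in T(\xi,\delta R,s)\} = \{y : |T(y)-T(\xi)-Ws| < \delta R\}$, whose Lebesgue measure is at least $2\delta R/M$ since $T' \leq M$. Because $u_1(s,\cdot) + u_2(s,\cdot) = M-m$ on $\Sigma$, one of the level sets $F_i := \{y\in\Sigma : u_i(s,y)\geq (M-m)/2\}$ has measure at least $\delta R/M$, and, after the swap $p\mapsto M+m-p$ if needed, we may assume $i=1$. As $u_1\geq 0$ on the parabolic boundary of $T(\xi,R,s)$, the maximum principle (applied to the Dirichlet solution with initial datum $u_1(s,\cdot)$ and zero lateral data) gives, for all $(t,x)\in T(\xi,R,s)$,
$$u_1(t,x)\geq \int_{F_1} P^{(\xi,R)}(t,s,x,y)\,u_1(s,y)\,\nu(y)\,dy \geq \frac{M-m}{2\mu}\int_{F_1} P^{(\xi,R)}(t,s,x,y)\,dy.$$
Restricting to $(t,x)\in T(\xi,\delta R,s)$ with $t-s\in[\gamma R^2,R^2]$ for an appropriate $\gamma=\gamma(\delta)$, the second assertion of Lemma \ref{lem:PxiR} yields $P^{(\xi,R)}(t,s,x,y)\geq \alpha e^{-4C\delta^2/\gamma}/(CR)$ uniformly for $y\in F_1$, and hence $u_1(t,x)\geq \e(M-m)$ for some $\e=\e(\mu,\delta)>0$. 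Therefore $p(t,x)\geq m+\e(M-m)$ on this sub-cylinder, which already gives oscillation at most $(1-\e)(M-m)$ there.

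The remaining step is to extend the bound $u_1\geq \e(M-m)$ to every $(t,x)\in T(\xi,\delta R,s)$. For small times $t-s<\gamma R^2$ one chooses $\gamma$ small enough and compares $u_1(t,x)$ to $u_1(s,\cdot)$ via the same kernel estimate applied to a slightly shifted base time, or uses the parabolic continuity of $p$ at $s$ on the inner slice. For large times $t-s>R^2$ one iterates: once $u_1\geq \e(M-m)$ holds on the full cross-section of the inner cylinder at time $t_1:=s+R^2/2$, re-running the pigeonhole plus kernel estimate with base time $t_1$ propagates the bound to $t\in[t_1+\gamma R^2,t_1+R^2]$, and so on. Assembling all three regimes gives $p\geq m+\e(M-m)$ on all of $T(\xi,\delta R,s)$, hence $Osc(p;s,\xi,\delta R)\leq(1-\e)(M-m)=\rho\, Osc(p;s,\xi,R)$.

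The main obstacle is precisely this last propagation, because $T(\xi,\delta R,s)$ is unbounded in time while a naive iteration of the kernel bound costs a multiplicative factor $<1$ per step of length $R^2$. The clean way around this uses that constants solve (\ref{eq:canonical}): once $u_1$ exceeds $\e(M-m)$ on an entire cross-section at $t_1$, a maximum-principle comparison of $u_1$ with the constant solution $\e(M-m)$ on a slightly smaller sub-cylinder $T(\xi,\delta' R,s)\cap\{t\geq t_1\}$ with $\delta'<\delta$ preserves the lower bound uniformly in $t$, provided one can control $u_1$ on the lateral boundary of the smaller cylinder using the outer non-negativity. Making this comparison quantitative while keeping $\e$ depending only on $(\mu,\delta)$, and matching it with the $\e$ from Lemma \ref{lem:PxiR}, is the delicate step.
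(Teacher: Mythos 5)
Your first two paragraphs are exactly the paper's argument: pigeonhole on an initial slice to find a set of measure comparable to $\delta R$ where $p$ exceeds $(M+m)/2$ (swapping $p$ for $M+m-p$ otherwise), represent $p-m$ through the Dirichlet kernel $P^{(\xi,R)}$ via the maximum principle, and invoke the second assertion of Lemma \ref{lem:PxiR} to get the uniform lower bound $p-m\geq \e(M-m)$ on the inner cylinder. Up to there the proof is complete and correct.

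The difficulty you devote your last two paragraphs to is an artifact of taking the displayed definition of $T(\xi,R,s)$ (with ``$t\geq s$'' and no upper time bound) literally. The intended objects are the usual backward parabolic cylinders of temporal height $O(R^2)$, as is clear from Theorem \ref{thm:Nash} (which measures $\|p\|_{L^\infty((s-R^2,s)\times B(\xi,R))}$) and from the paper's own proof of this lemma, which places the base slice at time $s-R^2$ and restricts the target points to $(t,x)\in T(\xi,\delta R,s-\delta^2R^2)$ with $t\in[s-\delta^2R^2,s]$ — precisely the window $t-(s-R^2)\in[(1-\delta^2)R^2,R^2]$ in which Lemma \ref{lem:PxiR} applies in a single shot. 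So no propagation to large times is needed, and the single application of the kernel bound already covers the whole inner cylinder. Had the cylinder genuinely been unbounded in time, your proposed fixes would not close the gap: iterating the kernel estimate loses a fixed multiplicative factor per time step of length $R^2$, so $\e$ would degenerate, and the comparison with the constant solution $\e(M-m)$ on a smaller cylinder requires a lower bound for $u_1$ on the lateral boundary of that smaller cylinder for all later times, which is exactly what one is trying to prove. Fortunately, neither is required.
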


\begin{proof}
We need to adapt the proof of lemma 5.2 in \cite{FabesStroock}. Define 
$$S:=\Big\{ y\in \R, \ |T(y)-T(\xi)-W(s-R^2)|<\delta R \hbox{ and } p(s-R^2,y)>\frac{M(R)+m(R)}{2}\Big\}.$$
We can assume that 
$$|S|\geq \frac{1}{2} | \{ y\in \R, \ |T(y)-T(\xi)-W(s-R^2)|<\delta R\}|,$$
otherwise we consider $1-p$ instead of $p$.
For all $(t,x)\in T(\xi, \delta R, s-\delta^2 R^2)$, one has 
$$\begin{array}{rcl}
p(t,x)-m(R)& \geq & \int_\R \nu (y) \big( p(s-R^2,y)-m(R)\big) P^{(\xi,R)}(t,s-R^2,x,y)dy\\
&&\\
& \geq & \frac{M(R)-m(R)}{2}\int_S \nu (y)P^{(\xi,R)}(t,s-R^2,x,y)dy\\
&&\\
& \geq & \frac{M(R)-m(R)}{2}\int_S \nu (y) \frac{\alpha}{CR}e^{-4C\delta^2/(1-\delta^2)}dy \quad \hbox{using Lemma \ref{lem:PxiR}}\\
&&\\
& \geq & \frac{M(R)-m(R)}{4} \inf_\R\nu (y) \frac{\alpha}{C}e^{-4C\delta^2/(1-\delta^2)}dy=: \e \big(M(R)-m(R)\big)\\
\end{array}$$
where $\e$ is an arbitrary small constant only depending on $\mu$ and $\delta$. 
The conclusion follows with $\rho:=1-\e$. 
\end{proof}

\begin{proof}[Proof of Theorem \ref{thm:Nash}.]
The first part of the Theorem could be derived as Theorem 5.3 of \cite{FabesStroock} using Lemma \ref{lem:osc}. 

In order to derive (\ref{eq:osc}), we first notice that it follows from Theorem \ref{thm:heatkernel} that 
$$p(t,x)\leq \frac{C}{\sqrt{t}}\int_\R e^{-|T(x)-T(y)-Wt|^2/Ct}p(0,y)dy\leq \frac{C}{\sqrt{t}} \|p(0,\cdot)\|_{L^1 (\R)}.$$
Take  $t=t'=s$, $\xi=X(-t;x)$,$(1-\delta)=1/\sqrt{2}$ and $R=\sqrt{t}$. By definition of $X$, one has $T(x)-T(\xi)-Wt=0$ and 
$|T(x')-T(\xi)-Wt|=|T(x)-T(x')|$. Hence, if $|T(x)-T(x')|\leq \sqrt{t/2}=R1-\delta)$, one gets from the first part of the Theorem: 
$$|p(t,x)-p(t,x')|\leq C \|p\|_{L^\infty ((t/2,t)\times B(\xi,R))}\Big( \frac{|T(x)-T(x')|}{\sqrt{t/2}}\Big)^\beta.$$
If $|T(x)-T(x')|\geq \sqrt{t/2}$, this inequality still holds with $C=2$. Hence, for all $t>0$ and $x,x'\in \R$, one has
$$|p(t,x)-p(t,x')|\leq C \|p\|_{L^\infty ((t/2,t)\times B(\xi,R))}\Big( \frac{|T(x)-T(x')|}{\sqrt{t/2}}\Big)^\beta\leq 
\frac{C}{t^{\frac{1+\beta}{2}}}\|p(0,\cdot)\|_{L^1 (\R)} |x-x'|^\beta
$$
for some generic constant $C$ depending on $\mu$ and $\beta$. 
\end{proof}

%%%%%%%%%%%%%%%%%%%%%%%%%%%%%%%%%%

\section{Proof of the estimates for the original equation}

\subsection{Definitions and properties of the Wronskian and the invariant measure}\label{sec:UP}

We first define the Wronskian, which is known to be constant. 
\begin{lem}\label{lem:Wronskian}
Let $W_{\gamma}:=a \tilde{\phi}_{\gamma}' \phi_{\gamma}- a\phi_{\gamma}' \tilde{\phi}_{\gamma}$. 
Then $W_{\gamma}$ is a positive constant over $\R$. 
\end{lem}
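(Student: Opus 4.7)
The plan is to proceed in three steps: show $W_\gamma$ is constant by a direct differentiation argument, then rule out the case $W_\gamma \equiv 0$ via the variational characterization of $\underline{\gamma}$, and finally pin down the sign using the ratio $\tilde{\phi}_\gamma/\phi_\gamma$.

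First I would differentiate. Since $\phi_\gamma$ and $\tilde{\phi}_\gamma$ both solve $(a\varphi')' + (r-\gamma)\varphi = 0$, I compute
\begin{equation*}
W_\gamma' = (a\tilde{\phi}_\gamma')'\phi_\gamma + a\tilde{\phi}_\gamma'\phi_\gamma' - (a\phi_\gamma')'\tilde{\phi}_\gamma - a\phi_\gamma'\tilde{\phi}_\gamma' = -(r-\gamma)\tilde{\phi}_\gamma\phi_\gamma + (r-\gamma)\phi_\gamma\tilde{\phi}_\gamma = 0
\end{equation*}
almost everywhere. This should be interpreted weakly in view of the regularity of $a$, but the conclusion that $W_\gamma$ is constant on $\R$ follows from standard ODE theory for equations in divergence form with measurable coefficients.

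Next I would show $W_\gamma \neq 0$. Both $\phi_\gamma$ and $\tilde{\phi}_\gamma$ take the value $1$ at $x=0$, so if the Wronskian vanishes at $0$ (and hence everywhere), then $\phi_\gamma \equiv \tilde{\phi}_\gamma$ by uniqueness of the Cauchy problem. But $\tilde{\phi}_\gamma$ decays at $-\infty$ while $\phi_\gamma$ decays at $+\infty$, so this common function $\phi$ would be a positive solution of $(a\phi')' + (r-\gamma)\phi = 0$ that tends to $0$ at both infinities. Standard ODE asymptotics for this kind of equation with bounded coefficients give exponential decay of $\phi$ and $\phi'$ at $\pm\infty$, so $\phi \in H^1(\R)$. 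Multiplying the equation by $\phi$ and integrating yields
\begin{equation*}
\int_\R a(\phi')^2 - \int_\R r\phi^2 = -\gamma \int_\R \phi^2,
\end{equation*}
i.e.\ $\gamma = \frac{\int(r\phi^2 - a(\phi')^2)}{\int \phi^2} \leq \underline{\gamma}$, contradicting $\gamma > \underline{\gamma}$. Hence $W_\gamma \neq 0$.

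Finally I would determine the sign via $v := \tilde{\phi}_\gamma/\phi_\gamma$, which is well-defined and positive since $\phi_\gamma > 0$. A direct computation gives $a \phi_\gamma^2 v' = W_\gamma$, so $v'$ has constant (nonzero) sign. Now $v(0) = 1$, and I claim $v(x) \to 0$ as $x \to -\infty$: the numerator $\tilde{\phi}_\gamma(x)$ tends to $0$, while the denominator $\phi_\gamma$ cannot tend to $0$ at $-\infty$, because otherwise by the same argument as in Step 2 ($\phi_\gamma$ would be $H^1$ and provide a test function forcing $\gamma \leq \underline{\gamma}$). So $v(-\infty) = 0 < 1 = v(0)$, which forces $v$ to be strictly increasing, hence $W_\gamma = a\phi_\gamma^2 v' > 0$.

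The main obstacle is Step 2 (equivalently the auxiliary claim in Step 3): justifying that a positive solution decaying at both infinities gives a legitimate competitor in the Rayleigh quotient for $\underline{\gamma}$. This requires a short ODE argument producing the exponential decay of $\phi$ and $\phi'$ at $\pm\infty$, which can be obtained by writing the equation as a first-order system and invoking the bounds (\ref{hyp}) on $a$ and $r$, or by citing the construction of $\phi_\gamma$ in \cite{Freidlin2, Noleninv}.
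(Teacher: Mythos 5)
The paper does not actually prove this lemma: it is stated as ``known to be constant,'' and the positivity is implicitly available one page later from Lemma \ref{lem:ineqphi}, which gives $\tilde{\phi}_\gamma'/\tilde{\phi}_\gamma-\phi_\gamma'/\phi_\gamma\geq\e>0$ and hence $W_\gamma=a\phi_\gamma\tilde{\phi}_\gamma\big(\tilde{\phi}_\gamma'/\tilde{\phi}_\gamma-\phi_\gamma'/\phi_\gamma\big)>0$ once constancy is known. Your argument is a valid, self-contained alternative. Step 1 is standard. Step 2 is correct: since $\phi_\gamma(0)=\tilde{\phi}_\gamma(0)=1$, vanishing of the Wronskian forces equality of the derivatives at $0$ and hence $\phi_\gamma\equiv\tilde{\phi}_\gamma$, and the Rayleigh-quotient contradiction with $\gamma>\underline{\gamma}$ is the right way to exclude this; the $H^1$ membership you flag does need the exponential decay, but this is exactly what Lemma 2.7 of \cite{Noleninv} (already quoted in the proof of Lemma \ref{lem:ineqphi}) provides, so the dependence is legitimate. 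The only place to tighten is Step 3: ``the denominator does not tend to $0$'' does not by itself give $v(x)\to 0$ as $x\to-\infty$, since a priori $\phi_\gamma$ could have a subsequence tending to $0$. The fix is already in your hands: $v$ is monotone because $a\phi_\gamma^2v'=W_\gamma$ has constant sign, so $v$ has a limit $L\in[0,\infty]$ at $-\infty$; if $L>0$ then $\phi_\gamma\leq\tilde{\phi}_\gamma/c$ near $-\infty$ would force $\phi_\gamma\to 0$ there, whence (by uniqueness of $\tilde{\phi}_\gamma$) $\phi_\gamma\equiv\tilde{\phi}_\gamma$ and $W_\gamma=0$, contradicting Step 2; so $L=0<1=v(0)$ and $v$ is increasing, giving $W_\gamma>0$. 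With that one sentence made explicit, the proof is complete.
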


Then, if $U$ is the fundamental solution associated with (\ref{eq:parab}), easy computations yield that $P(t,x,y)=U(t,x,y)\phi_\gamma (y)/ \phi_\gamma (x) e^{\gamma t} $ is the fundamental solution associated with (\ref{eq:canonical}) with $W=W_\gamma$ and $\nu=\nu_\gamma:= \phi_{\gamma} \tilde{\phi}_{\gamma}$. In order to derive Theorem \ref{thm:princ} from Theorem \ref{thm:heatkernel}, we need to check that $\nu_\gamma$ and $W_\gamma$ satisfy hypothesis (\ref{hyp:original}).

\begin{prop}\label{prop:measure}
For $\gamma > \underline \gamma$, define $\nu_{\gamma}:= \phi_{\gamma} \tilde{\phi}_{\gamma}$. Then 
\begin{itemize}
%\item $\nu_{\gamma}$ is random stationary ergodic,
\item  $\inf_{\R}\nu_{\gamma}>0$,
\item $\nu_{\gamma}$ is bounded. 
\end{itemize}
\end{prop}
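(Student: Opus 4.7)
For both bounds, the key tool is the Wronskian identity of Lemma~\ref{lem:Wronskian}, which can be rewritten as
$$W_\gamma = a(x)\nu_\gamma(x)\left(\frac{\tilde\phi_\gamma'(x)}{\tilde\phi_\gamma(x)} - \frac{\phi_\gamma'(x)}{\phi_\gamma(x)}\right),$$
so that $\nu_\gamma = W_\gamma/(a(\tilde\psi_\gamma - \psi_\gamma))$, with $\psi_\gamma := \phi_\gamma'/\phi_\gamma$ and $\tilde\psi_\gamma := \tilde\phi_\gamma'/\tilde\phi_\gamma$. The plan is to bound $\nu_\gamma$ below by controlling $\tilde\psi_\gamma - \psi_\gamma$ from above, and to bound $\nu_\gamma$ above by identifying it with the diagonal of the Green's function of the elliptic problem and exploiting coercivity on $H^1(\R)$.

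For the lower bound, I would first show that the logarithmic derivatives $\psi_\gamma$ and $\tilde\psi_\gamma$ are uniformly bounded on $\R$. Since $\phi_\gamma > 0$ solves $(a\phi_\gamma')' + (r-\gamma)\phi_\gamma = 0$ with bounded coefficients, the elliptic Harnack inequality yields $\sup_{[x-1,x+1]}\phi_\gamma \leq C(\mu,\gamma)\inf_{[x-1,x+1]}\phi_\gamma$ uniformly in $x$, and combining this with the De Giorgi--Nash--Moser interior $C^{1,\alpha}$ estimates gives $|\psi_\gamma(x)| \leq K = K(\mu,\gamma)$ (and similarly $|\tilde\psi_\gamma(x)| \leq K$). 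Since $W_\gamma > 0$ (Lemma~\ref{lem:Wronskian}) and $a \leq \mu$, the above identity immediately yields $\nu_\gamma(x) \geq W_\gamma/(2K\mu)$ uniformly in $x$.

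For the upper bound, I would use the classical Sturm--Liouville representation of the Green's function $G_\gamma$ of $L_\gamma v := -(av')' + (\gamma-r)v$ on $\R$:
$$G_\gamma(x,y) = \frac{1}{W_\gamma}\phi_\gamma(x \vee y)\tilde\phi_\gamma(x \wedge y),$$
which gives $\nu_\gamma(x) = W_\gamma G_\gamma(x,x)$. The variational definition of $\underline\gamma$ yields $(L_\gamma v,v) \geq (\gamma - \underline\gamma)\|v\|_{L^2}^2$ for all $v \in H^1(\R)$, and combined with $a \geq 1/\mu$ and $\|r\|_\infty \leq \mu$, this upgrades to the full coercivity $\|v\|_{H^1(\R)}^2 \leq C(\mu,\gamma,\underline\gamma)(L_\gamma v,v)$. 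Applying the 1D Sobolev embedding $v(x)^2 \leq C\|v\|_{H^1}^2$ to $v = G_\gamma(x,\cdot)$ and using the identity $(L_\gamma G_\gamma(x,\cdot), G_\gamma(x,\cdot)) = G_\gamma(x,x)$, one obtains $G_\gamma(x,x)^2 \leq C \cdot G_\gamma(x,x)$, so $\nu_\gamma(x) \leq C W_\gamma$ uniformly.

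The main technical obstacle will be to justify that $G_\gamma(x,\cdot) \in H^1(\R)$: one must verify that $\phi_\gamma, \tilde\phi_\gamma$ decay sufficiently fast at the appropriate infinities when $\gamma > \underline\gamma$. This can be established either by invoking the theory of generalized principal eigenvalues (cf.\ \cite{Freidlin2,Noleninv}), which guarantees strict exponential decay on the ``good'' side as soon as $\gamma$ is strictly above the bottom of the spectrum, or by a direct comparison argument that builds exponential super- and sub-solutions using only the bounds (\ref{hyp}).
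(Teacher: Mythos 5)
Your proof of the first bullet ($\inf_\R \nu_\gamma>0$) is essentially the paper's own argument: rewrite the Wronskian as $W_\gamma = a\nu_\gamma\bigl(\tilde\phi_\gamma'/\tilde\phi_\gamma-\phi_\gamma'/\phi_\gamma\bigr)$ and use Harnack/interior gradient estimates to bound the logarithmic derivatives, so that the denominator is bounded above. For the second bullet you take a genuinely different route. The paper bounds $\nu_\gamma$ from above by proving a uniform \emph{strict} gap $\tilde\phi_\gamma'/\tilde\phi_\gamma \geq \phi_\gamma'/\phi_\gamma+\e$ (Lemma \ref{lem:ineqphi}), obtained by comparing $\phi_\gamma$ with $\phi_{\gamma'}$ for $\underline\gamma<\gamma'<\gamma$ via Lemma 2.7 of \cite{Noleninv}, and then reads off $W_\gamma/(a\nu_\gamma)\geq\e$ from the same Wronskian identity. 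You instead identify $\nu_\gamma(x)=W_\gamma G_\gamma(x,x)$ through the Sturm--Liouville Green's function and bound the diagonal by coercivity of the form $v\mapsto\int_\R\bigl(a(v')^2+(\gamma-r)v^2\bigr)$ on $H^1(\R)$ (which follows from the variational definition of $\underline\gamma$ after absorbing a small fraction of the gradient term) together with the 1D Sobolev embedding and the identity $(L_\gamma G_\gamma(x,\cdot),G_\gamma(x,\cdot))=G_\gamma(x,x)$. This is correct, and the one technical point you flag --- that $G_\gamma(x,\cdot)\in H^1(\R)$ --- is genuine but harmless: for fixed $x$ only the decaying branch of each eigenfunction enters $G_\gamma(x,\cdot)$ on the corresponding half-line, and the exponential decay there is exactly what Lemma 2.7 of \cite{Noleninv} (already quoted in the paper) provides; no uniformity in $x$ is needed since the coercivity and Sobolev constants are $x$-independent. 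Your variational argument is arguably more robust (it would survive in higher dimensions, where logarithmic-derivative comparisons are a one-dimensional device), whereas the paper's route is shorter in context because Lemma \ref{lem:ineqphi} is needed elsewhere anyway and directly yields the quantitative lower bound $W_\gamma/(a\nu_\gamma)\geq\e$.
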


%\begin{rmq} One can check that the proof of this Proposition remains true for $\gamma=\underline{\gamma}$ providing 
%$$\inf_{x\in \R} \Big(\frac{\tilde{\phi}_{\underline{\gamma}}'(x)}{\tilde{\phi}_{\underline{\gamma}}(x)} - \frac{\phi_{\underline{\gamma}}'(x)}{\phi_{\underline{\gamma}}(x)} \Big)>0.$$
%\end{rmq}

The proof of this Proposition will rely on the following Lemma. 

\begin{lem}\label{lem:ineqphi}
For all $\gamma>\underline{\gamma}$, there exists $\e>0$ such that 
$$\forall x\in \R, \quad \frac{\tilde{\phi}_{\gamma}'(x)}{\tilde{\phi}_{\gamma}(x)} \geq \frac{\phi_{\gamma}'(x)}{\phi_{\gamma}(x)} + \e.$$
\end{lem}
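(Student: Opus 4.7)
The plan is to reduce the sought lower bound on $v - u := \tilde{\phi}_\gamma'/\tilde{\phi}_\gamma - \phi_\gamma'/\phi_\gamma$ to a uniform upper bound on the ``invariant measure'' $\nu_\gamma := \phi_\gamma \tilde{\phi}_\gamma$. Indeed, the constancy of the Wronskian (Lemma \ref{lem:Wronskian}) gives immediately
$$v(x) - u(x) = \frac{W_\gamma}{a(x)\,\phi_\gamma(x)\,\tilde{\phi}_\gamma(x)} = \frac{W_\gamma}{a(x)\,\nu_\gamma(x)},$$
with $W_\gamma > 0$ and $a \in [1/\mu,\mu]$; so the lemma is equivalent to proving $\sup_\R \nu_\gamma < \infty$ with a bound depending only on $\mu$ and $\gamma - \underline{\gamma}$.

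The key idea for that bound is to exploit the spectral gap $\gamma - \underline{\gamma} > 0$ by gluing the two eigenfunctions. Set $\psi(x) := \phi_\gamma(x)$ for $x \geq 0$ and $\psi(x) := \tilde{\phi}_\gamma(x)$ for $x < 0$. Continuity at the origin holds since $\phi_\gamma(0) = \tilde{\phi}_\gamma(0) = 1$; the function $\psi$ vanishes at $\pm\infty$ hence is bounded, and in the sense of distributions
$$-(a\psi')' - (r - \gamma)\psi = W_\gamma\,\delta_0,$$
the Dirac mass arising from the positive jump $a(0)(\tilde{\phi}_\gamma'(0) - \phi_\gamma'(0)) = W_\gamma$ of $a\psi'$ at the origin. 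For a smooth plateau cutoff $\chi_n$ equal to $1$ on $[-n,n]$, supported in $[-2n,2n]$, with $|\chi_n'| \leq 2/n$, testing against $\chi_n^2\psi$ and integrating by parts yields
$$\int_\R a\bigl((\chi_n\psi)'\bigr)^2 + (\gamma - r)(\chi_n\psi)^2\,dx = W_\gamma + \int_\R a(\chi_n')^2 \psi^2\,dx.$$
Plugging this into the Rayleigh characterization $\int a\varphi'^2 + (\gamma - r)\varphi^2 \geq (\gamma - \underline{\gamma})\int \varphi^2$ applied to $\varphi = \chi_n\psi$, and using $|\chi_n'| \leq 2/n$ together with the boundedness of $\psi$, I pass to the limit $n \to \infty$ to obtain $\|\psi\|_{L^2(\R)}^2 \leq W_\gamma/(\gamma - \underline{\gamma})$. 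In particular, $\phi_\gamma \in L^2([0,\infty))$ and $\tilde{\phi}_\gamma \in L^2((-\infty,0])$.

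To upgrade this $L^2$ estimate to a uniform pointwise bound on $\nu_\gamma$, I combine the Harnack inequality and a Caccioppoli/Moser iteration: the positive solution $\phi_\gamma$ of $(a\phi_\gamma')' = (\gamma - r)\phi_\gamma$ on $[0,\infty)$, being in $L^2$, must decay exponentially $\phi_\gamma(x) \leq Ce^{-cx}$ with the matching two-sided comparison $\phi_\gamma(t) \asymp \phi_\gamma(x) e^{-c(x-t)}$ for $0 \leq t \leq x$, with $c,C$ depending only on $\mu$ and $\gamma - \underline{\gamma}$. Integrating the Wronskian relation $(\tilde{\phi}_\gamma/\phi_\gamma)' = W_\gamma/(a\phi_\gamma^2)$ then yields, for $x \geq 0$,
$$\nu_\gamma(x) = \phi_\gamma(x)^2 + W_\gamma \int_0^x \frac{\phi_\gamma(x)^2}{a(t)\,\phi_\gamma(t)^2}\,dt,$$
and the matching exponential rates keep this uniformly bounded. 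A symmetric argument on $(-\infty,0]$ completes the bound $\sup_\R \nu_\gamma \leq C$, whence $v - u \geq \varepsilon := W_\gamma/(\mu C) > 0$. The main obstacle is this last step: the $L^2$ bound only controls each eigenfunction on its naturally decaying half-line, while the pointwise control of the product $\nu_\gamma$ on the whole real line requires taming the growing factor at exactly the matching exponential rate, an estimate that is implicit in the Wronskian but has to be extracted via a quantitative chained Harnack comparison.
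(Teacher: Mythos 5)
Your route is genuinely different from the paper's. The paper proves the lemma first and only afterwards deduces the boundedness of $\nu_\gamma$ (that is exactly the last line of the proof of Proposition \ref{prop:measure}); its proof of the lemma compares $\phi_\gamma$ with $\phi_{\gamma'}$ for an auxiliary $\gamma'\in(\underline{\gamma},\gamma)$ via Lemma 2.7 of \cite{Noleninv}, establishes the Wronskian monotonicity $\tilde{\phi}_{\gamma}\geq\tilde{\phi}_{\gamma'}$ on $[0,\infty)$, and extracts the inequality at $x=0$ from a first-order expansion of $\ln\tilde{\phi}_\gamma-\ln\phi_\gamma\geq\e x$. You invert the logic: the Wronskian identity $\tilde{\phi}_\gamma'/\tilde{\phi}_\gamma-\phi_\gamma'/\phi_\gamma=W_\gamma/(a\nu_\gamma)$ reduces everything to $\sup_\R\nu_\gamma<\infty$, which you attack directly. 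Your gluing of $\phi_\gamma$ and $\tilde{\phi}_\gamma$ into a single $\psi$ with $-(a\psi')'-(r-\gamma)\psi=W_\gamma\delta_0$, and the resulting spectral-gap bound $\|\psi\|_{L^2(\R)}^2\leq W_\gamma/(\gamma-\underline{\gamma})$, are correct and rather elegant; this is self-contained where the paper leans on an external lemma. There is no circularity, since your bound on $\nu_\gamma$ does not use the lemma.

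The gap is in the upgrade from the $L^2$ bound to the pointwise bound on $\nu_\gamma$, and you have correctly identified its location. Two issues. First, the ``matching two-sided comparison'' $\phi_\gamma(t)\asymp\phi_\gamma(x)e^{-c(x-t)}$ with a single rate $c$ is false in general: the local decay rate of $\phi_\gamma$ is governed by $\sqrt{(\gamma-r)/a}$ and genuinely varies with $x$ when $r$ does, so no single exponential can match from both sides uniformly in $x-t$. Fortunately you only need the one-sided ratio bound $\phi_\gamma(x)\leq C\,\phi_\gamma(t)\,e^{-c(x-t)}$ for $0\leq t\leq x$ to control $\int_0^x \phi_\gamma(x)^2/(a\phi_\gamma(t)^2)\,dt$. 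Second, that one-sided bound does not follow from ``$\phi_\gamma\in L^2$'' plus Harnack alone; it needs a quantitative tail estimate. It can be supplied with the same spectral-gap mechanism you already used: testing against $\phi_\gamma\eta^2$ with $\eta$ a cutoff vanishing on $(-\infty,s]$ gives $(\gamma-\underline{\gamma})\int_{s+1}^\infty\phi_\gamma^2\leq\mu\int_s^{s+1}\phi_\gamma^2$, hence geometric decay of the tails $\int_s^\infty\phi_\gamma^2$, and Harnack chaining converts this into the pointwise ratio bound with constants depending only on $\mu$ and $\gamma-\underline{\gamma}$. With that step written out (and the symmetric one on $(-\infty,0]$), your argument closes.
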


\begin{proof}  Assume first that $x=0$. 
We know from Lemma 2.7 of \cite{Noleninv} that if $\gamma>\gamma'>\underline{\gamma}$, for all 
$0<\e \leq\big( \sqrt{\gamma -\inf_{\R}r}-\sqrt{ \gamma' - \inf_{\R}r}\big)/\sqrt{\inf_\R a}$:
$$\forall x\geq 0, \quad \phi_{\gamma}(x)\leq  \phi_{\gamma'}(x)e^{-\e x}.$$

The same result applies to $\tilde{\phi}_\gamma$ with the change of variable $x\mapsto -x$, yielding: 
$$\forall x\leq 0, \quad \tilde{\phi}_{\gamma}(x)\leq  \tilde{\phi}_{\gamma'}(x)e^{\e x}.$$
Hence, $ \tilde{\phi}_{\gamma}'(0)\geq  \tilde{\phi}_{\gamma'}'(0)+\e.$

Let now prove the following claim
$$\forall x\geq 0, \quad \tilde{\phi}_{\gamma}(x)\geq  \tilde{\phi}_{\gamma'}(x).$$
Define the Wronskian $Z(x)= \tilde{\phi}_{\gamma'}'(x)\tilde{\phi}_\gamma(x)- \tilde{\phi}_{\gamma'}(x)\tilde{\phi}_\gamma'(x).$ 
Then, $$(aZ)'(x)=(\gamma'-\gamma)  \tilde{\phi}_{\gamma'}(x)\tilde{\phi}_\gamma(x)<0.$$ As $Z(0)\leq -\e$, one has $Z(x)\leq 0$ for all $x\geq 0$, and thus $\tilde{\phi}_{\gamma'}'(x)/\tilde{\phi}_{\gamma'}(x)\leq \tilde{\phi}_\gamma'(x)/\tilde{\phi}_\gamma(x)$ for all $x\geq 0$, and the claim follows by integration from $0$ to $x\geq 0$.

Combining these two inequalities, one gets 
$$\forall x\geq 0, \quad \frac{\tilde{\phi}_{\gamma}(x)}{\phi_{\gamma}(x)}\geq  \frac{\tilde{\phi}_{\gamma'}(x)}{\phi_{\gamma'}(x)}e^{\e x}.$$
Moreover, we know that $\tilde{\phi}_{\gamma'}(x) \geq \phi_{\gamma'}(x)$ for all $x\geq 0$. Hence, 
$$\forall x\geq 0, \quad \ln\tilde{\phi}_{\gamma}(x)\geq \ln\phi_{\gamma}(x)+\e x.$$
Taking the first order Taylor development near $x=0^{+}$, one gets 
$$\frac{\tilde{\phi}_{\gamma}'(0)}{\tilde{\phi}_{\gamma}(0)} \geq \frac{\phi_{\gamma}'(0)}{\phi_{\gamma}(0)} + \e.$$
In order to handle the case $x\neq 0$, we just translate the origin and take $\phi_{\gamma}(\cdot)/\phi_{\gamma}(x)$.
\end{proof}

\begin{proof}[Proof of Proposition \ref{prop:measure}.]
We first notice that 
$$\frac{W_{\gamma}}{a\nu_{\gamma}} = \frac{\tilde{\phi}_{\gamma}'}{\tilde{\phi}_{\gamma}}-\frac{\phi_{\gamma}'}{\phi_{\gamma}}.$$
%As $W_{\gamma}$ is constant and $\frac{\phi_{\gamma}'}{\phi_{\gamma}}$ and $\frac{\tilde{\phi}_{\gamma}'}{\tilde{\phi}_{\gamma}}$ are random stationary ergodic functions, so is $\nu_{\gamma}$. 
The Harnack inequality yields that $\frac{\phi_{\gamma}'}{\phi_{\gamma}}$ and $\frac{\tilde{\phi}_{\gamma}'}{\tilde{\phi}_{\gamma}}$ are bounded, so $\nu_{\gamma}$ admits a positive infimum. 

Lastly, Lemma \ref{lem:ineqphi} yields that 
$$\frac{W_{\gamma}}{a\nu_{\gamma}} \geq \e$$
and thus $\nu_{\gamma}$ is bounded. 
\end{proof} 

\subsection{Convexity of $\phi_\gamma$ with respect to $\gamma$}

\begin{lem}\label{lem:phiconvexe}
For all $\gamma, \gamma'>\underline{\gamma}$, one has for all $x\in \R$, $\sigma \in (0,1)$:
$$\frac{\phi_{(1-\sigma)\gamma+\sigma \gamma'}'(x)}{\phi_{(1-\sigma)\gamma+\sigma \gamma'}(x)}\leq (1-\sigma) \frac{\phi_{\gamma}'(x)}{\phi_{\gamma}(x)}+\sigma\frac{\phi_{\gamma'}'(x)}{\phi_{\gamma'(x)}}.$$
\end{lem}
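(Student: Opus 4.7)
The plan is to establish the stronger pointwise statement that for each fixed $x \in \R$, the map $\gamma \mapsto \psi_\gamma(x) := \phi_\gamma'(x)/\phi_\gamma(x)$ is convex on $(\underline\gamma,+\infty)$; this is exactly the claimed inequality. Writing $\dot\phi_\gamma := \partial_\gamma \phi_\gamma$ (which exists by Lemma \ref{lem:wgamma}) and $\dot\psi_\gamma := \partial_\gamma \psi_\gamma$, and differentiating $(a\phi_\gamma')' + (r-\gamma)\phi_\gamma = 0$ once in $\gamma$, one gets $(a\dot\phi_\gamma')' + (r-\gamma)\dot\phi_\gamma = \phi_\gamma$. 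The Wronskian-type quantity $\phi_\gamma\dot\phi_\gamma' - \phi_\gamma'\dot\phi_\gamma$ equals $\phi_\gamma^2\dot\psi_\gamma$, and combining the two equations gives the compact identity
\begin{equation*}
\bigl(a(x)\phi_\gamma(x)^2 \dot\psi_\gamma(x)\bigr)' = \phi_\gamma(x)^2 \quad \hbox{in } \R.
\end{equation*}

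Because $\phi_\gamma$ decays exponentially at $+\infty$ (by the estimate of \cite{Noleninv} recalled in the proof of Lemma \ref{lem:ineqphi}) and $\dot\psi_\gamma$ remains bounded there, the left-hand side vanishes as $x \to +\infty$, and integrating from $x$ to $+\infty$ yields
\begin{equation*}
\dot\psi_\gamma(x) = -\frac{1}{a(x)\phi_\gamma(x)^2}\int_x^{+\infty}\phi_\gamma(y)^2\, dy \leq 0.
\end{equation*}
The key observation will then be that $(\dot\phi_\gamma/\phi_\gamma)'(y) = \dot\phi_\gamma'/\phi_\gamma - \phi_\gamma' \dot\phi_\gamma/\phi_\gamma^2 = \dot\psi_\gamma(y) \leq 0$, so $y \mapsto \dot\phi_\gamma(y)/\phi_\gamma(y)$ is non-increasing on $\R$. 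Differentiating the explicit formula for $\dot\psi_\gamma$ in $\gamma$ once more and rewriting $\phi_\gamma(y)\dot\phi_\gamma(y) = \phi_\gamma(y)^2 \cdot \dot\phi_\gamma(y)/\phi_\gamma(y)$ I expect to arrive at
\begin{equation*}
\ddot\psi_\gamma(x) = \frac{2}{a(x)\phi_\gamma(x)^2}\int_x^{+\infty}\phi_\gamma(y)^2 \left[\frac{\dot\phi_\gamma(x)}{\phi_\gamma(x)} - \frac{\dot\phi_\gamma(y)}{\phi_\gamma(y)}\right]dy,
\end{equation*}
which is nonnegative thanks to the monotonicity just noted, giving the required convexity.

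The main obstacle I foresee is the rigorous justification of the boundary condition $a\phi_\gamma^2 \dot\psi_\gamma \to 0$ at $+\infty$. The exponential decay of $\phi_\gamma$ itself is already in hand, but one still needs growth control on $\dot\psi_\gamma$, equivalently on the Wronskian $\phi_\gamma \dot\phi_\gamma' - \phi_\gamma' \dot\phi_\gamma$, as $x \to +\infty$. This should follow from the characterization of $\dot\phi_\gamma$ in Lemma \ref{lem:wgamma}, which pins down the correct branch among the two-dimensional solution space of the affine equation $(a\dot\phi')' + (r-\gamma)\dot\phi = \phi_\gamma$ near $+\infty$; once this is settled, the remainder of the argument is routine calculation.
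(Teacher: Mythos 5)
Your differential identity $\bigl(a\phi_\gamma^2\dot\psi_\gamma\bigr)'=\phi_\gamma^2$ is correct, and the computation of $\ddot\psi_\gamma$ together with the monotonicity of $y\mapsto\dot\phi_\gamma(y)/\phi_\gamma(y)$ would indeed give convexity \emph{if} all the objects you manipulate were available. The genuine gap is one of circularity and regularity. You take the existence of $\dot\phi_\gamma$ from Lemma \ref{lem:wgamma}; but in the paper that lemma is itself a consequence of Lemma \ref{lem:phiconvexe}: the left derivative $\dot\phi_\gamma$ is \emph{defined} there via the convexity of $\gamma\mapsto\ln\phi_\gamma(x)$, and the sub-exponential bound on $\dot\phi_\gamma/(x\phi_\gamma)$ --- which is exactly what you need to kill the boundary term $a\phi_\gamma^2\dot\psi_\gamma\to 0$ at $+\infty$ and to select the right branch of the affine equation --- is deduced from the Lipschitz continuity in $\gamma$ of $\phi_\gamma'/\phi_\gamma$, itself obtained from the convexity you are trying to prove. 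Moreover your argument requires strictly more than Lemma \ref{lem:wgamma} delivers even granting it: you differentiate $\psi_\gamma$ \emph{twice} in $\gamma$ and differentiate under the integral $\int_x^{+\infty}\phi_\gamma^2$, so you would have to establish $C^2$ dependence on $\gamma$ with uniform exponential decay of $\phi_\gamma$ and $\dot\phi_\gamma$ on $\gamma$-neighbourhoods, none of which is in the paper and none of which is free for merely measurable coefficients.

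For contrast, the paper's proof needs no differentiability in $\gamma$ at all: it invokes the log-convexity inequality $\phi_{(1-\sigma)\gamma+\sigma\gamma'}(x)\le\phi_\gamma(x)^{1-\sigma}\phi_{\gamma'}(x)^{\sigma}$ for $x\ge 0$ (Lemma 2.5 of \cite{Noleninv}), observes that both sides equal $1$ at the normalization point $x=0$, and compares first-order expansions as $x\to 0^+$ to get the inequality between the logarithmic derivatives there; the general $x$ is handled by translating the origin and renormalizing. If you want to keep your route, you would have to first prove smooth dependence of $\phi_\gamma$ on $\gamma$ by an independent argument (e.g.\ via the Riccati equation for $\psi_\gamma$ and its stable branch at $+\infty$), which is a substantially longer detour than the algebraic inequality the paper uses.
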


\begin{proof}
We could always assume that $x=0$ by translation. 
Next,  classical arguments from Lemma 2.5 of \cite{Noleninv} give
$$\phi_{(1-\sigma)\gamma+\sigma \gamma'}(x)\leq \phi_{\gamma}(x)^{1-\sigma}\phi_{\gamma'}(x)^{\sigma} \quad \hbox{ for all } x\in \R.$$
Expending near $x=0^{+}$, one gets 
$$\phi_{(1-\sigma)\gamma+\sigma \gamma'}'(0)\leq (1-\sigma)\phi_{\gamma}'(0)+\sigma\phi_{\gamma'}'(0),$$
which ends the proof when $x=0$. 
\end{proof}

%\begin{lem}\label{lem:phidec} For all $\gamma> \gamma'>\underline{\gamma}$, one has for all $x\leq 0$:
%$$\frac{\phi_{\gamma}'(x)}{\phi_{\gamma}(x)}\leq \frac{\phi_{\gamma'}'(x)}{\phi_{\gamma'}(x)}.$$
%\end{lem}
%
%\begin{proof}
%This  follows from the proof of Lemma \ref{lem:ineqphi} by changing $x$ to $-x$. 
%\end{proof}

%-------------------------------------------------

\subsection{The derivative $\dot{\phi}_\gamma$ and its properties}

\begin{lem}\label{lem:wgamma}
The function $\gamma\mapsto \phi_\gamma$ admits a derivative $\dot{\phi}_{\gamma}$ for all $\gamma>\underline{\gamma}$, which is the unique solution of
\begin{equation} \label{eq:phipoint}\big( a(x)\dot{\phi}_{\gamma}'\big)'+\big(r(x)-\gamma\big)\dot{\phi}_{\gamma}=\phi_{\gamma} \hbox{ in } \R, \quad \dot{\phi}_{\gamma}(0)=0\end{equation}
such that $x\mapsto \displaystyle\frac{\dot{\phi}_{\gamma}}{x\phi_{\gamma}}$ is bounded over $\R$. 
\end{lem}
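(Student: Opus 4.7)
The plan is to construct $\dot\phi_\gamma$ explicitly via variation of parameters using $\{\phi_\gamma,\tilde\phi_\gamma\}$, verify it satisfies the claimed growth bound, establish uniqueness, and then identify it with the $\gamma$-derivative of $\phi_\gamma$ by a difference-quotient/compactness argument. By Lemma \ref{lem:Wronskian} the Wronskian $W_\gamma$ is a positive constant, so variation of parameters yields the one-parameter family of solutions of $(a\psi')'+(r-\gamma)\psi=\phi_\gamma$ with $\psi(0)=0$:
$$\psi_c(x) = \frac{1}{W_\gamma}\Big[\tilde\phi_\gamma(x)\int_0^x\phi_\gamma^2\,dy - \phi_\gamma(x)\int_0^x\phi_\gamma\tilde\phi_\gamma\,dy\Big] + c\,\big(\phi_\gamma-\tilde\phi_\gamma\big)(x).$$
Proposition \ref{prop:measure} provides $\nu_\gamma=\phi_\gamma\tilde\phi_\gamma\in L^\infty(\R)$ with $\inf\nu_\gamma>0$, and integrating the inequality of Lemma \ref{lem:ineqphi} gives $\phi_\gamma(y)/\tilde\phi_\gamma(y)\leq e^{-\e y}$ for $y\geq 0$; hence $\phi_\gamma^2\leq\|\nu_\gamma\|_\infty e^{-\e y}$ is integrable near $+\infty$. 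I fix $c:=\frac{1}{W_\gamma}\int_0^{+\infty}\phi_\gamma^2\,dy$ so that the coefficient of $\tilde\phi_\gamma(x)$ in $\psi_c$ becomes $-\frac{1}{W_\gamma}\int_x^{+\infty}\phi_\gamma^2\,dy$, exponentially small at $+\infty$; denote the resulting function by $\dot\phi_\gamma$.

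For the growth estimate, at $+\infty$ the dominant piece $-\phi_\gamma(x)\int_0^x\nu_\gamma\,dy/W_\gamma$ is of order $x\phi_\gamma(x)$ since $\nu_\gamma$ is bounded above and below, while the $\tilde\phi_\gamma$-piece is $O(\phi_\gamma(x))$ via $\tilde\phi_\gamma(x)\int_x^{+\infty}\phi_\gamma^2\,dy\leq\|\nu_\gamma\|_\infty\phi_\gamma(x)/\e$ (obtained by comparing $\phi_\gamma/\tilde\phi_\gamma$ at $y\geq x$ to its value at $x$). A symmetric argument at $-\infty$ (where $\tilde\phi_\gamma$ decays and $\phi_\gamma$ grows exponentially) yields $|\dot\phi_\gamma(x)|\leq C|x|\phi_\gamma(x)$, while near $0$, $\dot\phi_\gamma(0)=0$ and smoothness give $\dot\phi_\gamma(x)=O(x)$, so $\dot\phi_\gamma/(x\phi_\gamma)$ is bounded on $\R$. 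For uniqueness, any two candidate solutions differ by a homogeneous solution vanishing at $0$, i.e., $\alpha(\phi_\gamma-\tilde\phi_\gamma)$ for some $\alpha$; Lemma \ref{lem:ineqphi} gives $\tilde\phi_\gamma/\phi_\gamma\geq e^{\e x}$ at $+\infty$, which blows up $\alpha(\phi_\gamma-\tilde\phi_\gamma)/(x\phi_\gamma)$ unless $\alpha=0$.

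Finally, to identify $\dot\phi_\gamma$ with $\partial_\gamma\phi_\gamma$: for $\gamma_0>\underline\gamma$ and $\gamma\to\gamma_0$, the difference quotient $w_\gamma:=(\phi_\gamma-\phi_{\gamma_0})/(\gamma-\gamma_0)$ solves $(aw_\gamma')'+(r-\gamma)w_\gamma=\phi_{\gamma_0}$ with $w_\gamma(0)=0$. Local elliptic estimates make $\{w_\gamma\}$ relatively compact in $C^1_{\mathrm{loc}}(\R)$, and any subsequential limit satisfies (\ref{eq:phipoint}). Continuity of $\gamma\mapsto(\phi_\gamma,\tilde\phi_\gamma,W_\gamma)$ --- inherited from the convexity Lemma \ref{lem:phiconvexe} and classical continuous dependence of the linear ODE on the shooting datum $\phi_\gamma'(0)$ --- combined with uniform-in-$\gamma$ versions of Proposition \ref{prop:measure} and Lemma \ref{lem:ineqphi} on a neighborhood of $\gamma_0$, lets the growth bound pass to the limit; uniqueness then forces the entire family $\{w_\gamma\}$ to converge to $\dot\phi_{\gamma_0}$. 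The main obstacle is precisely this propagation of the $O(|x|\phi_\gamma(x))$ bound uniformly in $\gamma$, which requires the constants in Proposition \ref{prop:measure} and Lemma \ref{lem:ineqphi} to be locally uniform in $\gamma$.
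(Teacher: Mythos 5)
Your construction of $\dot\phi_\gamma$ by variation of parameters, the resulting growth estimate, and the uniqueness argument are correct, and they follow a genuinely different route from the paper's: the paper never writes an integral representation, but instead \emph{defines} $\dot\phi_\gamma(x)$ directly as the (left) derivative of $\gamma\mapsto\phi_\gamma(x)$, which exists pointwise by convexity of $\gamma\mapsto\ln\phi_\gamma(x)$ (Lemma \ref{lem:phiconvexe}), and obtains the bound on $\dot\phi_\gamma/(x\phi_\gamma)$ from the convexity of $\gamma\mapsto\phi_\gamma'(x)/\phi_\gamma(x)$ together with the uniform estimate $|\phi_\gamma'/\phi_\gamma|\le\sqrt{(\gamma-\inf_\R r)/\inf_\R a}$, which makes $\phi_\gamma'/\phi_\gamma$ locally Lipschitz in $\gamma$ uniformly in $x$ and hence gives $|(\dot\phi_\gamma/\phi_\gamma)'|\le C$, so $|\dot\phi_\gamma/\phi_\gamma|\le C|x|$. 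Your explicit formula is arguably more informative for the existence and uniqueness parts.

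However, the final step --- identifying the function you constructed with $\partial_\gamma\phi_\gamma$, which is part of the statement --- contains a genuine gap that you flag but do not close. Compactness in $C^1_{loc}$ of the difference quotients $w_\gamma=(\phi_\gamma-\phi_{\gamma_0})/(\gamma-\gamma_0)$ only shows that a subsequential limit $w$ solves \eq{phipoint} with $w(0)=0$; it gives no control of $w$ at infinity, so a priori $w=\dot\phi_{\gamma_0}+A(\phi_{\gamma_0}-\tilde\phi_{\gamma_0})$ with $A\neq0$, and uniqueness cannot be invoked. Making the constants of Proposition \ref{prop:measure} and Lemma \ref{lem:ineqphi} locally uniform in $\gamma$ does not by itself repair this, since those results control $\phi_\gamma$, $\tilde\phi_\gamma$ and $\nu_\gamma$, not the difference quotients. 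What is actually needed is an estimate of the form $|w_\gamma(x)|\le C|x|\,\phi_{\gamma_0}(x)$ for each fixed $x$, uniformly as $\gamma\to\gamma_0$, and the natural source for it is precisely the paper's convexity mechanism: the monotonicity in $\gamma$ of the difference quotients of $\ln\phi_\gamma(x)$, combined with $|\ln\phi_\gamma(x)-\ln\phi_{\gamma_0}(x)|\le C|x|\,|\gamma-\gamma_0|$ (which follows from the uniform bound on $\phi_\gamma'/\phi_\gamma$), passes to the limit and forces $|w(x)|\le C|x|\phi_{\gamma_0}(x)$, after which your uniqueness step applies. Until this estimate is supplied, the claim that your $\dot\phi_\gamma$ is the derivative of $\gamma\mapsto\phi_\gamma$ is not established.
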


\begin{proof}
First, the convexity of $\gamma\mapsto \ln \phi_{\gamma}(x)$ mentioned in the proof Lemma \ref{lem:phiconvexe} yields that one can always define a left derivative 
$$\dot{\phi}_{\gamma}(x):=\phi_{\gamma}(x)\times\lim_{\gamma'\to \gamma^{-}}\displaystyle\frac{\ln\phi_{\gamma}(x)-\ln\phi_{\gamma'}(x)}{\gamma-\gamma'}.$$
%We use again Lemma 2.7 of \cite{NolenCLT} applied to $\tilde{\phi}_{\gamma}(\cdot)$: for all $\gamma>\gamma'>\underline{\gamma}$, one has 
%$$\forall x\geq 0, \quad \phi_{\gamma}(x)\leq  e^{( \sqrt{\gamma -\inf_{\R}r}-\sqrt{ \gamma' - \inf_{\R}r}) x}\phi_{\gamma'}(x).$$
%As both sides are equal when $\gamma'=\gamma$, one could take the right derivative with respect to $\gamma$ and find:
%$$\forall x\geq 0, \quad \frac{\dot{\phi}_{\gamma}(x)}{\phi_{\gamma}(x)}\leq  \frac{x}{2\sqrt{\gamma -\inf_{\R}r}}.$$
%Hence, for all $x<0$ and $y\in \R$:
%$$\frac{\dot{\tilde{\phi}_{\gamma}}(x+y)}{\tilde{\phi}_{\gamma}(x+y)}-\frac{\dot{\tilde{\phi}_{\gamma}}(y)}{\tilde{\phi}_{\gamma}(y)}\leq  \frac{x}{2\sqrt{\gamma -\inf_{\R}r}}.$$
%This yields
%$$\forall y\in \R, \quad  \frac{d}{dx}\frac{\dot{\tilde{\phi}_{\gamma}}}{\tilde{\phi}_{\gamma}}(y) \geq  \frac{1}{2\sqrt{\gamma -\inf_{\R}r}}>0.$$

On the other hand, we know from Lemma \ref{lem:phiconvexe} that $\gamma \mapsto \phi_{\gamma}'(x)/\phi_{\gamma}(x)$ is convex for all $x\in \R$. One could easily check that 
$$|\phi_{\gamma}'(x)/\phi_{\gamma}(x)|\leq \sqrt{\displaystyle\frac{\gamma-\inf_{\R}r}{\inf_\R a}}.$$
Hence, it is a well-known property of convex functions that $\phi_{\gamma}'(x)/\phi_{\gamma}(x)$ is $\frac{\sqrt{\gamma+\delta-\inf_{\R}r}}{\delta\sqrt{\inf_\R a}}$-Lipschitz-continuous with respect to $\gamma$ on any ball of radius $\delta$. By exchanging the derivatives with respect to $\gamma$ and $x$, we get that  $x\mapsto \displaystyle\frac{\dot{\phi}_{\gamma}}{x\phi_{\gamma}}$ is bounded over $\R$. 

Lastly, if $\psi$ is another solution of (\ref{eq:phipoint}) such that $x\mapsto \displaystyle\frac{\psi}{x\phi_{\gamma}}$ is bounded over $\R$, then $z:=\dot{\phi}_{\gamma}-\psi$ would satisfy
$$\big(a(x)z'\big)'+\big( r(x)-\gamma\big)z=0 \hbox{ over } \R, \quad z(0)=0.$$
We could thus write it $z=A\phi_{\gamma}+B\tilde{\phi}_{\gamma}$ since these functions are two independent solutions of the equation. Moreover, $z(0)=A+B=0$ and thus $B=-A$. Dividing by $x\phi_{\gamma}(x)$, as $\tilde{\phi}_{\gamma}/\phi_{\gamma}$ converges at least exponentially to $+\infty$ as $x\to +\infty$ by Lemma \ref{lem:ineqphi}, one gets a contradiction unless $A=0$, which means that $z\equiv 0$. Hence $\dot{\phi}_{\gamma}$ is uniquely defined. 
\end{proof}

\begin{cor}\label{lem:Tgamma}
The function $T_\gamma:=-W_\gamma \dot{\phi}_{\gamma}/\phi_\gamma$ is the unique solution of
$$-\big( \nu (x)a(x)T_{\gamma}'\big)'+ W_\gamma T_{\gamma}'=W_\gamma \nu (x) \hbox{ in } \R, \quad T_{\gamma}(0)=0$$
such that $x\mapsto \displaystyle\frac{T_{\gamma}}{x}$ is bounded over $\R$. 
\end{cor}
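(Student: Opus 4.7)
The plan is to verify by direct computation that $T_\gamma := -W_\gamma \dot{\phi}_\gamma/\phi_\gamma$ satisfies the three stated properties (the ODE, the normalization at $0$, and the growth bound), and then to invoke the uniqueness statement of Proposition \ref{prop:T} with the coefficients $(\nu_\gamma, a)$ and drift $W_\gamma$. The ellipticity hypothesis (\ref{hyp:original}) required by Proposition \ref{prop:T} is supplied by Proposition \ref{prop:measure}, and the normalization $T_\gamma(0) = 0$ and the boundedness of $x \mapsto T_\gamma(x)/x$ are immediate from Lemma \ref{lem:wgamma} (which gives $\dot{\phi}_\gamma(0)=0$ and $\dot{\phi}_\gamma/(x\phi_\gamma)$ bounded).

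The main step is to establish the ODE. I would rewrite the defining relation as $\dot{\phi}_\gamma = -\phi_\gamma T_\gamma/W_\gamma$ and substitute it into the equation (\ref{eq:phipoint}) satisfied by $\dot{\phi}_\gamma$. Expanding the second derivative by the product rule,
$$(a \phi_\gamma' T_\gamma)' + (a \phi_\gamma T_\gamma')' + (r-\gamma)\phi_\gamma T_\gamma = -W_\gamma \phi_\gamma,$$
and using $(a\phi_\gamma')' = -(r-\gamma)\phi_\gamma$ from (\ref{eq:phigamma}) to cancel the two terms in $(r-\gamma)\phi_\gamma T_\gamma$, I reduce this to
$$a\phi_\gamma' T_\gamma' + (a\phi_\gamma T_\gamma')' = -W_\gamma \phi_\gamma.$$
Multiplying through by $\tilde{\phi}_\gamma$ and writing $\tilde{\phi}_\gamma(a\phi_\gamma T_\gamma')' = (a\nu_\gamma T_\gamma')' - a\tilde{\phi}_\gamma' \phi_\gamma T_\gamma'$ (with $\nu_\gamma = \phi_\gamma\tilde{\phi}_\gamma$), the identity becomes
$$(a\nu_\gamma T_\gamma')' - a\bigl(\tilde{\phi}_\gamma'\phi_\gamma - \phi_\gamma'\tilde{\phi}_\gamma\bigr) T_\gamma' = -W_\gamma \nu_\gamma.$$
Lemma \ref{lem:Wronskian} identifies the bracketed Wronskian as the constant $W_\gamma$, yielding exactly the equation claimed for $T_\gamma$.

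Uniqueness then comes for free: the ODE together with the normalization $T_\gamma(0)=0$ and the condition that $T_\gamma(x)/x$ be bounded fall under the scope of Proposition \ref{prop:T}, once one notes, thanks to Proposition \ref{prop:measure}, that $\nu_\gamma$ and $a$ satisfy (\ref{hyp:original}) with some $\mu_\gamma$. No genuine obstacle is expected here; the only delicate point is the bookkeeping of signs and the product-rule expansions in the derivation of the ODE, but no new analytical input is needed beyond what has already been assembled in Lemmas \ref{lem:Wronskian} and \ref{lem:wgamma} and Proposition \ref{prop:measure}.
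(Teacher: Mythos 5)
Your proposal is correct and follows exactly the route the paper takes: the paper's proof of this corollary is the one-line remark that existence follows from Lemma \ref{lem:wgamma} and ``easy computations'' while uniqueness follows from Proposition \ref{prop:T}, and your substitution of $\dot{\phi}_\gamma=-\phi_\gamma T_\gamma/W_\gamma$ into \eqref{eq:phipoint}, cancellation via \eqref{eq:phigamma}, multiplication by $\tilde{\phi}_\gamma$, and identification of the Wronskian from Lemma \ref{lem:Wronskian} is precisely the computation being alluded to. The appeal to Proposition \ref{prop:measure} to verify \eqref{hyp:original} before invoking the uniqueness of Proposition \ref{prop:T} is also the intended argument.
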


\begin{proof} The existence follows from Lemma \ref{lem:wgamma} and easy computations. The uniqueness follows from Proposition \ref{prop:T}.\end{proof}

%----------------------------------------------------------------------

\subsection{Proof of Theorem \ref{thm:princ}} 

\begin{proof}[Proof of Theorem \ref{thm:princ}.]

Let $U$  be   the fundamental solution associated with (\ref{eq:parab}), easy computations yield that $U(t,x,y)=P(t,x,y)\phi_\gamma (y)/ \phi_\gamma (x) e^{\gamma t} $ is the fundamental solution associated with (\ref{eq:canonical}) with $W=W_\gamma$ and $\nu=\nu_\gamma:= \phi_{\gamma} \tilde{\phi}_{\gamma}$. Proposition \ref{prop:measure} yields $\nu_\gamma$ satisfies hypotheses \ref{hyp:original}. Corollary \ref{lem:Tgamma} yields that  $T_\gamma:=-W_\gamma \dot{\phi}_{\gamma}/\phi_\gamma$. The conclusion follows. 

\end{proof}

%%%%%%%%%%%%%%%%%%%%%

\section{Conclusion}

We conclude this paper by explaining what are the difficulties when trying to extend the results to multi-dimensional media. Let first consider the equation
\begin{equation} \label{eq:N}\left\{\begin{array}{ll}
u_{t}-\nabla \cdot \big(A(x)\nabla u\big)= r(x)u \quad& \hbox{ for all } t>0, \ x\in \R^N,\\
u(0,x)=u_{0}(x) \quad &\hbox{ for all } x\in \R^N,\\
\end{array}\right. \end{equation}
where $A(x)$ is a symmetric $N\times N$ matrix for a.e. $x\in \R^N$ and 
\begin{equation}\label{hyp}
\exists \mu>0, \ |r(x)|\leq \mu, \ \frac{1}{\mu}I_N\leq A(x)\leq \mu I_N  \hbox{ in the sense of symmetric matrix for a.e. } x\in \R. 
\end{equation}

It is known (see \cite{BR}) that for all $\gamma\geq \underline{\gamma}$, there exists a positive solution $\phi_\gamma$ of 
\begin{equation} \label{eq:phid} \nabla \cdot \big(A(x)\nabla \phi_\gamma \big)+ r(x)\phi_\gamma = \gamma \phi_\gamma \quad \hbox{ in } \R^N.\end{equation}
But it is not clear what is the behaviour of $\phi_\gamma$ near $|x|\to +\infty$ and what would be the equivalent of (\ref{eq:phigamma}). Only in some specific non-periodic situations, one could construct some $\phi_\gamma$ having a good prescribed behaviour near $+\infty$ (see for example \cite{corrector} that addresses the random stationary ergodic framework). 
Next, consider two positive solutions $\phi_\gamma$ and $\tilde{\phi}_\gamma$ of (\ref{eq:phid}),  and let us try to introduce some invariant measure $\nu_\gamma:=\phi_\gamma \tilde{\phi}_\gamma$ as in Proposition \ref{prop:measure} but, unless we have some good properties for $\phi_\gamma$, one cannot prove that this invariant measure is bounded. Hence, our one-dimensional proof does not work anymore. 

Next, consider the canonical equation 
\begin{equation} \label{eq:canonical}\left\{ \begin{array}{ll}
\nu (x) \partial_{t}p - \nabla \cdot \Big( \nu (x)\big(A(x)\big) \nabla p\big)+W(x)\cdot \nabla p=0 &\hbox{ for all } t\in (0,\infty), x\in \R^N,\\
p(0,x) = p_0(x) &\hbox{ for all } x\in \R^N,\\
\end{array}\right. \end{equation} 
where $A=A(x)$ is a symmetric $N\times N$ matrix for a.e. $x\in \R^N$, $\frac{1}{\mu}\leq \nu(x)\leq \mu$ and $W=W(x)$ is a divergence-free vector field in $\R^N$.

We could then construct a corrector $T: \R^N \to \R^N$ solution of 
$$- \nabla \cdot \Big( \nu (x)\big(A(x)\big) \nabla T\big)+\big(W(x)\cdot \nabla \big) T= \nu (x) W \quad \hbox{ in } \R^N$$
and an adjoint solution $\tilde{T}$. Most of the computations in the proof could be performed, however, we need two estimates that we did not manage to derive and that could possibly be false in general.
\begin{itemize}
\item In order to derive the upper bound, we need $|\nabla T|\leq M$ for some $M>0$ independent of $W$.
\item In order to derive the lower bound, we need $\mathrm{det} \nabla T\geq m$ for some $m>0$ independent of $W$.
\end{itemize}
Without these estimates, we are stuck and we cannot even derive weaker bounds. 

These are the reasons why we believe the multi-dimensional problem is much more difficult to address.

\end{document}